\g@addto@macro\normalsize{%
	\setlength\abovedisplayskip{4pt}
	\setlength\belowdisplayskip{4pt}
	\setlength\abovedisplayshortskip{4pt}
	\setlength\belowdisplayshortskip{4pt}
}
\numberwithin{equation}{section}
\crefname{section}{Section}{Sections}
\crefname{subsection}{Subsection}{Subsections}
\crefname{condition}{Condition}{Conditions}
\crefname{hypothesis}{Hypothesis}{Conditions}
\crefname{assumption}{Assumption}{Assumptions}
\crefname{lemma}{Lemma}{Lemmas}
\crefname{definition}{Definition}{Definitions}
\numberwithin{equation}{section}
\newtheorem{theorem} {Theorem}[section]
\newtheorem{proposition}[theorem]{Proposition}
\newtheorem{lemma}[theorem]{Lemma}
\newtheorem{example}[theorem]{Example}
\newtheorem{counter example}[theorem]{Counter Example}
\newtheorem{remark}[theorem] {Remark}
\newtheorem{definition}[theorem] {Definition}
\newtheorem{claim}[theorem] {Claim}
\def\N{\mathbb{N}}
\def\CC{{\rm \kern.24em \vrule width.02em height1.4ex depth-.05ex \kern-.26emC}}
\def\TagOnRight
\def\AA{{it I} \hskip-3pt{\tt A}}
\def\QQ{\rlap {\raise 0.4ex \hbox{$\scriptscriptstyle |$}} {\hskip -0.1em Q}}
\newcommand{\vo}{\vec{o}\@ifnextchar{^}{\,}{}}
\def\YYint#1#2#3{{\setbox0=\hbox{$#1{#2#3}{\iint}$}
		\vcenter{\hbox{$#2#3$}}\kern-.50\wd0}}
\def\XXint#1#2#3{{\setbox0=\hbox{$#1{#2#3}{\int}$}
		\vcenter{\hbox{$#2#3$}}\kern-.50\wd0}}
\def\namedlabel#1#2{\begingroup
	\def\@currentlabel{#2}%
	\label{#1}\endgroup
}
\newcommand{\rmh}[1]{\mathpalette{\raisem@th{#1}}}
\newcommand{\raisem@th}[3]{\hspace*{-1pt}\raisebox{#1}{$#2#3$}}
\newcounter{desccount}
\newcommand{\descitem}[2]{\item[#1]\refstepcounter{desccount}\label{#2}}
\newcommand{\descref}[2]{\hyperref[#1]{\textnormal{\textcolor{black}{}\textcolor{blue}{ #2}\textcolor{black}{}}}}
\newcommand{\dref}[2]{\hyperref[#1]{\textcolor{black}{(}\textcolor{blue}{\bf #2}\textcolor{black}{)}}}
\newcommand{\be} {\begin{eqnarray}}
	\newcommand{\ee} {\end{eqnarray}}
\newcommand{\Bea} {\begin{eqnarray*}}
	\newcommand{\Eea} {\end{eqnarray*}}
\newcommand{\pa} {\partial}
\newcommand{\al} {\alpha}
\newcommand{\rr}{\rightarrow}
\newcommand{\B} {\beta}
\newcommand{\de} {\delta}
\newcommand{\p}  {\prime}
\newcommand{\e}  {\epsilon}
\newcommand{\la} {\lambda}
\newcommand{\si} {\sigma}
\newcommand{\La} {\Lambda}
\newcommand{\f}{\infty}
\newcommand{\R}{\mathbb{R}}
\newcommand{\noi} {\noindent}
\newcommand{\ga}{\gamma}
\DeclareMathOperator{\dv}{div}
\newcommand{\norm}[1]{\left|\hspace{-0.2mm}\left| #1 \right|\hspace{-0.2mm}\right|}
\newcommand{\abs}[1]{\left| #1\right|}
\newcounter{whitney}
\newcounter{ineqcounter}
\def\ps@pprintTitle{%
	\let\@oddhead\@empty
	\let\@evenhead\@empty
	\def\@oddfoot{}%
	\let\@evenfoot\@oddfoot}
\newcommand{\refcheckize}[1]{%
	\expandafter\let\csname @@\string#1\endcsname#1%
	\expandafter\DeclareRobustCommand\csname relax\string#1\endcsname[1]{%
		\csname @@\string#1\endcsname{##1}\wrtusdrf{##1}}%
	\expandafter\let\expandafter#1\csname relax\string#1\endcsname
}
\newcommand{\mainsectionstyle}{%
	\renewcommand{\@secnumfont}{\bfseries}
	\renewcommand\section{\@startsection{section}{2}%
		\z@{.5\linespacing\@plus.7\linespacing}{-.5em}%
		{\normalfont\bfseries}}%
}
\xpatchcmd{\MaketitleBox}{\hrule}{}{}{}
\xpatchcmd{\MaketitleBox}{\hrule}{}{}{}
\date{}
\let\oldbibliography\thebibliography
\renewcommand{\thebibliography}[1]{\oldbibliography{#1}
	\setlength{\itemsep}{0pt}}
\newcommand{\U}{\textbf{u}}
\newcommand{\V}{\textbf{v}}
\newcommand{\W}{\textbf{w}}
\begin{document}
	\begin{frontmatter}
		
        \title{On blow up of $C^1$ solutions of isentropic Euler system}
		\author[myaddress-1]{Shyam Sundar Ghoshal}\ead{ghoshal@tifrbng.res.in}
		\author[myaddress-2]{Animesh Jana}\ead{animesh.jana@dauphine.psl.eu}
		\address[myaddress-1]{Centre for Applicable Mathematics, Tata Institute of Fundamental Research, Post Bag No 6503, Sharadanagar, Bangalore - 560065, India.}
		\address[myaddress-2]{CEREMADE (CNRS UMR no. 7534), Universit\'e Paris Dauphine, PSL Research University, Place du Mar\' echal De Lattre De Tassigny, Paris CEDEX 16, 75775 France.}
		
		\begin{abstract}
		  In this article, we study the break-down of smooth and continuous solutions to isentropic Euler system in multi dimension. Sideris [Comm. Math. Phys. 1985]  proved the blow up of smooth solutions when initial data satisfies an `integral condition'. We show that a $C^1$ solution of isentropic Euler equation breaks down if (i) gradient of initial velocity has a negative real eigenvalue at some point $x_0\in\R^d$ and (ii) Hessian of initial density satisfies a smallness condition in Sobolev space. Our proof also works for the data which fails to satisfy the above-mentioned `integral condition'. Furthermore, we prove the global existence of smooth solution when (i) eigenvalues of gradient of initial velocity have non-negative real-part and (ii) initial density satisfies a smallness condition. This extends the global existence result of [Grassin, Indiana Univ. Math. J. 1998]. Another goal of this article is to study the breakdown of continuous weak solutions of isentropic Euler equations. We are able to show that the `integral condition' of Sideris can cause the breakdown of continuous solutions in finite time. This improves the blow up result of Sideris from $C^1$ to continuous space. 
		  
		\end{abstract}

%
%

\begin{keyword}
	 Euler equations \sep blow up \sep Sobolev space \sep Energy inequality 
	 \MSC 35L65 \sep 35Q31 \sep 76N10
\end{keyword}

\end{frontmatter}	
	\tableofcontents

\section{Introduction}
In this article, we are interested in regularity and breakdown of smooth solution to the isentropic Euler equations which reads as follows,
\begin{align}
	\pa_t\varrho+\dv_x(\varrho\U)&=0\mbox{ for }x\in \R^d,t>0,\label{eqn-isen-1}\\
	\pa_t(\varrho\U)+\dv_x(\varrho\U\otimes\U)+\nabla_xp(\varrho)&=0\mbox{ for }x\in \R^d,t>0,\label{eqn-isen-2}
\end{align}
where $\varrho,\U$ denote the density and velocity of a compressible fluid and the function $p:[0,\f)\rr[0,\f)$ represents the pressure. Here we consider $p(\varrho)=\varrho^\gamma$ for $\gamma>1$.

Despite of the importance of PDE \eqref{eqn-isen-1}--\eqref{eqn-isen-2} in the physical world, very little is known about well-posedness. For sufficiently smooth initial data, the local-in-time existence of smooth solutions is known due to \cite{Chemin,Kato,KMU}. In one dimension, the $C^1$--norm of solution blows up in finite time for compact support velocity (see \cite{John,Lax-jmp,Liu,KMU}). In his seminal work \cite{Sideris-cmp}, Sideris proposed a sufficient condition for multi-dimension to show that the smooth solution breaks down in finite time. See also \cite{BuckShkVic,Christodoulou,Christodoulou-ems,JWX,Lai-Schi,Majda-existence,Sideris-hyp} for results on blow up and lifespan of smooth solution to isentropic Euler system and related hyperbolic system of conservation laws. In this article, we first prove that for a certain restriction on initial data, the continuity of a solution breaks down in finite time if it also satisfies the entropy inequality in the sense of distribution,
\begin{equation}\label{ineq:entropy}
	\pa_t\left[\frac{1}{2}\varrho\abs{\U}^2+P(\varrho)\right]+\dv_x\left[\left(\frac{1}{2}\varrho\abs{\U}^2+P(\varrho)+p(\varrho)\right)\U\right]\leq0,\mbox{ where }P(\varrho)=\varrho\int\limits_{0}^{\varrho}\frac{p(r)}{r^2}dr.
\end{equation}
 We note that the initial data for which the result of Sideris \cite{Sideris-cmp} is valid also satisfies the assumption of our result. Hence, for the initial data considered in \cite{Sideris-cmp}, the corresponding solution not only loses its $C^1$ regularity in finite time, it also becomes discontinuous.
 
 Recently, there has been a growing interest in existence of continuous solutions for fluid flow equations (see \cite{DK,GK,Isett}). For isentropic Euler equation, it has been shown \cite{GK} that there are infinitely many H\"older continuous weak solutions from a specific initial data. It is then natural to ask whether these solutions are globally defined or not. We show that if a continuous solution satisfies entropy inequality \eqref{ineq:entropy} and an `integral condition' (as in \eqref{condition-Sideris} below), then their continuity breaks down in finite time.

On the other hand, global smooth solution exists \cite{Lax-jmp} in 1-D for a class of initial data. Chemin \cite{Chemin} proved the global existence of smooth solution in multi-D when initial density has finite second spatial moment. Later, Serre \cite{Serre} showed that global-in-time solution exists when the initial density and velocity, $(\varrho_0,\U_0)$ are sufficiently close to $(0,Ax)$ in an appropriate Sobolev norm and $A$ is a positive definite constant matrix. The theory of global existence for isentropic Euler system has been extended by Grassin \cite{Grassin} for a larger class of initial data. In the article \cite{Grassin}, it has been shown that smooth solution exists for all time $t>0$ if (i) the initial density is sufficiently small in Sobolev norm and (ii) eigenvalues of $\nabla_x\textbf{u}_0$ belong to a compact set of $\mathbb{C}\setminus(-\f,0)$. 
In this article, we establish a relation between the eigenvalues of $\nabla_x\textbf{u}_0$ and the global existence of smooth solution. We consider the background velocity $\overline{\U}$ solving the following Cauchy problem 
\begin{equation}\label{eqn-Burgers-intro}
	\left.\begin{array}{rl}
		\pa_t\overline{\U}+\overline{\U}\cdot\nabla_x\overline{\U}&=0\,\,\,\,\mbox{ for }x\in\R^d,t>0,\\
		\overline{\U}(0,x)&=\overline{\U}_0\mbox{ for }x\in\R^d.
	\end{array}\right\}
\end{equation}
For the vectorial Burgers equation \eqref{eqn-Burgers-intro}, global smooth solution exists if and only if the initial data $\overline{\U}_0$ eigenvalues of $\nabla_x\overline{\textbf{u}}_0$ belong to a compact set of $\mathbb{C}\setminus(-\f,0)$, 
 equivalently, we have finite time blow up of $C^1$ solutions of \eqref{eqn-Burgers-intro} if and only if $\nabla_x\overline{\textbf{u}}_0$ has at least one negative real eigenvalue. In this article, we want to show an analogous result for isentropic Euler system \eqref{eqn-isen-1}--\eqref{eqn-isen-2}. We first prove the finite time blow up of smooth solution when $\nabla_x{\textbf{u}}_0(x_0)$ has at least one negative real eigenvalue for some $x_0\in\R^d$. In the spirit of \cite{Grassin}, we also show that global smooth solution exists for $\inf\limits_{x\in\R^d}\min\limits_{\xi\in\mathbb{S}^{d-1}}\nabla_x\U_0(x):\xi\otimes\xi\geq0$ where $\mathbb{S}^{d-1}$ is the unit sphere in $\R^d$ and initial density satisfies a smallness condition. Combining these two results, we conclude the characterization of initial velocity providing global smooth solutions. 



 Though our main concern of this article is to find the  optimal characterization of initial data causing blow up of $C^1$ or continuous solution, it is also an interesting subject to study the description and stability of the shock and how the solution behaves after the blow up time. Majda \cite{Majda-existence,Majda-stability} studied the existence and stability of shock for multi-D system of conservation laws. Description of shock formation has been studied in \cite{Christodoulou,LS-Invent}. We also refer recent results \cite{BuckShkVic} in this direction. For spherically symmetric smooth initial data, the blow up of $L^\f$-norm of density and velocity has been shown in \cite{MRRS-II}.
 
 In a forthcoming paper \cite{GJ-NS}, we obtain analogous results for compressible Navier-Stokes equation with low regularity.
 
\section{Main results}

\subsection{Blow up of continuous solution}
Before we state our result on break-down of continuous solution we provide the definition of weak solution. 
\begin{definition}\label{defn:admissible}
	Let $T^*>0$. We say $(\varrho,\U)\in C([0,T^*);L^1_{loc}(\R^d))\cap L^\f([0,T^*)\times\R^d)$ is a weak solution to \eqref{eqn-isen-1}--\eqref{eqn-isen-2} if the following holds.
	\begin{itemize}
		\item For all $0\leq t_1<t_2<T^*$ and $\varphi\in C^1_c([t_1,t_2]\times\R^d)$, 
		\begin{equation}
			\left[\int\limits_{\R^d}\varrho(t,\cdot)\varphi(t,\cdot)\,dx\right]_{t=t_1}^{t=t_2}=\int\limits_{t_1}^{t_2}\int\limits_{\R^d}\left[\varrho\pa_t\varphi+\varrho\U\cdot\nabla_x\varphi\right]\,dxdt.
		\end{equation}
     	\item For all $0\leq t_1<t_2<T^*$ and $\psi\in C^1_c([t_1,t_2]\times\R^d;\R^d)$, 
	\begin{equation}
		\left[\int\limits_{\R^d}\varrho\U(t,\cdot)\cdot\psi(t,\cdot)\,dx\right]_{t=t_1}^{t=t_2}=\int\limits_{t_1}^{t_2}\int\limits_{\R^d}\left[\varrho\U\cdot\pa_t\psi+\varrho\U\otimes\U:\nabla_x\psi+p(\varrho)\dv_x\psi\right]\,dxdt.
	\end{equation}
	\end{itemize}
A weak solution $(\varrho,\U)$ will be called as admissible solution if it satisfies the following inequality
\begin{align}
	&\int\limits_{0}^{T^*}\int\limits_{\R^d}\left[\left(\frac{\varrho\abs{\U}^2}{2}+P(\varrho)\right)\pa_t\varphi+\left(\frac{\varrho\abs{\U}^2}{2}+P(\varrho)+p(\varrho)\right)\U\cdot\nabla_x\varphi\right]dxdt\nonumber\\
	&+ \int\limits_{\R^d}\left(\frac{\varrho_0\abs{\U_0}^2}{2}+P(\varrho_0)\right)\varphi(0,\cdot)\,dx\geq0,
\end{align}
for all $0\leq\varphi\in C^1_c([0,T^*)\times\R^d)$ where $P(\varrho)=\frac{1}{\gamma-1}\varrho^\gamma$.
\end{definition}
The well-posedness of admissible solution to isentropic Euler system is largely open. In one dimension the well-posedness for admissible weak solution is known in BV setting \cite{Bressan-Liu-Yang,Bressan-WFT,Glimm}. But, in multi-D even for Riemann type planar initial data it has been shown \cite{CDK} that infinitely many $L^\f$ admissible weak solutions exist. In three dimension, infinitely many continuous admissible weak solution has been constructed \cite{GK} so that they satisfy same initial data. On the other hand, uniqueness has been shown \cite{FGJ} when solution has sufficient H\"older regularity and the velocity gradient satisfies a one-sided bound condition. See also \cite{GJK,GJW,GJ-full-Euler} for recent results on uniqueness of H\"older continuous solutions for compressible Euler system and relation hyperbolic conservation laws.

To study the breakdown of continuity of weak solutions, we impose the following condition on initial data as in \cite{Sideris-cmp},
\begin{equation}\label{condition-Sideris}
	\frac{1}{\omega_dR^{d+1}}\int\limits_{\R^d}\varrho_0\U_0\cdot x\,dx\geq  (d+1) \si \norm{\varrho_0}_{L^\f(\R^d)},
\end{equation}
where $\si=\sqrt{\gamma}(\overline{\varrho})^{\frac{\gamma-1}{2}}$ and $\omega_d$ denotes the area of $d$ dimensional sphere. Furthermore, we additionally assume that $(\varrho_0,\U_0)$ satisfies 
\begin{equation}\label{assumption-support}
	(\varrho_0(x),\U_0(x))=(\overline{\varrho},\textbf{0})\mbox{ for }\abs{x}\geq R\mbox{ and }	\int\limits_{\R^d}(\varrho_0-\overline{\varrho})\,dx\geq0,
\end{equation}
for some $\overline{\varrho}>0$ and $R>0$. We show that a continuous admissible solution breaks down in finite time if the initial data satisfies above two conditions \eqref{condition-Sideris} and \eqref{assumption-support}. More precisely, we prove the following result.
\begin{theorem}\label{theorem-cont}
	Let $(\varrho,\U)\in C([0,T^*)\times \R^d)$ be an admissible solution to \eqref{eqn-isen-1}--\eqref{eqn-isen-2} with initial data $(\varrho_0,\U_0)$according to Definition \ref{defn:admissible}. We assume that the initial data $(\varrho_0,\U_0)$ satisfies \eqref{condition-Sideris} and \eqref{assumption-support}. Then, $T^*$ is finite.
\end{theorem}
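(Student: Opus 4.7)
I would follow Sideris's moment-averaging strategy, adapting the argument to the lower regularity by replacing pointwise computations with the weak formulations in \cref{defn:admissible}. The key quantities to track are the total mass perturbation and the radial momentum,
\begin{align*}
M(t) := \int_{\R^d} (\varrho - \overline{\varrho})\, dx, \qquad F(t) := \int_{\R^d} \varrho\, \U \cdot x\, dx,
\end{align*}
and the goal is to derive a Riccati-type differential inequality that, under \eqref{condition-Sideris} and \eqref{assumption-support}, forces $F$ to diverge in finite time, contradicting the uniform polynomial bound $|F(t)| \leq \|\varrho\|_\infty \|\U\|_\infty \omega_d (R+\sigma t)^{d+1}/(d+1)$ implied by continuity and a support control.

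The first step is to establish finite propagation speed: the perturbation $(\varrho - \overline{\varrho}, \U)$ stays supported in $\overline{B}_{R+\sigma t}$ with $\sigma = \sqrt{p'(\overline{\varrho})}$. I would test \eqref{ineq:entropy} against a smooth moving cutoff $\varphi_\epsilon(t,x) = \chi_\epsilon(|x| - R - \sigma t)$, where $\chi_\epsilon$ is a nondecreasing mollification of $\mathbf{1}_{(0,\infty)}$, rewrite in terms of the relative pressure potential $P(\varrho\,|\,\overline{\varrho}) := P(\varrho) - P(\overline{\varrho}) - P'(\overline{\varrho})(\varrho - \overline{\varrho})$, and pass to the limit $\epsilon \to 0$ to obtain
\begin{align*}
\int_{|x| > R + \sigma t}\!\Bigl[\tfrac{1}{2}\varrho|\U|^2 + P(\varrho\,|\,\overline{\varrho})\Bigr]\, dx \;\leq\; \int_{|x|>R}\!\Bigl[\tfrac{1}{2}\varrho_0|\U_0|^2 + P(\varrho_0\,|\,\overline{\varrho})\Bigr]\, dx \;=\; 0,
\end{align*}
by \eqref{assumption-support}. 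Nonnegativity of the integrand then forces $(\varrho, \U) \equiv (\overline{\varrho}, \mathbf{0})$ outside $\overline{B}_{R+\sigma t}$.

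Armed with support control, I would insert test functions of the form $\eta(t)\psi_n(x)$ into the weak forms of \cref{defn:admissible}, with $\psi_n \to 1$ and $\psi_n(x) \to x_i$ on sufficiently large balls, to make Sideris's integration-by-parts computations rigorous. This yields $M(t) = M(0) \geq 0$ and
\begin{align*}
F'(t) \;=\; \int_{\R^d} \varrho|\U|^2\, dx + d\int_{\R^d} (p(\varrho) - p(\overline{\varrho}))\, dx.
\end{align*}
Convexity of $p$ together with $M(0) \geq 0$ gives $\int(p(\varrho)-p(\overline{\varrho}))\, dx \geq \sigma^2 M(0) \geq 0$, so $F'(t) \geq \int \varrho|\U|^2\, dx$. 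The support bound $\int \varrho\, dx \leq M(0) + \overline{\varrho}\,\omega_d(R+\sigma t)^d/d$ combined with Cauchy--Schwarz applied to $F(t)$ produces the nonlinear ODI $F(t)^2 \leq C(1+t)^{d+2} F'(t)$. Integrating $-(1/F)' \geq C^{-1}(1+t)^{-(d+2)}$ and noting that, under \eqref{condition-Sideris}, $1/F(0)$ lies strictly below the convergent integral $\int_0^\infty C^{-1}(1+s)^{-(d+2)}\, ds$, one gets $F(t^*) = +\infty$ at some finite $t^*$, whence $T^* \leq t^*$ is finite.

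The principal obstacle is the rigorous execution of the finite-propagation step under mere continuity: one cannot appeal to smooth characteristics or classical energy identities, so the moving-cutoff computation must be carried out entirely within the distributional framework, with care that no boundary contributions leak in the limit $\epsilon \to 0$. Once that is in place, the adaptation of Sideris's ODE argument is largely mechanical, but one still needs to track the constants in the Cauchy--Schwarz step carefully so that the explicit threshold in \eqref{condition-Sideris} precisely suffices to trigger finite-time blow-up of $F$.
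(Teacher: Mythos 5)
Your proposal follows essentially the same two-step strategy as the paper: a finite-speed-of-propagation lemma derived from the admissibility inequality \eqref{ineq:entropy} via relative-entropy arguments, followed by Sideris's moment/Riccati argument made rigorous by mollification. The ODE step ($F'(t) \geq \int\varrho|\U|^2\,dx$ via Jensen, Cauchy--Schwarz bound on $F^2$, integration of $-(1/F)'$) matches the paper's section~\ref{sec:break-down-cont} in substance and detail, and correctly yields the threshold in \eqref{condition-Sideris}.

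However, there is a genuine gap in your treatment of the finite-propagation step, and it is not quite the difficulty you flag. Testing \eqref{ineq:entropy} with a moving cutoff $\chi_\e(|x|-R-\si t)$ and rewriting in terms of the relative entropy produces a boundary term of the form $\chi_\e'(\cdot)\bigl[Q_{\mathrm{rel}}\cdot\nu - \si\,\eta_{\mathrm{rel}}\bigr]$, and for the exterior integral to remain zero you need this to be $\leq 0$ pointwise on the moving sphere. But the estimate $Q_{\mathrm{rel}}\cdot\nu \leq \si\,\eta_{\mathrm{rel}}$ (i.e.\ the positivity of the combination $\Phi(\si+\e,\xi;\varrho,\m)$, cf.\ \eqref{estimate-lower-Phi}) holds only for states $(\varrho,\m)$ in a small neighborhood of $(\overline\varrho,\mathbf 0)$; for large perturbations the local characteristic speed exceeds $\si$ and the sign is wrong. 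Asserting that the solution is close to the background on $\{|x|=R+\si t\}$ is exactly the conclusion you are trying to reach, so the argument as sketched is circular. The paper's Lemma~\ref{lemma:fsp} breaks the circularity via a first-escape-time contradiction: one fixes the earliest $(t_0,x_0)$ outside the cone where the perturbation is nonzero, restricts to a backward cone $\mathcal{J}_{t_0,x_0}$, and uses continuity of $(\varrho,\U)$ to guarantee smallness throughout that cone so that \eqref{estimate-lower-Phi} applies. Your direct moving-cutoff computation would need to be supplemented by this open--closed / first-time argument (or an equivalent bootstrap); without it, the inequality you display is not justified. Once that step is supplied, the rest of your argument goes through exactly as in the paper.
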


Sideris \cite{Sideris-cmp} proved that $C^1$ solutions break down in finite time if the initial data satisfies the conditions \eqref{condition-Sideris} and \eqref{assumption-support}. With a similar condition, blow up of classical solution has been shown \cite{Xin} Navier-Stokes. We refer to \cite{Suzuki,Yuen}  and references therein for more results on break-down of $C^1$ solutions with a condition like \eqref{condition-Sideris}. 

We note that the entropy condition \eqref{ineq:entropy} and the continuity of solution plays key role to prove that characteristics generated from $\{\abs{x}\leq R\}$ are confined inside the cone $\{\abs{x}\leq R+\si t\}$. For $C^1$ solutions we do not additionally need to assume \eqref{ineq:entropy} as it can be shown by using \eqref{eqn-isen-1}--\eqref{eqn-isen-2} (see \cite{Dafermos}) that equality holds in \eqref{ineq:entropy} for $C^1$ solutions. For isentropic Euler equations, equality in \eqref{ineq:entropy} can still hold even for weak solutions which may not be $C^1$ (see \cite{BGSTW,FGGW,GMS}). Furthermore, we would like to mention a recent result \cite{GK} on existence of infinitely many continuous admissible solutions of isentropic Euler equations. For some specific choice of continuous initial data, this \cite{GK} gives an existential results for \eqref{eqn-isen-1}--\eqref{eqn-isen-2}. We wish to remark that due to our result in Theorem \ref{theorem-cont} the solutions constructed in \cite{GK} can not be extended globally (in time) if the initial data satisfies the integral condition \eqref{condition-Sideris}.

\subsection{Blow up of $C^1$ solution}
In the view of blow up result of Sideris \cite{Sideris-cmp}, it is natural to ask whether global $C^1$ solution exists if a $C^1$ initial data does not satisfy the condition \eqref{condition-Sideris}. We answer to this question in our next result. To prove a finite time blow up of $C^1$ solution, we impose the following conditions on the initial data. 
	\begin{description}
	\descitem{(ND)}{Cond-1}  {\em Negative definiteness of velocity gradient:} There exists $\la_{max}>0$ and $x_0\in\R^d$ such that  
	\begin{equation}\label{condition-neg-1}
		\nabla_x\U_0(x_0)\mbox{ is symmetric and }\nabla_x\U_0(x_0)\xi_0=-\la_{max}\xi_0\mbox{ for some }\xi_0\in\mathbb{S}^{d-1}.
	\end{equation}
%
%
\end{description}

We now state our result on blow up of $C^1$ solutions of isentropic Euler system \eqref{eqn-isen-1}--\eqref{eqn-isen-2}.
\begin{proposition}\label{theorem:blow-up-C1-1}
	Let $(\varrho,\U)\in C^1([0,T^*]\times\R^d)$ be a solution of isentropic Euler system \eqref{eqn-isen-1}--\eqref{eqn-isen-2} corresponding to the initial data $(\varrho_0,\U_0)$ satisfying \descref{Cond-1}{(ND)} and $\U_0(x_0+r\xi_0)-\U_0(x_0)=-\la_0r\xi_0$ for some $r>0,\la_0>0,\xi_0\in\mathbb{S}^{d-1}$ and $x_0\in\R^d$. Define $M:=\norm{\nabla_x\U}_{L^\f([0,T^*]\times\R^d)}$. 
	If $\varrho,\U$ and $\la_0$ satisfy the following estimate,
	\begin{equation}\label{condition-eps-1}
		\abs{\la_{max}-\la_0}+\norm{\pi(\cdot)\nabla_x\pi(\cdot)}_{L^\f([0,1/\la_{max}]\times\R^d)}\leq\e_0:=\frac{\la_0r}{2}\left[\frac{rM}{\la_{max}}+\frac{2e^{\frac{M}{\la_{max}}}}{M}\right]^{-1},
	\end{equation}
   where $\pi=\sqrt{\frac{\gamma-1}{4\gamma}}\varrho^{\frac{\ga-1}{2}}$, then we have $T^*\leq 1/\la_{max}$.
\end{proposition}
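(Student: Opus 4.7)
The strategy is proof by contradiction using a pair of Lagrangian trajectories. Suppose $T^*>1/\lambda_{max}$, so that $M:=\|\nabla_x\U\|_{L^\infty([0,1/\lambda_{max}]\times\R^d)}$ is finite. Introduce the characteristic curves $X_1,X_2:[0,1/\lambda_{max}]\to\R^d$ defined by $\dot X_i=\U(t,X_i)$ with $X_1(0)=x_0$ and $X_2(0)=x_0+r\xi_0$, and set $Y(t)=X_2(t)-X_1(t)$, $V(t)=\U(t,X_2(t))-\U(t,X_1(t))$. Using the substitution $\pi=\sqrt{(\gamma-1)/(4\gamma)}\,\varrho^{(\gamma-1)/2}$, the momentum equation becomes $\partial_t\U+(\U\cdot\nabla)\U=-C_\gamma\,\pi\nabla\pi$ for some $\gamma$-dependent constant $C_\gamma>0$, and along characteristics this yields the closed system
\[
Y'(t)=V(t),\qquad V'(t)=-C_\gamma\bigl[\pi\nabla\pi(t,X_2(t))-\pi\nabla\pi(t,X_1(t))\bigr]=:G(t),
\]
with initial data $Y(0)=r\xi_0$, $V(0)=-\lambda_0 r\xi_0$ (the latter from the finite-difference hypothesis) and pointwise bound $|G(t)|\leq 2C_\gamma\|\pi\nabla\pi\|_{L^\infty}\leq 2C_\gamma\e_0$.

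Next I would derive two competing estimates on $|Y(1/\lambda_{max})|$. Duhamel's formula yields the linearization
\[
Y(t)=r(1-\lambda_0 t)\xi_0+\int_0^t(t-s)G(s)\,ds,
\]
which, together with the $L^\infty$-bound on $G$ and $|\lambda_{max}-\lambda_0|\leq\e_0$, gives the upper bound $|Y(1/\lambda_{max})|\leq r\e_0/\lambda_{max}+C_\gamma\e_0/\lambda_{max}^2$. On the other hand, the $C^1$ hypothesis combined with the mean-value theorem yields $|V(t)|\leq M|Y(t)|$, so $\frac{d}{dt}|Y(t)|\geq -M|Y(t)|$, and Gr\"onwall's inequality produces the strictly positive lower bound $|Y(1/\lambda_{max})|\geq re^{-M/\lambda_{max}}$.

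The contradiction consists of verifying that the prescribed $\e_0$ forces the upper bound to fall strictly below the lower bound $re^{-M/\lambda_{max}}$. The two terms $rM/\lambda_{max}$ and $2e^{M/\lambda_{max}}/M$ in the denominator of $\e_0$ correspond to the two origins of the estimate: the first tracks the linear Duhamel contribution (carrying the $M$-factor from $|V|\leq M|Y|$), and the exponential piece encodes the Gr\"onwall envelope controlling the lower bound. The factor $\lambda_0 r/2$ in the numerator records the initial velocity gap $|V(0)|=\lambda_0 r$ with a $1/2$ safety slack, and the \descref{Cond-1}{(ND)} assumption enters through the permissible gap $|\lambda_{max}-\lambda_0|\leq\e_0$, which forces the linear part $r(1-\lambda_0 t)\xi_0$ of $Y$ to nearly vanish at $t=1/\lambda_{max}$, leaving only the $\e_0$-controlled corrections.

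The main obstacle I anticipate is the precise bookkeeping in the last step: one must match the additive structure of the Duhamel upper bound (with $L^\infty$-bounded source $G$) against the multiplicative Gr\"onwall envelope $re^{-Mt}$ in order to produce exactly the denominator $rM/\lambda_{max}+2e^{M/\lambda_{max}}/M$. Once these sharp constants are reconciled, the strict inequality between upper and lower bounds on $|Y(1/\lambda_{max})|$ is immediate and yields $T^*\leq 1/\lambda_{max}$.
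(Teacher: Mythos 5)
Your proposal takes a genuinely different route from the paper and, more importantly, it does not close with the stated $\e_0$.

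The paper compares $\U$ against the vectorial Burgers solution $\V$ (solving $\pa_t\V+\V\cdot\nabla_x\V=0$ with the same data $\U_0$) and tracks the \emph{Burgers} characteristics $X(t)=x_\xi+t\U_0(x_\xi)$, $Y(t)=x_0+t\U_0(x_0)$, whose separation collapses exactly linearly: $|X(t)-Y(t)|=(1-\la_0 t)\,r$. Since $\V$ carries the constant difference $\U_0(x_\xi)-\U_0(x_0)=-\la_0 r\xi_0$ along these characteristics, while $\U=\V+\boldsymbol{\varphi}$ at the same two points can differ by at most $M\cdot(1-\la_0 t)r$ (which vanishes at $t=1/\la_0$), the discrepancy $\boldsymbol{\varphi}$ is forced to be of size $\approx\la_0 r$, contradicting the Gr\"onwall smallness $\|\boldsymbol{\varphi}(t)\|_{L^\infty}\lesssim \e_0 e^{Mt}/M$. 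This is what produces the two denominator terms $rM/\la_{max}$ and $2e^{M/\la_{max}}/M$ in the stated $\e_0$.

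You instead track the Lagrangian trajectories of $\U$ itself and set up a Duhamel upper bound against a Gr\"onwall lower bound on their separation. The local ingredients are correct (Duhamel for $Y''=G$, the mean value bound $|V|\leq M|Y|$, nondegeneracy of $C^1$-trajectories, the envelope $|Y(t)|\geq re^{-Mt}$). But the closing inequality does \emph{not} follow from the stated $\e_0$, and this is not a constant-bookkeeping issue. Your contradiction needs
\[
	\frac{r\e_0}{\la_{max}}+\frac{C_\gamma\e_0}{\la_{max}^2}<re^{-M/\la_{max}},
	\qquad\text{i.e.}\qquad
	\e_0<\frac{r\la_{max}^2\,e^{-M/\la_{max}}}{r\la_{max}+C_\gamma},
\]
whereas the stated $\e_0=\frac{\la_0 r}{2}\bigl[\frac{rM}{\la_{max}}+\frac{2e^{M/\la_{max}}}{M}\bigr]^{-1}$ behaves like $\frac{\la_0 rM}{4}e^{-M/\la_{max}}$ once $M/\la_{max}$ is moderately large. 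The ratio of the stated $\e_0$ to your required threshold is roughly $\frac{\la_0 M(r\la_{max}+C_\gamma)}{4\la_{max}^2}$, which grows without bound in $M$; so the stated $\e_0$ is in general \emph{larger} than what your argument can tolerate. The structural reason is that the Gr\"onwall lower bound $re^{-Mt}$ on the separation of $\U$-trajectories is much cruder than what the Burgers characteristics achieve, whose separation at $t=1/\la_{max}$ is $|1-\la_0/\la_{max}|r\leq \e_0 r/\la_{max}$, dramatically smaller than $re^{-M/\la_{max}}$. As written, your argument proves the proposition only with a strictly smaller $\e_0$; to reach the threshold in the statement you would need to introduce the Burgers comparison (or an equivalent linearization along the free flow) that captures the exact linear collapse of characteristics and allows the $\|\boldsymbol\varphi\|_\infty\lesssim\e_0 e^{Mt}/M$ estimate to absorb the constant velocity gap.
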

\begin{remark}\label{remark-prop-2.3}
	\noi\begin{enumerate}
		\item We would like to mention that in order to prove the blow up result, the smallness assumption on $\pi\nabla_x\pi$ as in \eqref{condition-eps-1} is not essentially required on whole $\R^d$. One can possibly replace it by a locally smallness assumption on the domain of dependence. 
		\item We note that the condition on $\U_0$ of having a direction such that $\U_0(x_0+r\xi_0)-\U_0(x_0)=-\la_0r\xi_0$ holds can be replaced by assuming that $\nabla_x\U_0(x)$ has $d$ real eigenvalues in a neighborhood of $x_0\in\R^d$. When $\nabla_x\U_0$ has $d$ real eigenvalues then it is guaranteed that there exists a direction such that  $\U_0(x_0+r\xi_0)-\U_0(x_0)=-\la_0r\xi_0$ holds. This can be shown in a similar manner as in Lax theory for solving Riemann problem of $n\times n$ hyperbolic system in 1-D (see \cite{Bressan-book,Lax-1957}).
	\end{enumerate}
\end{remark}
Since the smallness condition \eqref{condition-eps-1} in Proposition \ref{theorem:blow-up-C1-1} is given on the solution, it does not directly serve our purpose to classify the initial data causing finite time blow up of $C^1$. We want to replace \eqref{condition-eps-1} by considering smallness on $H^m$-norm of $\nabla^2_x\varrho^{\frac{\ga-1}{2}}_0$ and $\nabla^2_x\U_0$ and the blow up result holds true.
\begin{theorem}\label{theorem:blow-up-C1-2}
	Let $(\varrho,\U)\in C^1([0,T^*)\times\R^d)$ be a solution of isentropic Euler system \eqref{eqn-isen-1}--\eqref{eqn-isen-2} corresponding to the initial data $(\varrho_0,\U_0)$ satisfying 
	\begin{equation}\label{smallness-Hm}
	\left(\norm{\nabla^2_x\varrho^{\frac{\ga-1}{2}}_0}_{H^m(\R^d)}+\norm{\nabla^2_x\U_0}_{H^m(\R^d)}\right)\cdot\norm{\varrho^\frac{\ga-1}{2}_0}_{L^\f(\R^d)}\leq \e_0\mbox{ for some integer }m>1+\frac{d}{2},\e_0>0. 
	\end{equation}
	We assume that $\U_0$ satisfies  \descref{Cond-1}{(ND)} for some $\la_{max} >0$. Then, $T^*$ is finite provided $\e_0<\frac{\la^2_{max}}{5(\gamma-1)}$.
\end{theorem}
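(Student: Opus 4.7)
The plan is to reduce Theorem~\ref{theorem:blow-up-C1-2} to Proposition~\ref{theorem:blow-up-C1-1} by verifying, from the $H^m$-smallness assumption \eqref{smallness-Hm} and \descref{Cond-1}{(ND)}, the quantitative hypotheses of Proposition~\ref{theorem:blow-up-C1-1}: namely the existence of a suitable triple $(r,\lambda_0,\xi_0)$ and the pointwise bounds on $M$ and on $\|\pi\nabla_x\pi\|_{L^\infty}$. We may suppose the $C^1$ solution persists on the whole interval $[0,1/\lambda_{max}]$, since otherwise $T^*<1/\lambda_{max}$ already and the conclusion holds trivially.

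The first ingredient is a Sobolev translation. Since $m>1+d/2$, the embedding $H^m(\R^d)\hookrightarrow C^1(\R^d)$ converts \eqref{smallness-Hm} into
\begin{equation*}
	\|\nabla^2_x\U_0\|_{L^\infty}+\|\nabla^2_x\varrho_0^{(\gamma-1)/2}\|_{L^\infty}\leq C\e_0/\|\varrho_0^{(\gamma-1)/2}\|_{L^\infty}.
\end{equation*}
Picking $x_0,\xi_0$ from \descref{Cond-1}{(ND)} and Taylor-expanding $\U_0$ along the segment $[x_0,x_0+r\xi_0]$ gives
\begin{equation*}
	\U_0(x_0+r\xi_0)-\U_0(x_0)=-\lambda_{max}r\xi_0+O(r^2\|\nabla_x^2\U_0\|_{L^\infty}),
\end{equation*}
so that $\lambda_0:=\lambda_{max}+O(r\e_0/\|\varrho_0^{(\gamma-1)/2}\|_{L^\infty})$ can be read off from the component parallel to $\xi_0$, while the transverse remainder is of the same order and is absorbed into the left-hand side of \eqref{condition-eps-1}.

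The bulk of the work is to propagate smallness in order to control $M$ and $\|\pi\nabla_x\pi\|_{L^\infty([0,1/\lambda_{max}]\times\R^d)}$. The natural strategy is to split $\U=\overline{\U}+\W$ with $\overline{\U}$ solving the vectorial Burgers equation \eqref{eqn-Burgers-intro} from the datum $\U_0$, and to run $H^m$ energy estimates for the symmetrized isentropic Euler system in the perturbation variables $(\pi,\W)$. Characteristics for \eqref{eqn-Burgers-intro} furnish $\|\nabla_x\overline{\U}\|_{L^\infty}\leq C\lambda_{max}$ on $[0,1/\lambda_{max}]$, and a standard bootstrap combined with \eqref{smallness-Hm} and the threshold $\e_0<\lambda_{max}^2/(5(\gamma-1))$ then yields
\begin{equation*}
	M\leq C\lambda_{max},\qquad \|\pi\nabla_x\pi\|_{L^\infty([0,1/\lambda_{max}]\times\R^d)}\leq C\e_0.
\end{equation*}
Inserting these into \eqref{condition-eps-1} and choosing $r$ in a range dictated by $\e_0$ and $\lambda_{max}$ verifies the hypothesis of Proposition~\ref{theorem:blow-up-C1-1}, and we conclude $T^*\leq 1/\lambda_{max}<\infty$.

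The main obstacle is the energy estimate above. Because \descref{Cond-1}{(ND)} forces the Burgers background $\overline{\U}$ itself to lose $C^1$-regularity precisely at $t=1/\lambda_{max}$, one must close the estimates on a time interval that approaches the Burgers singularity; the explicit threshold $\e_0<\lambda_{max}^2/(5(\gamma-1))$ encodes the sharp balance between the acoustic dispersion carried by $\pi\nabla_x\pi$ and the compressive collapse of characteristics driven by $\lambda_{max}$. Care must be taken to ensure that the cross terms coupling $\W$ with $\nabla_x\overline{\U}$ in the $H^m$ estimate do not exhaust this margin before $\W$ itself can develop the singularity needed to invoke Proposition~\ref{theorem:blow-up-C1-1}.
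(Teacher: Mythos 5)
Your proposal tries to reduce Theorem~\ref{theorem:blow-up-C1-2} to Proposition~\ref{theorem:blow-up-C1-1} by a decomposition $\U=\overline{\U}+\W$ and a bootstrap. The paper does not do this: it works directly along the characteristics of $\U$ itself (not of the Burgers solution $\V$), derives a Riccati inequality $a'(t)\geq a^2(t)-2C_1\e_0 e^{2C_2 t}$ for $a(t)=-\nabla_x\U(t,X(t,x_0)):\xi_0\otimes\xi_0$, and integrates it via the substitution $b=a^m$, $\Lambda=b\,e^{-At}$; the only use of the $H^m$ energy estimate is to control $\|\nabla^2_x\pi(t,\cdot)\|_{L^\infty}$ by $e^{C_2 t}\e_0$, which feeds into the Riccati source. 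This is quite different from your reduction, and the differences matter because there are two essential gaps in your argument.

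First, the bound $M\leq C\lambda_{max}$ (and the stated bound $\|\nabla_x\overline{\U}\|_{L^\infty}\leq C\lambda_{max}$ on $[0,1/\lambda_{max}]$) cannot hold. Under \descref{Cond-1}{(ND)}, the Burgers gradient along characteristics is $\nabla_x\overline{\U}(t,X(t,x_0))=(\mathbb{I}_d+t\nabla_x\U_0(x_0))^{-1}\nabla_x\U_0(x_0)$, which is singular as $t\uparrow 1/\lambda_{max}$ precisely because $\nabla_x\U_0(x_0)$ has eigenvalue $-\lambda_{max}$; hence $\|\nabla_x\overline{\U}\|_{L^\infty}\to\infty$ on $[0,1/\lambda_{max})$ and the $H^m$ energy estimate for $(\pi,\W)$ has a coefficient that is not integrable up to $t=1/\lambda_{max}$. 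You acknowledge this obstacle in your final paragraph, but it is not a technicality to be balanced against $\e_0$: it is the reason the reduction to Proposition~\ref{theorem:blow-up-C1-1} does not close. Without a uniform bound on $M=\|\nabla_x\U\|_{L^\infty([0,1/\lambda_{max}]\times\R^d)}$ (and one of size $O(\lambda_{max})$, because the right-hand side of \eqref{condition-eps-1} decays like $M e^{-M/\lambda_{max}}$), there is no way to verify \eqref{condition-eps-1} from \eqref{smallness-Hm}.

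Second, Proposition~\ref{theorem:blow-up-C1-1} requires the exact vector identity $\U_0(x_0+r\xi_0)-\U_0(x_0)=-\lambda_0 r\xi_0$. Your Taylor expansion produces a transverse remainder of order $r^2\|\nabla^2_x\U_0\|_{L^\infty}$, and you claim this can be absorbed into the left-hand side of \eqref{condition-eps-1}; but that left-hand side contains only the scalar $|\lambda_{max}-\lambda_0|$, which controls the parallel defect, not the transverse one. The hypothesis of Proposition~\ref{theorem:blow-up-C1-1} is rigid on this point and offers no slack for a non-collinear remainder. The paper sidesteps both issues by never invoking Proposition~\ref{theorem:blow-up-C1-1} in the proof of Theorem~\ref{theorem:blow-up-C1-2}: instead it differentiates the symmetrized system once, kills the anti-symmetric part of $\nabla_x\U$ along the flow by Gr\"onwall, and obtains a self-contained Riccati blow-up in which the only smallness that must be propagated is that of $\pi\nabla^2_x\pi$, handled by Proposition~\ref{proposition:reg}.
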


\begin{remark}
	\noi\begin{enumerate}
		\item We remark that there exists initial data $\varrho_0,\U_0$ such that it satisfies \descref{Cond-1}{(ND)} and \eqref{smallness-Hm} but it fails to satisfy \eqref{condition-Sideris}. See Example \ref{example-2} in section \ref{sec:thm-2}. Therefore, for the $C^1$ solutions arsing from initial data as in Example \ref{example-2}, we can not invoke blow up result of Sideris \cite{Sideris-cmp} but Theorem \ref{theorem:blow-up-C1-1} is still applicable as they satisfies \eqref{smallness-Hm}.
		\item Similar to (1) of Remark \ref{remark-prop-2.3}, we again mention that one can obtain the blow up result of Theorem \ref{theorem:blow-up-C1-2} by using a local smallness condition with a proper use of finite speed of propagation.
	\end{enumerate}
\end{remark}

We prove Theorem \ref{theorem:blow-up-C1-2} in a similar technique as Proposition \ref{theorem:blow-up-C1-1} with the help of an energy estimate of Kato \cite{Kato} for $C^1$ solutions.

\subsection{Global existence of smooth solution}
We remark that the result of Theorem \ref{theorem:blow-up-C1-2} complements the global existence theory developed in \cite{Chemin,Grassin,Serre}. We recall that if initial data satisfies (i) $\min\limits_{\xi\in\mathbb{S}^{d-1}}\nabla_x\U_0(x):\xi\otimes\xi\geq c>0$ for all $x\in\R^d$ and (ii) $\pi_0$ is sufficiently small then the global smooth solution exists (see \cite{Grassin}). As Theorem \ref{theorem:blow-up-C1-2} rules out the possibility of global solution when velocity gradient $\nabla_x\U_0$ has at least one negative real eigenvalue, now it is natural to ask whether global solution when $\nabla_x\U_0(x):\xi\otimes\xi\geq0$ for all $\xi\in\mathbb{S}^{d-1}$. We answer this question in our next result.
\begin{proposition}\label{theorem:global-existence}
	Let $m>1+\frac{d}{2}$ and $\varrho_0,\U_0$ be satisfying 
	\begin{description}
		\descitem{(G-1)}{hyp-1} $\nabla_x^2\U_0\in H^{m-1}(\R^d)$ and $\nabla_x\U_0\in L^\f(\R^d)$.
		\descitem{(G-2)}{hyp-2} $\inf\left\{\nabla_x\U_0(x):\xi\otimes\xi;\,\xi\in\mathbb{S}^{d-1},x\in\R^d\right\}\geq0$ where $\mathbb{S}^{d-1}=\{x\in\R^d;\,\abs{x}=1\}$. 
		\descitem{(G-3)}{hyp-3} $\varrho_0$ has compact support and furthermore, 
		\begin{equation}
			\mbox{supp}(\varrho_0)\subset \left\{x\in\R^d;\,\inf\limits_{\xi\in\R^d,\abs{\xi}=1}\nabla_x\U_0(x):\xi\otimes\xi\geq\al\right\}\mbox{ for some }\al>0.
		\end{equation}
	    
	\end{description}
    Then there exists $\e_0>0$ such that the following holds: for $\norm{\varrho^{\frac{\gamma-1}{2}}}_{H^{m}(\R^d)}<\e_0$, there exists a global smooth solution $(\varrho,\U)$ of \eqref{eqn-isen-1}--\eqref{eqn-isen-2} corresponding to the initial data $(\varrho_0,\U_0)$ and it satisfies,
    \begin{equation}
    	\left(\varrho^{\frac{\gamma-1}{2}},\U-\V\right)\in C^1([0,\f);H^{m-1}((\R^d))\cap C([0,\f);H^m(\R^d)),
    \end{equation}
where $\V$ is the global solution to \eqref{eqn-Burgers-intro} with initial data $\U_0$.
\end{proposition}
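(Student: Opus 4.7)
The approach is the Grassin--Serre decomposition $\U = \V + \W$, where $\V$ is the global $C^1$ solution of the vectorial Burgers equation \eqref{eqn-Burgers-intro} with data $\U_0$. Hypothesis \descref{hyp-2}{(G-2)} is exactly what is needed to produce such a global $\V$: for every $t\geq 0$ and $x,\xi\in\R^d$ one has $\xi^T(I + t\nabla_x\U_0(x))\xi \geq \abs{\xi}^2$, so the Burgers characteristic map $Y(t,x) = x + t\U_0(x)$ is a diffeomorphism for all $t\geq 0$, and the explicit formula $\nabla_x\V(t,Y(t,x)) = \nabla_x\U_0(x)(I+t\nabla_x\U_0(x))^{-1}$ produces a smooth global flow. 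Combined with \descref{hyp-1}{(G-1)} this yields $\V - \U_0 \in C^1([0,\f);H^{m-1})\cap C([0,\f);H^m)$.

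\textbf{Reformulation and support tracking.} Passing to the Makino--Ukai--Kawashima variable $\pi = \sqrt{(\gamma-1)/(4\gamma)}\,\varrho^{(\gamma-1)/2}$ (already used in Proposition \ref{theorem:blow-up-C1-1}), the system \eqref{eqn-isen-1}--\eqref{eqn-isen-2} becomes a symmetric hyperbolic system in $(\pi,\U)$ of the schematic form
\begin{align*}
	\pa_t \pi + \U\cdot\nabla_x\pi + \tfrac{\gamma-1}{2}\pi\,\dv_x\U &= 0,\\
	\pa_t \U + \U\cdot\nabla_x\U + c_\gamma\,\pi\nabla_x\pi &= 0.
\end{align*}
Subtracting the Burgers equation for $\V$ gives a symmetric hyperbolic system for $(\pi,\W)$ with $\W := \U - \V$, initial data $(\pi_0,\mathbf{0})$, and the coupling to $\V$ entering through the linear source terms $\W\cdot\nabla_x\V$ and $\tfrac{\gamma-1}{2}\pi\,\dv_x\V$. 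By \descref{hyp-3}{(G-3)}, on $\mbox{supp}(\varrho_0)$ one has $\nabla_x\U_0:\xi\otimes\xi \geq \al >0$, which via the above Burgers formula gives a lower bound on the symmetric part of $\nabla_x\V(t,Y(t,x_0))$ of order $\tfrac{c\,\al}{1+t\al}$ for $x_0 \in \mbox{supp}(\varrho_0)$. Finite speed of propagation for $(\pi,\W)$, together with a bootstrap on $\norm{\W}_{L^\f}$, ensures that the support of $\pi(t,\cdot)$ remains a small Lipschitz perturbation of the Burgers image of $\mbox{supp}(\varrho_0)$, so the coercivity estimate persists on $\mbox{supp}(\pi(t,\cdot))$ with a slightly smaller constant.

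\textbf{Energy estimate.} A Kato-type $H^m$ energy estimate (using $m > 1 + d/2$ for Moser-type commutator bounds and the embedding $H^m\hookrightarrow C^1$) applied to the $(\pi,\W)$ system, where the top-order terms in $\U$ cancel by symmetry and the dispersive source terms are integrated against the unknown, produces an inequality of the form
\[
	\frac{d}{dt}\bigl(\norm{\pi}_{H^m}^2+\norm{\W}_{H^m}^2\bigr) \leq -\frac{c}{1+t}\bigl(\norm{\pi}_{H^m}^2+\norm{\W}_{H^m}^2\bigr) + C\bigl(\norm{\pi}_{H^m}^2+\norm{\W}_{H^m}^2\bigr)^{3/2},
\]
with the negative term coming from the coercivity of $\nabla_x\V$ on $\mbox{supp}(\pi)$ and the cubic correction from the quasilinear perturbation. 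For $\norm{\pi_0}_{H^m}<\e_0$ small enough (and $\W_0 = 0$), this inequality is self-improving and yields uniform-in-time bounds on the energy, whence global existence via the standard continuation principle for symmetric hyperbolic systems.

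\textbf{Main obstacle.} The principal difficulty, compared to \cite{Grassin}, is that \descref{hyp-2}{(G-2)} is only non-strict and \descref{hyp-3}{(G-3)} gives strict positivity only on $\mbox{supp}(\varrho_0)$. There is therefore no global lower bound on $\dv_x\V$, and one must track the support of $\pi(t,\cdot)$ precisely; the heart of the proof is the coupled bootstrap in which smallness of $\W$ in $L^\f$ forces the support of $\pi$ to stay in the coercive region, which in turn drives the $H^m$ energy down and thus keeps $\W$ small, closing the loop on $[0,\f)$.
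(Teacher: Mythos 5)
Your proposal has the same overall skeleton as the paper's proof: decompose $\U=\V+\W$ with $\V$ the global Burgers flow (using \descref{hyp-2}{(G-2)}), pass to the Makino--Ukai--Kawashima variables, use \descref{hyp-3}{(G-3)} plus finite speed of propagation to confine $\mbox{supp}(\pi(t,\cdot))$ to the coercive region, and close via a Kato-type $H^m$ energy estimate. That much matches. But there are two places where your argument, as written, has gaps that the paper fills with specific devices borrowed from \cite{Grassin}.

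First, your closing energy inequality
\[
	\frac{d}{dt}\bigl(\norm{\pi}_{H^m}^2+\norm{\W}_{H^m}^2\bigr) \leq -\frac{c}{1+t}\bigl(\norm{\pi}_{H^m}^2+\norm{\W}_{H^m}^2\bigr) + C\bigl(\norm{\pi}_{H^m}^2+\norm{\W}_{H^m}^2\bigr)^{3/2}
\]
does not, in general, yield a uniform-in-time bound. Writing $E$ for the unweighted energy, the damping $-\tfrac{c}{1+t}E$ gives at best polynomial decay $(1+t)^{-c}$, while the cubic term $CE^{3/2}$ carries no time-integrable prefactor. Substituting $\tilde E=(1+t)^cE$ gives $\dot{\tilde E}\lesssim (1+t)^{-c/2}\tilde E^{3/2}$, which closes by Gr\"onwall only if $c>2$; you never verify this, and there is no reason $c>2$ in all dimensions, particularly because the coercivity is available only on $\mbox{supp}(\pi)$ and not globally on $\W$. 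The paper (following Grassin) avoids this by working with the time-weighted energy $\Gamma(t)=\sum_{k=0}^m(1+t)^{\delta_k}\Gamma_k(t)$ with exponents $\delta_k=k+b-a$ chosen so that the resulting ODE takes the form $\dot\Gamma+\frac{a}{1+t}\Gamma\leq C\Gamma^2+\bar C(1+t)^{-2}\Gamma$ with $a>1$; after the change of variable $\tilde\Gamma=(1+t)^a\Gamma$ the nonlinearity picks up an integrable coefficient $(1+t)^{-a}$ and the argument closes for small $\Gamma(0)$ (this is \cite[Proposition 4]{Grassin}). The weighting is not cosmetic: it is what makes the quadratic (rather than $3/2$-power) structure of Grassin's inequality exploitable.

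Second, you handle the support-confinement step by a ``bootstrap on $\norm{\W}_{L^\f}$'' to keep $\mbox{supp}(\pi)$ a ``small Lipschitz perturbation'' of the Burgers image of $\mbox{supp}(\varrho_0)$. This is in the right spirit but is heuristic. The paper instead cuts off $\U_0$ to $\U_0^\la=\psi_\la\U_0$ (needed anyway since $\U_0$ itself is not in $H^m$, only $\nabla_x^2\U_0$ is), invokes local existence for the compactly-supported data, and then applies Grassin's local uniqueness proposition for symmetric hyperbolic systems to conclude outright that $(\pi,\U)=(0,\V)$ outside the cone $\mathcal{Q}_\al$ issuing from $\Omega_\al\cap\{\abs{y}\le\la\}$. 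That removes the need to track a quantitative Lipschitz perturbation of the support and makes the coercivity region unambiguous. Your write-up neither performs the cutoff nor explains how the uniqueness/finite-speed argument yields the exact equality outside $\mathcal{Q}_\al$, which is what makes the rest of the estimate rigorous.
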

\begin{remark}
	We note that the hypothesis \descref{hyp-3}{(G-3)} restricts support of density $\varrho$ in a set where $\nabla_x\U_0(x):\xi\otimes\xi$ is strictly away from $0$ for all $\xi\in\mathbb{S}^{d-1}$. Therefore, if $\nabla_x u_0\equiv \textbf{0}_d$ or $\nabla_x u_0(x)$ is anti-symmetric matrix for all $x\in\R^d$, then the hypothesis \descref{hyp-3}{(G-3)} forces $\varrho_0\equiv 0$ and in this case the conclusion of Proposition \ref{theorem:global-existence} holds true trivially.
\end{remark}

\subsection{Comparison with previous results}
As we mentioned earlier Grassin \cite{Grassin} proved the global existence of smooth solutions if the initial data satisfies (i) $\inf\left\{\nabla_x\U_0(x):\xi\otimes\xi;\,\xi\in\mathbb{S}^{d-1},x\in\R^d\right\}\geq c_0$ for some $c_0>0$ and (ii) $\varrho_0^{\frac{\gamma-1}{2}}$ is small in Sobolev space $H^m$ for some $m>1+\frac{d}{2}$. When (i) holds, global solution $\overline{\U}$ exists for the Cauchy problem \eqref{eqn-Burgers-intro}. The proof is based on the energy estimate on $\U-\overline{\U}$ and $\varrho^{\frac{\gamma-1}{2}}$. This extends the previous results of Serre \cite{Serre} and Chemin \cite{Chemin}. We also refer to \cite{Serre-exp-vac} where author studied the smooth solution in negative time as well for finite mass and the solution is surrounded by vacuum. Global existence smooth solutions have been shown in \cite{Shkoller-Sideris} when initial velocity is close to a linear function in $\R^d$. In all these articles, it has been crucially used the strict positiveness of $\inf\limits_{\xi\in\mathbb{S}^{d-1},x\in\R^d}\nabla_x\U_0(x):\xi\otimes\xi$. In Theorem \ref{theorem:global-existence}, we relax this positiveness by non-negativeness. We could do this by restricting the support of initial density in the set when $\inf\limits_{\xi\in\mathbb{S}^{d-1}}\nabla_x\U_0(x):\xi\otimes\xi>0$.

Sideris \cite{Sideris-cmp} showed the finite time breakdown of smooth solution when initial data satisfies the integral condition \eqref{condition-Sideris}. As in Theorem \ref{theorem-cont} we show that finite time blow up happens even if we replace the $C^1$ solution by a continuous one. Note that when we talk about continuous solution it is no longer satisfying the equation in the classical way. We consider a weak solution which is continuous and satisfying the entropy inequality \eqref{ineq:entropy}. Theorem \ref{theorem-cont} not only extends the result of Sideris \cite{Sideris-cmp} for continuous entropy solutions, it also tells us that the $C^1$ solution considered in \cite{Sideris-cmp} loses its smoothness in finite time and besides, it fails to be continuous as well. Our result also tells that the continuous solutions constructed in \cite{GK} can not be global if it the data satisfies \eqref{condition-Sideris}.

In the spirit of Sideris \cite{Sideris-cmp}, blow up phenomenon holds for compressible Navier-Stokes \cite{Xin} equations and other hyperbolic systems \cite{Sideris-hyp} as well. When the solution is containing vacuum then blow up of $C^3$ solution has been proved in \cite{Chae-Ha} for irrotational and compressive initial velocity. It has been shown \cite{Sideris-ajm} that for class of data $\varrho_0=\overline{\varrho}+\e\tilde{\varrho}_0,\U_0=\e\tilde{\U}_0$ with small enough $\e>0$ it has been shown the life span of the smooth solution $T^*>C/\e$ for a given $C>0$.

In \cite{BuckShkVic}, authors have shown the shock formation for smooth solution to \eqref{eqn-isen-1}--\eqref{eqn-isen-2} in dimension two at time of order $\e$ when initial data is bounded but it has slope $-1/\e$ for some small enough $\e>0$. They have also shown the stability of the shock formation. In literature, Majda \cite{Majda-stability,Majda-existence} started the study of stability of shock formation.  Christodoulou \cite{Christodoulou} has proved the shock formation for irrotational initial velocity for relativistic Euler equations. We refer to \cite{Christodoulou-Miao} for the non-relativistic case. For 2-D isentropic Euler equations, shock formation has been studied in \cite{LS-Invent} for velocity with vorticity. We note that Theorem \ref{theorem:blow-up-C1-2} and \cite{BuckShkVic} can not be directly compared as \descref{Cond-1}{(ND)} put minimal restriction on initial velocity but Theorem \ref{theorem:blow-up-C1-2} requires the smallness assumption \eqref{smallness-Hm} whereas the results of \cite{BuckShkVic} need large enough negative slope but there is no restriction of type \eqref{smallness-Hm}.

Rest of the article is organized as follows. In the next section, we recall some basic results and notations. We establish energy estimates in section \ref{sec:energy-estimate}. We prove blow up of $C^1$ and continuous solutions in section \ref{sec:C1} and \ref{sec:break-down-cont} respectively. In the last section, we prove the result of global-in-time existence.

\section{Preliminaries}
In this article, we use the following notations.
\begin{itemize}
	\item The notation $\pa_\al^n$ stands for $\frac{d^n}{dx^{\al_1}_1dx^{\al_2}_2\cdots dx^{\al_d}_d}$ where $\al=(\al_1,\cdots,\al_d)$ and $\al_1+\al_2+\dots+\al_d=n$.
	\item For multi index $\al\in\N^n$, $\al=(\al_1,\cdots,\al_n)$, we use $\abs{\al}=\al_1+\cdots+\al_n$.
	\item For $\al=(\al_1,\cdots_n),\B=(\B_1,\cdots,\B_1),\tilde{\B}=(\tilde{\B}_1,\cdots,\tilde{\B}_1)\in\N^n$, we denote $c(\B,\tilde{\B};\al)$ as follows
	\begin{equation}\label{def:c:alpha}
		c(\B,\tilde{\B};\al)=\frac{\al_1! \al_2!\cdots\al_n!}{\B_1!\B_2!\cdots\B_n!\tilde{\B}_1!\tilde{\B}_2!\cdots\tilde{\B}_n!}\mbox{ where }\al_i=\B_i+\tilde{\B}_i\mbox{ for }1\leq i\leq n.
	\end{equation}
	\item We often use the following inequality which we refer as ``$\e$-inequality"
	\begin{equation}\label{inequality-eps}
		ab\leq \e a^2+\frac{1}{4\e}b^2\mbox{ for }a,b\in\R\mbox{ and }\e>0.
	\end{equation}
    \item Let $p>1,q>1$ and $a,b\geq0$, Then the following inequality holds,
    \begin{equation}\label{ineq:Young}
    	ab\leq \frac{a^p}{p}+\frac{b^q}{q}\quad\mbox{ if $p,q$ satisfy }\frac{1}{p}+\frac{1}{q}=1.
    \end{equation}
    This is known as `Young's inequality'. 
	\item By $W^{m,p}(\R^d)$ we denote the standard Sobolev spaces for $m\in \mathbb{N}$ and $p\geq1$. We use the notation $H^m=W^{m,2}$.
\end{itemize}

In the following lemma, we recall the equivalence of various norms in finite dimension.
\begin{lemma}\label{lemma:linear-algebra-1}
	Let $m\geq1,n\geq 1$ and $d\geq 1$. There exists $C_{m,n,d}>1$ such that the following holds,
	\begin{equation}\label{equiv-norm-linear-alg}
	\frac{1}{C_{m,n,d}}\sum\limits_{k=1}^{d}\abs{x_k}^n\leq 	\left(\sum\limits_{k=1}^{d}\abs{x_k}^m\right)^{\frac{n}{m}}\leq C_{m,n,d}\sum\limits_{k=1}^{d}\abs{x_k}^n\mbox{ for all }x=(x_1,\cdots,x_d)\in\R^d.
	\end{equation}
\end{lemma}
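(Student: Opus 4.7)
The statement is the standard equivalence of $\ell^n$- and $\ell^m$-type quantities on $\R^d$, so the approach will be via compactness or, equivalently, via power-mean inequalities. Both sides of \eqref{equiv-norm-linear-alg} are continuous functions of $x \in \R^d$ that are positively homogeneous of degree $n$, and both vanish if and only if $x = 0$. This homogeneity is what reduces the problem to a uniform two-sided bound on a compact set.

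The first step is to normalise. Given $x \neq 0$, set $t := \left(\sum_{k=1}^{d} |x_k|^m\right)^{1/m} > 0$ and $y := x/t$, so that $\sum_{k=1}^d |y_k|^m = 1$. Because $\sum_k |x_k|^n = t^n \sum_k |y_k|^n$ and $\left(\sum_k |x_k|^m\right)^{n/m} = t^n$, the desired inequality is equivalent to finding a constant $C_{m,n,d} > 1$ such that
\begin{equation*}
\frac{1}{C_{m,n,d}} \;\leq\; \sum_{k=1}^{d} |y_k|^n \;\leq\; C_{m,n,d}
\quad \text{for every } y \in S_m := \Bigl\{y \in \R^d : \sum_{k=1}^{d}|y_k|^m = 1\Bigr\}.
\end{equation*}

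The second step proves this bound. The set $S_m$ is compact (closed and bounded) in $\R^d$, and the map $y \mapsto \sum_{k=1}^{d} |y_k|^n$ is continuous and strictly positive on $S_m$, since it vanishes only at the origin, which is excluded. Therefore it attains a positive minimum $c^- > 0$ and a finite maximum $c^+ < \infty$ on $S_m$, and $C_{m,n,d} := \max\{1/c^-,\, c^+\}$ does the job. If one prefers explicit constants, the power-mean inequality supplies them directly: for $n \geq m$ one has $|y_k| \leq 1$ and hence $|y_k|^n \leq |y_k|^m$, giving $\sum_k |y_k|^n \leq 1$, while the power-mean inequality $(d^{-1}\sum_k |y_k|^m)^{1/m} \leq (d^{-1}\sum_k |y_k|^n)^{1/n}$ yields $\sum_k |y_k|^n \geq d^{1-n/m}$; the case $n \leq m$ is symmetric.

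There is no real obstacle here, since this is a finite-dimensional norm-equivalence statement; the only minor care is to keep track of the exponent $n$ on the outside (so that one really is comparing $\bigl(\|y\|_{\ell^m}\bigr)^n$ with $\|y\|_{\ell^n}^n$, which matches the asymmetric form of \eqref{equiv-norm-linear-alg}), and to record that the constant depends on $d$, $m$, $n$ only and not on $x$.
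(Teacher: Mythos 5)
Your proof is correct. The paper itself gives no argument for this lemma and simply refers the reader to Lax's \emph{Linear Algebra}, so there is no proof in the source to compare against; your reduction by homogeneity to the compact $\ell^m$-unit sphere followed by the extreme value theorem (with the power-mean refinement giving the explicit constants $1$ and $d^{1-n/m}$ when $n\ge m$, and symmetrically when $n\le m$) is the standard and complete argument for finite-dimensional norm equivalence. One small but worthwhile remark: for the trivial case $x=0$ both sides of \eqref{equiv-norm-linear-alg} vanish and the inequality holds for any constant, so restricting to $x\neq0$ in the normalisation step loses nothing, though it is worth saying so explicitly.
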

For proof of Lemma \ref{lemma:linear-algebra-1} we refer to \cite{Lax:linear-alg}. Next, we recall a classical result on embedding of Sobolev spaces.
\begin{lemma}\label{lemma:Morrey}[Morrey]
	Let $q>N$. Then we have the following continuous injection $W^{1,q}(\R^N)\subset L^\f(\R^N)$. Furthermore, for all $g\in W^{1,q}(\R^N)$,
	\begin{equation}
		\abs{g(z_1)-g(z_2)}\leq C\abs{z_1-z_2}^{\al}\norm{\nabla_x g}_{L^q(\R^N)}\mbox{ for a.e. }z_1,z_2\in\R^N,
	\end{equation}
where $C=C(q,N)$, $\al=1-\frac{1}{q}$ and it does not depend on $g$.
\end{lemma}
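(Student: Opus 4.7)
The plan is to follow the classical proof of Morrey's inequality based on averaged integral representations. First, by density of $C^\infty_c(\R^N)$ in $W^{1,q}(\R^N)$, it suffices to establish both the $L^\infty$ bound and the pointwise H\"older estimate for smooth compactly supported $g$ and then pass to the limit along a Cauchy sequence; the H\"older estimate will upgrade $L^q$ convergence to uniform convergence on compact sets, thereby producing the continuous representative required by the statement.

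The core step is a weighted averaged representation. For fixed $y\in\R^N$ and $z\neq y$, writing $\omega=(z-y)/|z-y|$, the fundamental theorem of calculus along the segment from $y$ to $z$ gives $g(z)-g(y)=\int_0^{|z-y|}\nabla g(y+s\omega)\cdot\omega\,ds$. Fixing a ball $B=B(y,r)$, averaging $|g(z)-g(y)|$ over $z\in B$ in spherical coordinates centered at $y$ and swapping the order of the radial integration variables yields
\begin{equation}
\frac{1}{|B|}\int_B |g(z)-g(y)|\,dz \leq C_N \int_B \frac{|\nabla g(w)|}{|w-y|^{N-1}}\,dw.
\end{equation}
Next I would apply H\"older's inequality \eqref{ineq:Young} with conjugate exponents $q$ and $q'=q/(q-1)$ to the right-hand side. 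The main technical point, and precisely where the hypothesis $q>N$ enters, is the convergence of the singular integral $\int_B|w-y|^{-(N-1)q'}\,dw$: the assumption $q>N$ forces $(N-1)q'<N$, so this integral converges and evaluates in polar coordinates to $C_{N,q}\,r^{N-(N-1)q'}$, whose $1/q'$-th power simplifies to $C\,r^{1-N/q}$. Combining these yields the mean-oscillation bound
\begin{equation}
\frac{1}{|B|}\int_B |g(z)-g(y)|\,dz \leq C_{N,q}\,r^{1-N/q}\,\norm{\nabla g}_{L^q(\R^N)}.
\end{equation}

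Finally I would convert this mean-oscillation estimate into the pointwise H\"older bound by the standard two-ball trick: for $z_1,z_2\in\R^N$ with $r=|z_1-z_2|$, choose a ball $B$ of radius comparable to $r$ containing both points, write $|g(z_1)-g(z_2)|\leq |g(z_1)-g_B|+|g(z_2)-g_B|$ with $g_B=|B|^{-1}\int_B g$, and apply the previous estimate at $z_1$ and $z_2$ to conclude. The continuous embedding $W^{1,q}(\R^N)\subset L^\infty(\R^N)$ then follows by decomposing $g(z)=g_B+(g(z)-g_B)$ for $B=B(z,1)$: the first term is controlled by $C\norm{g}_{L^q(\R^N)}$ via H\"older's inequality on the unit ball, and the second by $C\norm{\nabla g}_{L^q(\R^N)}$ via the mean-oscillation estimate just derived. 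The only genuinely delicate point in the whole argument is keeping track of the exponents in the singular integral so as to identify the precise exponent $q>N$ that makes it integrable; everything else is bookkeeping and a standard density argument.
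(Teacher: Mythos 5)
The paper does not actually prove this lemma; it simply cites Brezis, Theorem 9.12, and your argument is precisely the classical Morrey proof found there: the mean-oscillation bound via the averaged integral representation over a ball, H\"older's inequality with conjugate exponent $q'$, the observation that $q>N$ forces $(N-1)q'<N$ so the singular kernel $|w-y|^{-(N-1)}$ lies in $L^{q'}$ locally, and then the two-ball trick plus a density argument. Two small points worth fixing. First, at the step where you apply H\"older's inequality to $\int_B |\nabla g(w)|\,|w-y|^{-(N-1)}\,dw$ you cite \eqref{ineq:Young}, but that tag in this paper is Young's inequality $ab\le a^p/p+b^q/q$, not H\"older; the misreference is cosmetic but should be corrected. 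Second, and more substantively, your (correct) bookkeeping yields the H\"older exponent $\alpha=1-\frac{N}{q}$, whereas the lemma as stated in the paper writes $\alpha=1-\frac{1}{q}$; these agree only when $N=1$. The exponent in the paper's statement appears to be a typo --- the $1-\frac{N}{q}$ you derive is the correct one and is the one appearing in Brezis --- and it would strengthen your write-up to flag the discrepancy explicitly rather than silently reproducing the standard exponent.
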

For proof of Lemma \ref{lemma:Morrey} we refer to \cite[Theorem 9.12]{Brezis-book}.

\section{Energy estimates}\label{sec:energy-estimate}
The blow up of $C^1$ solution for a class of initial condition relies on an energy estimate in the spirit of \cite{Kato}. Here, we reproduce the regularity result of Kato \cite{Kato} for integer exponents. 
\begin{proposition}\label{proposition:reg}
	Let $(\varrho,\U)\in C^1([0,T^*)\times\R^d)$ be a solution of isentropic Euler system \eqref{eqn-isen-1}--\eqref{eqn-isen-2} corresponding to the initial data $(\varrho_0,\U_0)$. We further assume that $(\nabla_x^k\varrho_0^{\frac{\ga-1}{2}}, \nabla_x^k\U_0)\in H^m(\R^d)$ for some $m\in\mathbb{N}$ and $k\in\mathbb{N}\cup\{0\}$. Then, we have
	\begin{equation}
		(\nabla_x^k\varrho^{\frac{\ga-1}{2}}(t,\cdot), \nabla_x^k\U(t,\cdot))\in H^m(\R^d)\mbox{ for all }t\in[0,T^*).
	\end{equation}
\end{proposition}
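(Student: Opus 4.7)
The plan is to use the classical energy method for symmetric hyperbolic systems, in the style of Kato~\cite{Kato}. First I would rescale: defining $\tilde\pi$ proportional to $\varrho^{(\gamma-1)/2}$ with a $\gamma$-dependent constant chosen to symmetrize the pressure/continuity coupling, the isentropic Euler system \eqref{eqn-isen-1}--\eqref{eqn-isen-2} is transformed into the first-order symmetric hyperbolic system
\begin{equation*}
\partial_t\tilde\pi+\U\cdot\nabla_x\tilde\pi+c_\gamma\,\tilde\pi\,\dv_x\U=0,\qquad \partial_t\U+\U\cdot\nabla_x\U+c_\gamma\,\tilde\pi\,\nabla_x\tilde\pi=0,
\end{equation*}
with $c_\gamma=(\gamma-1)/2>0$.

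Next I would work with the energy
\begin{equation*}
E_{k,m}(t):=\sum_{k\leq|\alpha|\leq k+m}\int_{\R^d}\bigl(|\partial^\alpha\U(t,x)|^2+|\partial^\alpha\tilde\pi(t,x)|^2\bigr)\,dx,
\end{equation*}
applying $\partial^\alpha$ to each equation, multiplying respectively by $\partial^\alpha\tilde\pi$ and $\partial^\alpha\U$, integrating over $\R^d$ and summing over $\alpha$. Three kinds of contributions arise. (i) The pure transport terms, after integration by parts, yield a factor of $\dv_x\U$ times the integrand, bounded by $C\|\nabla_x\U\|_{L^\infty}E_{k,m}(t)$. (ii) The two highest-order pressure/continuity couplings $c_\gamma\int\tilde\pi\,\partial^\alpha(\dv_x\U)\,\partial^\alpha\tilde\pi\,dx$ and $c_\gamma\sum_j\int\tilde\pi\,\partial^\alpha\partial_j\tilde\pi\,\partial^\alpha U_j\,dx$ cancel exactly at top order after a further integration by parts, leaving only the residual $-c_\gamma\sum_j\int\partial_j\tilde\pi\,\partial^\alpha\tilde\pi\,\partial^\alpha U_j\,dx\leq C\|\nabla_x\tilde\pi\|_{L^\infty}E_{k,m}(t)$. (iii) The commutator terms $[\partial^\alpha,\U]\cdot\nabla_x\tilde\pi$, $[\partial^\alpha,\tilde\pi]\dv_x\U$ and their obvious analogues are controlled by the Kato--Ponce inequality
\begin{equation*}
\|[\partial^\alpha,f]g\|_{L^2}\leq C\bigl(\|\nabla_x f\|_{L^\infty}\|\partial^{|\alpha|-1}g\|_{L^2}+\|\partial^{|\alpha|}f\|_{L^2}\|g\|_{L^\infty}\bigr),
\end{equation*}
and contribute at most $C(\|\nabla_x\U\|_{L^\infty}+\|\nabla_x\tilde\pi\|_{L^\infty})E_{k,m}(t)$. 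Summing everything,
\begin{equation*}
\tfrac{d}{dt}E_{k,m}(t)\leq C_{m,k,\gamma}\bigl(\|\nabla_x\U(t)\|_{L^\infty}+\|\nabla_x\tilde\pi(t)\|_{L^\infty}\bigr)E_{k,m}(t).
\end{equation*}

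The hypothesis $(\varrho,\U)\in C^1([0,T^*)\times\R^d)$ makes the prefactor bounded on every compact subinterval of $[0,T^*)$, so Gr\"onwall's lemma propagates $E_{k,m}(0)<\infty$ to $E_{k,m}(t)<\infty$ for every $t\in[0,T^*)$, which is exactly the claim. The formal differentiations are rigorously justified by a standard Friedrichs mollification of the initial data, applying the argument to the smooth approximating solutions, and passing to the limit using uniqueness in the $C^1$ class. I expect the most delicate step to be the top-order cancellation in (ii); a minor additional ingredient appears when $k=0$, namely the zero-order term $\int\dv_x\U\,(|\U|^2+\tilde\pi^2)\,dx$, but it too is controlled by $\|\nabla_x\U\|_{L^\infty}E_{0,m}(t)$.
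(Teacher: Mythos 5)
Your overall strategy is the same as the paper's: symmetrize with $\pi=\sqrt{\tfrac{\gamma-1}{4\gamma}}\varrho^{(\gamma-1)/2}$, apply $\partial^\alpha$ for $k\leq|\alpha|\leq k+m$, form the $L^2$ energy, integrate by parts to extract the exact cancellation of the top-order pressure/continuity pair $c_\gamma\int\pi\,\dv_x\partial^\alpha\U\,\partial^\alpha\pi\,dx$, and close with Gr\"onwall once the lower-order remainders are bounded by $\bigl(\|\nabla_x\U\|_{L^\infty}+\|\nabla_x\pi\|_{L^\infty}\bigr)E_{k,m}$. The one genuine difference is how the Leibniz remainders are estimated. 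You invoke the Moser/Kato--Ponce commutator inequality directly, whereas the paper builds its own interpolation machinery: Lemma~\ref{lemma:interpol-1} (a Gagliardo--Nirenberg bound $\int|\nabla^2\psi|^4\lesssim\|\nabla\psi\|_{L^\infty}^2\int|\nabla^3\psi|^2$) and Lemma~\ref{lemma:interpol-2} (a mixed-product bound of $\int|\nabla^2\psi|^2|\nabla^3\phi|^2$ and companions by $\|\nabla\psi\|_{L^\infty},\|\nabla\phi\|_{L^\infty}$ and $\|\nabla^4\cdot\|_{L^2}$), both proved by integration by parts. These lemmas are precisely the special cases of what Kato--Ponce packages for general order, so the two routes are mathematically equivalent in substance. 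Your route is arguably cleaner for the general statement: the paper's Lemmas~\ref{lemma:interpol-1}--\ref{lemma:interpol-2} are written out only for derivative orders $3$ and $4$ and then quoted for the general sum $\sum_{j=1}^{n-1}\int|\nabla^j\pi|^2|\nabla^{n-j+1}\U|^2\,dx$ (and its companions), leaving the reader to supply the analogue for arbitrary $n$; the Kato--Ponce estimate delivers that analogue in one stroke. The paper's route, on the other hand, is self-contained and exposes exactly which integrations by parts are needed. Your remark about Friedrichs mollification to justify the formal differentiations and about the zero-order term when $k=0$ are both correct, and the latter is needed precisely when $k=0$ since for $k\geq 1$ the undifferentiated fields do not appear in $E_{k,m}$.
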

We note that the exponent of regularity in Proposition \ref{proposition:reg} are considered to be positive integers. We expect that it can be extended to the positive real numbers in a similar spirit as in \cite{Kato}.

The proof of Proposition \ref{proposition:reg} can be done in a similar manner as in \cite{Kato}. For sake of completeness, we provide here some explicit estimates which will help us in section \ref{sec:C1}. To prove the energy estimates we need the following results on interpolation of $W^{1,\f}$ and $W^{3,2}$. We refer to \cite{Adams} for more on interpolation of spaces.
\begin{lemma}\label{lemma:interpol-1}
	Let $\psi\in C^3_c(\R^d)$. Then the following holds,
	\begin{equation}\label{interpolation-inequality-1}
		\int\limits_{\R^d}\abs{\nabla_x^2\psi}^4\,dx\leq C_d\norm{\nabla_x\psi}^2_{L^\f(\R^d)}\int\limits_{\R^d}\abs{\nabla_x^3\psi}^2\,dx.
	\end{equation}
\end{lemma}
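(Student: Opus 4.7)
The plan is to prove Lemma 4.2 by a standard integration-by-parts argument applied one mixed partial at a time, then assemble the result via the norm-equivalence Lemma 3.2. The key observation is that each term $(\partial_{ij}\psi)^4$ can be rewritten, after integrating by parts in the $x_i$ variable, so that one factor of $\partial_{ij}\psi$ becomes $\partial_j\psi$ (producing the $L^\infty$ factor) while the differentiation is transferred onto the remaining three copies, producing one factor of $\partial_{iij}\psi$ (which feeds into the $L^2$ norm of $\nabla^3_x\psi$) and two copies of $\partial_{ij}\psi$ (which will be recovered by Cauchy--Schwarz).

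Concretely, for each pair of indices $1\leq i,j\leq d$ I would start from
\begin{equation*}
\int_{\R^d}(\partial_{ij}\psi)^4\,dx=\int_{\R^d}(\partial_{ij}\psi)^3\,\partial_i(\partial_j\psi)\,dx,
\end{equation*}
integrate by parts in $x_i$ (boundary terms vanish thanks to $\psi\in C^3_c(\R^d)$), and obtain
\begin{equation*}
\int_{\R^d}(\partial_{ij}\psi)^4\,dx=-3\int_{\R^d}(\partial_{ij}\psi)^2\,\partial_{iij}\psi\cdot\partial_j\psi\,dx.
\end{equation*}
Pulling $\|\partial_j\psi\|_{L^\infty}\leq\|\nabla_x\psi\|_{L^\infty}$ out of the integral and applying Cauchy--Schwarz to the remaining integrand $(\partial_{ij}\psi)^2\cdot|\partial_{iij}\psi|$ gives
\begin{equation*}
\int_{\R^d}(\partial_{ij}\psi)^4\,dx\leq 3\,\|\nabla_x\psi\|_{L^\infty(\R^d)}\left(\int_{\R^d}(\partial_{ij}\psi)^4\,dx\right)^{1/2}\left(\int_{\R^d}(\partial_{iij}\psi)^2\,dx\right)^{1/2}.
\end{equation*}
Dividing by the square root factor on the right (which I may assume is nonzero, else the inequality is trivial) and squaring yields
\begin{equation*}
\int_{\R^d}(\partial_{ij}\psi)^4\,dx\leq 9\,\|\nabla_x\psi\|_{L^\infty(\R^d)}^2\int_{\R^d}(\partial_{iij}\psi)^2\,dx\leq 9\,\|\nabla_x\psi\|_{L^\infty(\R^d)}^2\int_{\R^d}|\nabla_x^3\psi|^2\,dx.
\end{equation*}

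Finally, summing over the $d^2$ pairs $(i,j)$ and invoking Lemma 3.2 (the equivalence $\big(\sum_{i,j}a_{ij}^2\big)^2\leq C_d\sum_{i,j}a_{ij}^4$ with $a_{ij}=\partial_{ij}\psi(x)$ pointwise, then integrated in $x$) recovers $|\nabla_x^2\psi|^4$ on the left-hand side with a dimensional constant $C_d$. No obstacle is genuinely hard here; the only thing to watch is that the index $i$ chosen for the integration by parts must match the first index of $\partial_{ij}$, so that the transferred derivative lands on a second-derivative term yielding a third-order derivative of $\psi$, and that the compact support of $\psi$ kills the boundary terms. This gives the claimed inequality.
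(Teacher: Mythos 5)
Your proposal is correct and takes essentially the same approach as the paper: both start from the integration-by-parts identity that moves one derivative off $(\partial^2_{ij}\psi)^4$ to produce a factor of $\nabla_x\psi$ and a third-order derivative. The only cosmetic difference is the closing step --- you apply Cauchy--Schwarz in $L^2$ and divide out the common factor, whereas the paper applies the pointwise $\varepsilon$-inequality with $\varepsilon=1/2$ and absorbs the resulting half of $\sum_{i,j}\int|\partial^2_{ij}\psi|^4$ into the left-hand side --- and these two devices are equivalent here.
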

\begin{proof}
	Throughout the proof we use $C_d$ as a generic constant depending only on dimension $d$. By using integration by parts, we can see that
	\begin{align*}
		\sum\limits_{i,j=1}^{d}\int\limits_{\R^d}\abs{\pa^2_{ij}\psi}^4\,dx&=\sum\limits_{i,j=1}^{d}\int\limits_{\R^d}\pa^2_{ij}\psi(\pa^2_{ij}\psi)^3\,dx\\
		&=-C_d\sum\limits_{i,j=1}^{d}\int\limits_{\R^d}\pa_{i}\psi(3\pa^2_{ij}\psi)^2\pa^3_{ijj}\psi\,dx.
	\end{align*}
Applying $\e$-inequality \eqref{inequality-eps} with $\e=1/2$ we obtain,
	\begin{align*}		\\
			\sum\limits_{i,j=1}^{d}\int\limits_{\R^d}\abs{\pa^2_{ij}\psi}^4\,dx&\leq \frac{1}{2}	\sum\limits_{i,j=1}^{d}\int\limits_{\R^d}\abs{\pa^2_{ij}\psi}^4\,dx+C_d	\sum\limits_{i,j=1}^{d}\int\limits_{\R^d}\norm{\pa_{i}\psi}_{L^\f(\R^d)}^2\abs{\pa^3_{ijj}\psi}^2\,dx\\
		&\leq \frac{1}{2}	\sum\limits_{i,j=1}^{d}\int\limits_{\R^d}\abs{\pa^2_{ij}\psi}^4\,dx+C_d	\norm{\nabla_x\psi}^2_{L^\f(\R^d)}\int\limits_{\R^d}\abs{\nabla_x^3\psi}^2\,dx.
	\end{align*}
	Hence, we have
	\begin{equation*}
		\sum\limits_{i,j=1}^{d}\int\limits_{\R^d}\abs{\pa^2_{ij}\psi}^4\,dx\leq C_d	\norm{\nabla_x\psi}^2_{L^\f(\R^d)}\int\limits_{\R^d}\abs{\nabla_x^3\psi}^2\,dx.
	\end{equation*}
	By using the inequality \eqref{equiv-norm-linear-alg} we conclude the result \eqref{interpolation-inequality-1}.
\end{proof}

In the next result, we estimate $\norm{\nabla_x^2\phi\nabla_x^3\psi}_{L^2}$ in terms of $\norm{\nabla_x^4\psi}_{L^2},\norm{\nabla_x^4\phi}_{L^2}$ and the $W^{1,\f}$ norm of $\psi,\phi$. It plays an important role in derivation of energy estimates.
\begin{lemma}\label{lemma:interpol-2} Let $\phi,\psi\in C^4_{c}(\R^d),\,d\geq1$. Then the following holds,
	\begin{align}
		&\int\limits_{\R^d}\left[\abs{\nabla_x^2\psi}^2\abs{\nabla_x^3\psi}^2+\abs{\nabla_x^2\phi}^2\abs{\nabla_x^3\phi}^2\right]\,dx+\int\limits_{\R^d}\left[\abs{\nabla_x^2\psi}^2\abs{\nabla_x^3\phi}^2+\abs{\nabla_x^2\phi}^2\abs{\nabla_x^3\psi}^2\right]\,dx\nonumber\\
		&\leq C_{d}\left(1+\norm{\nabla_x\phi}_{L^\f(\R^d)}+\norm{\nabla_x\psi}_{L^\f(\R^d)}\right)^3\int\limits_{\R^d}\abs{\nabla_x^4\phi}^2+\abs{\nabla_x^4\psi}^2\,dx.\label{ineq-lemma-2}
	\end{align}
\end{lemma}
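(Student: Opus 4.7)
The plan is to estimate each of the four integrals on the left-hand side separately, using the same integration-by-parts (IBP) strategy as in Lemma \ref{lemma:interpol-1}, combined with H\"older's and Young's inequalities and a scale-invariant Gagliardo--Nirenberg interpolation.

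For the diagonal piece $I_\psi:=\int_{\R^d}\abs{\nabla_x^2\psi}^2\abs{\nabla_x^3\psi}^2\,dx$, I would write $\pa^2_{ij}\psi=\pa_i(\pa_j\psi)$ and integrate by parts in $\pa_i$ (Einstein summation, no boundary terms since $\psi\in C^4_c(\R^d)$), obtaining
\begin{equation*}
	I_\psi = -\int_{\R^d}\pa_j\psi\,\pa^3_{iij}\psi\,(\pa^3_{klm}\psi)^2\,dx - 2\int_{\R^d}\pa_j\psi\,\pa^2_{ij}\psi\,\pa^3_{klm}\psi\,\pa^4_{iklm}\psi\,dx.
\end{equation*}
Bounding $\abs{\pa_j\psi}$ by $\norm{\nabla_x\psi}_{L^\f(\R^d)}$ and applying Cauchy--Schwarz, the second integral is controlled by $2\sqrt{d}\,\norm{\nabla_x\psi}_{L^\f}\norm{\nabla_x^4\psi}_{L^2}I_\psi^{1/2}$; the $\e$-inequality \eqref{inequality-eps} then absorbs $\tfrac{1}{2}I_\psi$ into the left-hand side and leaves a contribution bounded by $C_d\norm{\nabla_x\psi}^2_{L^\f}\norm{\nabla_x^4\psi}^2_{L^2}$. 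Using $\sum_i\pa^3_{iij}\psi=\Delta\pa_j\psi$ and $\abs{\Delta\nabla_x\psi}\leq\sqrt{d}\,\abs{\nabla_x^3\psi}$, the first integral is bounded by $\sqrt{d}\,\norm{\nabla_x\psi}_{L^\f}\int_{\R^d}\abs{\nabla_x^3\psi}^3\,dx$, and the crucial input here is the scale-invariant Gagliardo--Nirenberg interpolation
\begin{equation}\label{GN-interp-3psi}
	\int_{\R^d}\abs{\nabla_x^3\psi}^3\,dx \leq C_d\,\norm{\nabla_x\psi}_{L^\f(\R^d)}\,\norm{\nabla_x^4\psi}^2_{L^2(\R^d)},
\end{equation}
obtained by applying Gagliardo--Nirenberg to the vector field $\nabla_x\psi$ with exponents $(j,m,p,q,r)=(2,3,3,2,\f)$ and boundary interpolation parameter $a=2/3$. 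Combining both estimates gives $I_\psi\leq C_d\norm{\nabla_x\psi}^2_{L^\f}\norm{\nabla_x^4\psi}^2_{L^2}$, and the bound for $\int\abs{\nabla_x^2\phi}^2\abs{\nabla_x^3\phi}^2\,dx$ is obtained identically by symmetry.

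For the cross term $M_1:=\int_{\R^d}\abs{\nabla_x^2\psi}^2\abs{\nabla_x^3\phi}^2\,dx$, the same IBP on $\pa^2_{ij}\psi$ produces one piece bounded by $2\sqrt{d}\norm{\nabla_x\psi}_{L^\f}\norm{\nabla_x^4\phi}_{L^2}M_1^{1/2}$ (absorbed by Young as before) and one controlled by $\sqrt{d}\,\norm{\nabla_x\psi}_{L^\f}\int_{\R^d}\abs{\nabla_x^3\psi}\abs{\nabla_x^3\phi}^2\,dx$. H\"older with exponents $(3,3/2)$ and two applications of \eqref{GN-interp-3psi} dominate the latter by $C_d\norm{\nabla_x\psi}^{1/3}_{L^\f}\norm{\nabla_x\phi}^{2/3}_{L^\f}\norm{\nabla_x^4\psi}^{2/3}_{L^2}\norm{\nabla_x^4\phi}^{4/3}_{L^2}$. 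Young's inequality \eqref{ineq:Young} with conjugates $(3,3/2)$ on the last two factors produces $\tfrac{1}{3}\norm{\nabla_x^4\psi}^2_{L^2}+\tfrac{2}{3}\norm{\nabla_x^4\phi}^2_{L^2}$, while the elementary bound $\norm{\nabla_x\psi}^{4/3}_{L^\f}\norm{\nabla_x\phi}^{2/3}_{L^\f}\leq(\norm{\nabla_x\psi}_{L^\f}+\norm{\nabla_x\phi}_{L^\f})^2$ finishes the estimate. The exchanged cross term $\int\abs{\nabla_x^2\phi}^2\abs{\nabla_x^3\psi}^2\,dx$ is handled symmetrically, and summing all four contributions produces \eqref{ineq-lemma-2}; the cubic factor $(1+\norm{\nabla_x\phi}_{L^\f}+\norm{\nabla_x\psi}_{L^\f})^3$ appears as a convenient overestimate of the quadratic factor actually produced.

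The main technical obstacle is the scale-invariant interpolation \eqref{GN-interp-3psi}. Since the parameter $a=2/3$ sits exactly at the endpoint $j/m$ of the admissibility range in Nirenberg's classical theorem, the borderline dimension $d=2$ (where $m-j-d/q$ is a nonnegative integer) is excluded from the direct statement; in that case \eqref{GN-interp-3psi} must be established by a separate argument, for instance via a Littlewood--Paley decomposition of $\nabla_x\psi$ or through an equivalent formulation in the homogeneous Besov space $\dot B^{2}_{3,\f}(\R^2)$. Once \eqref{GN-interp-3psi} is in hand, the remainder of the argument is a routine chain of IBP, H\"older, and Young inequalities.
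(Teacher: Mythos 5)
Your argument is correct, and the four-way decomposition plus the initial integration-by-parts step are identical to the paper's. The genuine difference is in how the resulting cubic term $\int_{\R^d}\abs{\nabla_x^3\psi}^3\,dx$ is treated: you invoke the scale-invariant Gagliardo--Nirenberg inequality \eqref{GN-interp-3psi} as an external black box, whereas the paper stays self-contained. It integrates $\int\bigl(\sum(\pa^3_{ijk}\psi)^2\bigr)^{3/2}\,dx$ by parts once more, obtains a bound of the form $\e\,C_d\int\abs{\nabla_x^2\psi}^2\abs{\nabla_x^3\psi}^2\,dx+C_\e C_d\int\abs{\nabla_x^4\psi}^2\,dx$, feeds this back into the bounds for all four integrals $\mathcal{I}_1,\dots,\mathcal{I}_4$ simultaneously, and then closes the argument by a bootstrap: choose $\e$ small enough that the $\mathcal{I}_3+\mathcal{I}_4$ contribution is absorbed by the left-hand side of the combined system. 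What the paper's route buys is that nothing beyond elementary IBP, Cauchy--Schwarz, Young, and the $\e$-inequality is used; what your route buys is brevity and a slightly cleaner final power of $M=1+\norm{\nabla_x\phi}_{L^\f}+\norm{\nabla_x\psi}_{L^\f}$. Your worry about the $d=2$ borderline in \eqref{GN-interp-3psi} is overcautious: the estimate $\norm{\nabla_x^3\psi}_{L^3}\leq C_d\norm{\nabla_x\psi}_{L^\f}^{1/3}\norm{\nabla_x^4\psi}_{L^2}^{2/3}$ at $a=j/m$ with $q=\f$ is classical in every dimension, and in any case can be proved directly by exactly the IBP-plus-absorption device the paper uses for Lemma \ref{lemma:interpol-1} and inside the present proof, so no Littlewood--Paley argument is needed to patch $d=2$.
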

\begin{proof} 
	Applying integration by part, we get
	\begin{align}
		\sum\limits_{i,j,k,l,r=1}^{d}\int\limits_{\R^d}\abs{\pa^2_{ij}\psi}^2\abs{\pa^3_{klr}\phi}^2\,dx
		&=-	\sum\limits_{i,j,k,l,r=1}^{d}\int\limits_{\R^d}\pa_j\psi\pa^3_{iij}\psi\abs{\pa^3_{klr}\phi}^2\,dx\nonumber\\
		&-2	\sum\limits_{i,j,k,l,r=1}^{d}\int\limits_{\R^d}\pa_j\psi\pa^2_{ij}\psi\pa^3_{klr}\phi\pa^4_{iklr}\phi\,dx.\label{ineq-1}
	\end{align}
	By using the relation between finite dimensional norms \eqref{equiv-norm-linear-alg}, we see
	\begin{equation}
		\frac{1}{C_{d}}\int\limits_{\R^d}\left(\sum\limits_{i,j,k=1}^{d}(\pa^3_{ijk}\psi)^2\right)^{3/2}\,dx\leq \int\limits_{\R^d}\abs{\nabla_x^3\psi}^3\,dx\leq {C_{d}}\int\limits_{\R^d}\left(\sum\limits_{i,j,k=1}^{d}(\pa^3_{ijk}\psi)^2\right)^{3/2}\,dx, \label{equiv-norm-1}
	\end{equation}
    where $C_d$ is a generic constant depending only on dimension $d$. With the help of \eqref{equiv-norm-1}, Young's inequality \eqref{ineq:Young} and $\e$-inequality, from \eqref{ineq-1}, we obtain
	\begin{align}
		\sum\limits_{i,j,k,l,r=1}^{d}\int\limits_{\R^d}\abs{\pa^2_{ij}\psi}^2\abs{\pa^3_{klr}\phi}^2\,dx\leq &C_d\left(1+\norm{\nabla_x\psi}_{L^\f(\R^d)}\right)^2  \int\limits_{\R^d}\left[\abs{\nabla_x^3\psi}^3+\abs{\nabla_x^3\phi}^3\right]\,dx\nonumber\\
		&+C_d\left(1+\norm{\nabla_x\psi}_{L^\f(\R^d)}\right)^2\int\limits_{\R^d}\abs{\nabla_x^4\psi}^2\,dx.\label{ineq-2}
	\end{align}
	Using integration by parts, we have,
	\begin{align}
		&\int\limits_{\R^d}\left(\sum\limits_{i,j,k=1}^{d}(\pa^3_{ijk}\psi)^2\right)^{3/2}\,dx\nonumber\\
		&=\sum\limits_{i^\p,j^\p,k^\p=1}^{d}\int\limits_{\R^d}\left(\sum\limits_{i,j,k=1}^{d}(\pa^3_{ijk}\psi)^2\right)^{1/2}(\pa^3_{i^\p j^\p k^\p}\psi)^2\,dx\nonumber\\
		&=-\sum\limits_{i^\p,j^\p,k^\p=1}^{d}\int\limits_{\R^d}\left(\sum\limits_{i,j,k=1}^{d}(\pa^3_{ijk}\psi)^2\right)^{1/2}\pa^2_{ j^\p k^\p}\psi\pa^4_{i^\p i^\p j^\p k^\p}\psi\,dx\nonumber\\
		&-\sum\limits_{i^\p,j^\p,k^\p=1}^{d}\sum\limits_{i,j,k=1}^{d}\int\limits_{\R^d}\left(\sum\limits_{i,j,k=1}^{d}(\pa^3_{ijk}\psi)^2\right)^{-1/2}\pa^2_{ j^\p k^\p}\psi\pa^3_{i^\p j^\p k^\p}\psi\pa^4_{i^\p i j k}\psi\pa^3_{ijk}\psi\,dx.
	\end{align}
	Now, with the help of $\e$-inequality \eqref{inequality-eps}, we get
	\begin{align}
		&\int\limits_{\R^d}\left(\sum\limits_{i,j,k=1}^{d}(\pa^3_{ijk}\psi)^2\right)^{3/2}\,dx\nonumber\\
		&\leq \e \sum\limits_{i^\p,j^\p,k^\p=1}^{d}\sum\limits_{i,j,k=1}^{d}\int\limits_{\R^d}(\pa^3_{ijk}\psi)^2 (\pa^2_{ j^\p k^\p}\psi)^2\,dx+C_\e \sum\limits_{i^\p,j^\p,k^\p=1}^{d}\int\limits_{\R^d}\abs{\pa^4_{i^\p i^\p j^\p k^\p}\psi}^2\,dx\nonumber\\
		&+C_{d}\sum\limits_{i^\p,j^\p,k^\p,i=1}^{d}\int\limits_{\R^d}\left(\sum\limits_{i,j,k=1}^{d}(\pa^3_{ijk}\psi)^2\right)^{1/2}\abs{\pa^2_{ j^\p k^\p}\psi}\abs{\pa^4_{i^\p i j k}\psi}\,dx.
	\end{align}
	Again by similar argument, we have
	\begin{align}
		&\int\limits_{\R^d}\left(\sum\limits_{i,j,k=1}^{d}(\pa^3_{ijk}\psi)^2\right)^{3/2}\,dx\nonumber\\
		&\leq \e \sum\limits_{i^\p,j^\p,k^\p=1}^{d}\sum\limits_{i,j,k=1}^{d}\int\limits_{\R^d}(\pa^3_{ijk}\psi)^2 (\pa^2_{ j^\p k^\p}\psi)^2\,dx+C_\e \sum\limits_{i^\p,j^\p,k^\p=1}^{d}\int\limits_{\R^d}\abs{\pa^4_{i^\p i^\p j^\p k^\p}\psi}^2\,dx\nonumber\\
		&+\e C_{d} \int\limits_{\R^d}\left(\sum\limits_{i,j,k=1}^{d}(\pa^3_{ijk}\psi)^2\right)\left(\sum\limits_{i,j=1}^{d}(\pa^2_{ij}\psi)^2\right)\,dx+C_{\e,d}\int\limits_{\R^d} \abs{\nabla^4_x\psi}\,dx,\nonumber
	\end{align}
	or equivalently,
	\begin{equation}\label{ineq-psi}
		\int\limits_{\R^d}\abs{\nabla_x^3\psi}^3\,dx\leq \e C_{d}\int\limits_{\R^d}\abs{\nabla^3_x\psi}^2\abs{\nabla^2_x\psi}^2\,dx+C_\e C_{d}\int\limits_{\R^d}\abs{\nabla_x^4\psi}^2\,dx.
	\end{equation}
	Similarly, we have
	\begin{equation}\label{ineq-phi}
		\int\limits_{\R^d}\abs{\nabla_x^3\phi}^3\,dx\leq \e C_{d}\int\limits_{\R^d}\abs{\nabla^3_x\phi}^2\abs{\nabla^2_x\phi}^2\,dx+C_\e C_{d}\int\limits_{\R^d}\abs{\nabla_x^4\phi}^2\,dx.
	\end{equation}
	Let's define $\mathcal{I}_{i},\,1\leq i\leq4$, as follows,
	\begin{align*}
		\mathcal{I}_1&:=\sum\limits_{i,j,k,l,r=1}^{d}\int\limits_{\R^d}\abs{\pa^2_{ij}\psi}^2\abs{\pa^3_{klr}\phi}^2\,dx,\quad\quad \mathcal{I}_2:=\sum\limits_{i,j,k,l,r=1}^{d}\int\limits_{\R^d}\abs{\pa^2_{ij}\phi}^2\abs{\pa^3_{klr}\psi}^2\,dx,\\
		\mathcal{I}_3&:=\sum\limits_{i,j,k,l,r=1}^{d}\int\limits_{\R^d}\abs{\pa^2_{ij}\psi}^2\abs{\pa^3_{klr}\psi}^2\,dx,\quad\quad	\mathcal{I}_4:=\sum\limits_{i,j,k,l,r=1}^{d}\int\limits_{\R^d}\abs{\pa^2_{ij}\phi}^2\abs{\pa^3_{klr}\phi}^2\,dx.
	\end{align*}
	We also set $M:=1+\norm{\nabla_x\psi}_{L^\f(\R^d)}+\norm{\nabla_x\phi}_{L^\f(\R^d)}$. Combining \eqref{ineq-psi} and \eqref{ineq-phi} with \eqref{ineq-2}, we obtain
	\begin{equation}\label{est-I1}
		\mathcal{I}_1\leq \e C_{d}M^2\left[\mathcal{I}_3+\mathcal{I}_4\right]+C_{\e}C_{d}M^2\int\limits_{\R^d}\left[\abs{\nabla_x^4\psi}^2+\abs{\nabla_x^4\phi}^2\right]\,dx.
	\end{equation}
	By a similar argument, we have
	\begin{equation}\label{est-I2}
		\mathcal{I}_k\leq \e C_{d}M^2\left[\mathcal{I}_3+\mathcal{I}_4\right]+C_{\e}C_{d}M^2\int\limits_{\R^d}\left[\abs{\nabla_x^4\psi}^2+\abs{\nabla_x^4\phi}^2\right]\,dx\mbox{ for }k=2,3,4.
	\end{equation}
	Finally, we combine \eqref{est-I1}, \eqref{est-I2} and then choosing appropriate $\e$, we obtain \eqref{ineq-lemma-2}.
\end{proof}
\subsection{Proof of Proposition \ref{proposition:reg}}
With the help of Lemma \ref{lemma:interpol-1} and \ref{lemma:interpol-2}, we are going to prove Proposition \ref{proposition:reg}. Proof also relies on the symmetrization of \eqref{eqn-isen-1}--\eqref{eqn-isen-2} in the spirit of \cite{KMU}. The method is similar to the one in \cite{Kato}. Here, we can reduce the regularity exponent even smaller than $1+\frac{d}{2}$ due to our assumption on $C^1$ of $\varrho$ and $\U$.
\begin{proof}[Proof of Proposition \ref{proposition:reg}]

We consider the following symmetrization of \eqref{eqn-isen-1}--\eqref{eqn-isen-2} in the spirit of \cite{KMU}
\begin{align}
	\pa_t\pi+\U\cdot\nabla_x\pi+C_1\pi\dv_x\U&=0,\\
	\pa_t\U+\U\cdot\nabla_x\U+C_1\pi\nabla_x\pi&=0,
\end{align}	
where $C_1=\frac{\gamma-1}{2}$ and $\pi=\sqrt{\frac{\gamma-1}{4\gamma}}\varrho^\frac{\gamma-1}{2}$. Applying derivative $\pa_\al^n$ for $\al=(\al_1,\cdots,\al_d)$ with $\al_1+\dots+\al_d=n$ we obtain,
\begin{align}
	\pa_t\pa^{n}_\al\pi+\sum\limits_{\B_1+\B_2=\al}c(\B_1,\B_2;\al)\pa^{\abs{\B_1}}_{\B_1}\U\cdot\nabla_x\pa^{\abs{\B_2}}_{\B_2}\pi+C_1\sum\limits_{\B_1+\B_2=\al}c(\B_1,\B_2;\al)\pa^{\abs{\B_1}}_{\B_1}\pi\dv_x\pa^{\abs{\B_2}}_{\B_2}\U&=0,\label{eq-dn-pi}\\
	\pa_t\pa^{n}_{\al}\U+\sum\limits_{\B_1+\B_2=\al}c(\B_1,\B_2;\al)\pa^{\abs{\B_1}}_{\B_1}\U\cdot\nabla_x\pa^{\abs{\B_2}}_{\B_2}\U+C_1\sum\limits_{\B_1+\B_2=\al}c(\B_1,\B_2;\al)\pa^{\abs{\B_1}}_{\B_1}\pi\nabla_x\pa^{\abs{\B_2}}_{\B_2}\pi&=0,\label{eq-dn-u}
\end{align}	
where $c(\B_1,\B_2;\al)$ is as in \eqref{def:c:alpha}. We rearrange \eqref{eq-dn-pi} as follows
\begin{align}
	\pa_t\pa^{n}_\al\pi
	&=-(\U\cdot\nabla_x\pa^{n}_{\al}\pi+\pa^{n}_{\al}\U\cdot\nabla_x\pi)-\sum\limits_{\B_1+\B_2=\al,\abs{\B_1}\geq1,\abs{\B_2}\geq1}c(\B_1,\B_2;\al)\pa^{\abs{\B_1}}_{\B_1}\U\cdot\nabla_x\pa^{\abs{\B_2}}_{\B_2}\pi\nonumber\\
	&-C_1(\pi\dv_x\pa^{n}_{\al}\U+\pa^{n}_{\al}\pi\dv_x\U)-C_1\sum\limits_{\B_1+\B_2=\al,\abs{\B_1}\geq1,\abs{\B_2}\geq1}c(\B_1,\B_2;\al)\pa^{\abs{\B_1}}_{\B_1}\pi\dv_x\pa^{\abs{\B_2}}_{\B_2}\U.\label{eq:dn-pi-1}
\end{align}
Multiplying \eqref{eq:dn-pi-1} with $\pa^n_\al\pi$ and then integrating over $\R^d$ we get
\begin{align}
	\frac{d}{dt}\int\limits_{\R^d}\abs{\pa^n_{\al}\pi}^2&\leq C_{d,n}\left(1+\norm{\nabla_x\U(t,\cdot)}_{L^\f(\R^d)}+\norm{\nabla_x\pi(t,\cdot)}_{L^\f(\R^d)}\right)\int\limits_{\R^d}\left[\abs{\nabla_x^n\pi}^2+\abs{\nabla_x^n\U}^2\right]\,dx\nonumber\\\
	&+C_{d,n}\sum\limits_{k=1}^{n-1}\int\limits_{\R^d}\left[\abs{\nabla_x^k\pi}^2\abs{\nabla_x^{n-k+1}\U}^2\,dx+\abs{\nabla_x^k\U}^2\abs{\nabla_x^{n-k+1}\pi}^2\right]\,dx-C_1\int\limits_{\R^d}\pi\dv_x\pa^n_{\al}\U\pa^n_{\al}\pi.\label{eq:dn-pi-2}
\end{align}
We arrange \eqref{eq-dn-u} to have
\begin{align}
		\pa_t\pa^{n}_{\al}\U&=-(\U\cdot\nabla_x\pa^{n}_{\al}\U+\pa^{n}_{\al}\U\cdot\nabla_x\U)-\sum\limits_{\B_1+\B_2=\al,\abs{\B_1}\geq1,\abs{\B_2}\geq1}c(\B_1,\B_2;\al)\pa^{\abs{\B_1}}_{\B_1}\U\cdot\nabla_x\pa^{\abs{\B_2}}_{\B_2}\U\nonumber\\
	&-C_1(\pi\nabla_x\pa^{n}_{\al}\pi+\pa^{n}_{\al}\pi\nabla_x\pi)-C_1\sum\limits_{\B_1+\B_2=\al,\abs{\B_1}\geq1,\abs{\B_2}\geq1}c(\B_1,\B_2;\al)\pa^{\abs{\B_1}}_{\B_1}\pi\nabla_x\pa^{\abs{\B_2}}_{\B_2}\pi,\label{eq:dn-u-1}
\end{align}
where $c(\al_1,\B_2;\al)$ is as in \eqref{def:c:alpha}. Multiplying \eqref{eq:dn-u-1} with $\pa^n_\al\U$ and then integrating over $\R^d$ we obtain
\begin{align}
	\frac{d}{dt}\int\limits_{\R^d}\abs{\pa^n_{\al}\U}^2&\leq C_{d,n}\left(1+\norm{\nabla_x\U(t,\cdot)}_{L^\f(\R^d)}+\norm{\nabla_x\pi(t,\cdot)}_{L^\f(\R^d)}\right)\int\limits_{\R^d}\left[\abs{\nabla_x^n\pi}^2+\abs{\nabla_x^n\U}^2\right]\,dx\nonumber\\
	&+C_{d,n}\sum\limits_{k=1}^{n-1}\int\limits_{\R^d}\left[\abs{\nabla_x^k\pi}^2\abs{\nabla_x^{n-k+1}\pi}^2\,dx+\abs{\nabla_x^k\U}^2\abs{\nabla_x^{n-k+1}\U}^2\right]\,dx-C_1\int\limits_{\R^d}\pi\nabla_x\pa^n_{\al}\pi\cdot\pa^n_{\al}\U.
\end{align}
Applying integration by parts we get
\begin{align}
	\frac{d}{dt}\int\limits_{\R^d}\abs{\pa^n_{\al}\U}^2&\leq C_{d,n}\left(1+\norm{\nabla_x\U(t,\cdot)}_{L^\f(\R^d)}+\norm{\nabla_x\pi(t,\cdot)}_{L^\f(\R^d)}\right)\int\limits_{\R^d}\left[\abs{\nabla_x^n\pi}^2+\abs{\nabla_x^n\U}^2\right]\,dx\nonumber\\
	&+C_{d,n}\sum\limits_{k=1}^{n-1}\int\limits_{\R^d}\left[\abs{\nabla_x^k\pi}^2\abs{\nabla_x^{n-k+1}\pi}^2\,dx+\abs{\nabla_x^k\U}^2\abs{\nabla_x^{n-k+1}\U}^2\right]\,dx\nonumber\\
	&+C_1\int\limits_{\R^d}\pa^n_{\al}\pi\nabla_x\pi\cdot\pa^n_{\al}\U\,dx+C_1\int\limits_{\R^d}\pi\dv_x\pa^n_{\al}\U\pa^n_{\al}\pi\,dx.\nonumber
\end{align}
We further estimate,
\begin{align}
	\frac{d}{dt}\int\limits_{\R^d}\abs{\pa^n_{\al}\U}^2&\leq  C_{d,n}\left(1+\norm{\nabla_x\U(t,\cdot)}_{L^\f(\R^d)}+\norm{\nabla_x\pi(t,\cdot)}_{L^\f(\R^d)}\right)\int\limits_{\R^d}\left[\abs{\nabla_x^n\pi}^2+\abs{\nabla_x^n\U}^2\right]\,dx\nonumber\\
	&+C_{d,n}\sum\limits_{k=1}^{n-1}\int\limits_{\R^d}\left[\abs{\nabla_x^k\pi}^2\abs{\nabla_x^{n-k+1}\pi}^2\,dx+\abs{\nabla_x^k\U}^2\abs{\nabla_x^{n-k+1}\U}^2\right]\,dx\nonumber\\
	&+C_1\int\limits_{\R^d}\pi\pa^n_{\al}\pi\dv_x\pa^n_{\al}\U\,dx.\label{eq:dn-u-2}
\end{align}
Combining \eqref{eq:dn-pi-2}, \eqref{eq:dn-u-2} along with the help of \eqref{equiv-norm-linear-alg} we have
\begin{align}
	&\frac{d}{dt}\int\limits_{\R^d}\left[\abs{\nabla_x^n\U}^2+\abs{\nabla_x^n\pi}^2\right]\,dx\nonumber\\
	&\leq C_{d,n}\left(1+\norm{\nabla_x\U(t,\cdot)}_{L^\f(\R^d)}+\norm{\nabla_x\pi(t,\cdot)}_{L^\f(\R^d)}\right)\int\limits_{\R^d}\left[\abs{\nabla_x^n\pi}^2+\abs{\nabla_x^n\U}^2\right]\,dx\nonumber\\
	&+C_{d,n}\sum\limits_{k=1}^{n-1}\int\limits_{\R^d}\left[\abs{\nabla_x^k\pi}^2\abs{\nabla_x^{n-k+1}\pi}^2\,dx+\abs{\nabla_x^k\U}^2\abs{\nabla_x^{n-k+1}\U}^2\right]\,dx\nonumber\\
	&+C_{d,n}\sum\limits_{k=1}^{n-1}\int\limits_{\R^d}\left[\abs{\nabla_x^k\pi}^2\abs{\nabla_x^{n-k+1}\U}^2\,dx+\abs{\nabla_x^k\U}^2\abs{\nabla_x^{n-k+1}\pi}^2\right]\,dx.
\end{align}
By using Lemma \ref{lemma:interpol-1} and \ref{lemma:interpol-2} with appropriate choice of $\psi,\phi$, we obtain
\begin{align}
	&\frac{d}{dt}\int\limits_{\R^d}\left[\abs{\nabla_x^n\U}^2+\abs{\nabla_x^n\pi}^2\right]\,dx\nonumber\\
	&\leq C_{d,n}\left(1+\norm{\nabla_x\U(t,\cdot)}_{L^\f(\R^d)}+\norm{\nabla_x\pi(t,\cdot)}_{L^\f(\R^d)}\right)\int\limits_{\R^d}\left[\abs{\nabla_x^n\pi}^2+\abs{\nabla_x^n\U}^2\right]\,dx.\nonumber
\end{align}
By using Gr\"onwall's inequality, we get
\begin{equation}\label{energy-est-1}
	\int\limits_{\R^d}\left[\abs{\nabla_x^n\U}^2+\abs{\nabla_x^n\pi}^2\right]\,dx\leq e^{M(t)}\int\limits_{\R^d}\left[\abs{\nabla_x^n\U_0}^2+\abs{\nabla_x^n\pi_0}^2\right]\,dx,
\end{equation}
where $M(t)$ is defined as
\begin{equation}
	 M(t):=\int\limits_{0}^{t}C_{d,n}(1+\norm{\nabla_x\U(s,\cdot)}_{L^\f(\R^d)}+\norm{\nabla_x\pi(s,\cdot)}_{L^\f(\R^d)})\,ds.
\end{equation}
This completes the proof of Proposition \ref{proposition:reg}.
	
\end{proof}
\section{Blow up of $C^1$ solutions}\label{sec:C1}

\subsection{Proof of Proposition \ref{theorem:blow-up-C1-1}}
In this subsection, we prove the blow up result of Proposition \ref{theorem:blow-up-C1-1}. Here, we do not derive the Riccati equation directly. Rather, we compare the velocity $\U(t,\cdot)$ with the solution of vectorial Burgers equation \eqref{eqn-Burgers-intro} with initial data $\U_0$.
\begin{proof}[Proof Proposition \ref{theorem:blow-up-C1-1}:]
	We argue by contradiction. Suppose that $T^*>\frac{1}{\la_{max}}$. Let us consider $\V$ to be the solution to the following Cauchy problem,
	\begin{align}
		\pa_t\V+\V\cdot\nabla_x\V&=0,\mbox{ for }x\in\R^d,t>0,\label{system-V}\\
		\V(0,x)&=\U_0(x)\mbox{ for }x\in\R^d.\label{data:V}
    \end{align}
  Set $\pi:=\sqrt{\frac{\gamma-1}{4\gamma}}\varrho^{\frac{\ga-1}{2}}$ and we consider the following symmetrization of the system \eqref{eqn-isen-1}--\eqref{eqn-isen-2},
	\begin{align}
		\pa_t\pi+\U\cdot\nabla_x\pi+C_1\pi\dv_x\U&=0,\label{system-pi}\\
		\pa_t\U+\U\cdot\nabla_x\U+C_1\pi\nabla_x\pi&=0,\label{system-U}
	\end{align}	
where $C_1=\frac{\gamma-1}{2}$. From \eqref{system-V} and \eqref{system-U} we have
\begin{equation*}
		\pa_t(\U-\V)+\U\cdot\nabla_x\U-\V\cdot\nabla_x\V+C_1\pi\nabla_x\pi=0.
\end{equation*}	
Let $\boldsymbol{\varphi}=\U-\V$. Then, we get
\begin{equation*}
	\pa_t\boldsymbol{\varphi}+\boldsymbol{\varphi}\cdot\nabla\U+\V\cdot\nabla_x\boldsymbol{\varphi}+C_1\pi\nabla_x\pi=0.
\end{equation*}
Multiply by $\boldsymbol{\varphi}$ to get
\begin{equation}
	\frac{1}{2}\pa_t\abs{\boldsymbol{\varphi}}^2+\nabla\U:\boldsymbol{\varphi}\otimes\boldsymbol{\varphi}+\frac{1}{2}\V\cdot\nabla_x\abs{\boldsymbol{\varphi}}^2+C_1\pi\nabla_x\pi\cdot\boldsymbol{\varphi}=0,
\end{equation}
or equivalently,
\begin{equation}\label{eq:ODE-varphi}
	(\pa_t+\V\cdot\nabla_x)\left(\frac{\abs{\boldsymbol{\varphi}}^2}{2}\right)=-\nabla\U:\boldsymbol{\varphi}\otimes\boldsymbol{\varphi}-C_1\pi\nabla_x\pi\cdot\boldsymbol{\varphi}.
\end{equation}
Now we consider $X(t,x_0)$ defined as 
\begin{equation*}
 \frac{d X(t,x_0)}{dt}=\V(t,X(t,x_0))\mbox{ and }X(0,x_0)=x_0.
\end{equation*}
We define $g(t)=\frac{1}{2}\abs{\boldsymbol{\varphi}(t,X(t,x_0))}^2$ to get
\begin{align*}
	\frac{d g(t)}{dt}&=-\nabla_x\U(t,X(t,x_0)):\boldsymbol{\varphi}(t,X(t,x_0))\otimes \boldsymbol{\varphi}(t,X(t,x_0))\\
	&-C_1\pi(t,X(t,x_0))\nabla_x\pi(t,X(t,x_0))\cdot \boldsymbol{\varphi}(t,X(t,x_0))\\
	&\leq 2\abs{\nabla_x\U(t,X(t,x_0))} a(t)+C_1\pi(t,X(t,x_0))\abs{\nabla_x\pi(t,X(t,x_0))}\sqrt{g(t)}.
\end{align*}
Since $T^*>1/\la_{max}$, we have $M\geq \norm{\nabla_x\U}_{L^\f([0,1/\la_{max}]\times\R^d)}$ and from \eqref{condition-eps-1} it follows that \\$\e_0\geq \norm{\pi\nabla_x\pi}_{L^\f([0,1/\la_{max}]\times\R^d)}$. Then, we can write
\begin{equation*}
	g^\p(t)\leq Mg(t)+\e_0 \sqrt{g(t)}.
\end{equation*}
Let $f(t)=\sqrt{g}(t)$. We have
\begin{equation*}
	f^\p(t)\leq M f(t)+\e_0\mbox{ or equivalently, }\left(f(t)e^{-Mt}\right)^\p\leq\e_0 e^{-Mt}.
\end{equation*}
Subsequently, we get
\begin{equation*}
	f(t)\leq e^{Mt}\left[f(0)+\frac{\e_0}{M}\left(1-e^{Mt}\right)\right]=\frac{\e_0}{M}\left(e^{Mt}-1\right).
\end{equation*}
From the assumption on $\U_0$ we have $\U_0(x_0+r\xi)-\U_0(x_0)=-\la_0r\xi $. Let $x_{\xi}=x_0+r\xi$, $X(t)=x_\xi+t\U_0(x_\xi)$ and $Y(t)=x_0+t\U_0(x_0)$. Then we have
\begin{align*}
	\abs{X(t)-Y(t)}^2&=\abs{x_\xi-x_0+t(\U_0(x_\xi)-\U_0(x_0))}^2\\
	&=\abs{x_\xi-x_0}^2+2t(\U_0(x_\xi)-\U_0(x_0))\cdot(x_\xi-x_0)+t^2\abs{\U_0(x_\xi)-\U_0(x_0)}^2\\
	&= \abs{x_\xi-x_0}^2-2\la_0t\abs{x_\xi-x_0}^2+\la_0^2t^2\abs{x_\xi-x_0}^2\\
	&=(1-t\la_0)^2\abs{x_\xi-x_0}^2.
\end{align*}
Since $\V=\U-\boldsymbol{\varphi}$, we note that
\begin{align*}
	&(x_\xi-x_0)\cdot(\U_0(x_\xi)-\U_0(x_0))\\
	&=(x_\xi-x_0)\cdot(\V(X(t))-\V(Y(t)))\\
	&=(x_\xi-x_0)\cdot(\U(X(t))-\U(Y(t)))-(x_\xi-x_0)\cdot(\boldsymbol{\varphi}(X(t))-\boldsymbol{\varphi}(Y(t))).
\end{align*}
As $\norm{\boldsymbol{\varphi}(t)}_{L^\f(\R^d)}\leq e^{Mt}\e_0$ for all $t\in[0,1/\la_{max}]$, we get
\begin{equation}
	\abs{(x_\xi-x_0)\cdot(\boldsymbol{\varphi}(X(t))-\boldsymbol{\varphi}(Y(t)))}\leq \frac{2e^{Mt}}{M}\abs{x_\xi-x_0}\e_0\leq\frac{2e^{\frac{M}{\la_{max}}}}{M}r\e_0.
\end{equation}
Similarly, we have
\begin{align*}
	&\abs{(x_\xi-x_0)\cdot(\U(X(t))-\U(Y(t)))}\\
	&=\abs{\int\limits_{0}^{1}(X(t)-Y(t))\cdot\nabla_x\U(t,\theta X(t)+(1-\theta)Y(t))\cdot(x_\xi-x_0)\,d\theta}\\
	&\leq \abs{X(t)-Y(t)}\abs{x_\xi-x_0}\norm{\nabla_x\U(t,\cdot)}_{L^\f(\R^d)}\\
	&=(1-\la_0 t)\abs{x_\xi-x_0}^2\norm{\nabla_x\U(t,\cdot)}_{L^\f(\R^d)}\\
	&=(1-\la_0t)r^2M.
\end{align*}
Subsequently, 
\begin{align*}
	\la_0r^2=\la_0\abs{x_\xi-x_0}^2&=\abs{(x_\xi-x_0)\cdot(\V_0(x_\xi)-\V_0(x_0))}\\
	&\leq (1-\la_0t)r^2M+\frac{2e^{\frac{M}{\la_{max}}}}{M}r\e_0.
\end{align*}
Therefore, for $t=\frac{1}{\la_{max}}$ we obtain
\begin{align*}
	\la_0r\leq \left(1-\frac{\la_0}{\la_{max}}\right)rM+\frac{2e^{\frac{M}{\la_{max}}}}{M}\e_0&= \frac{\la_{max}-\la_0}{\la_{max}}rM+\frac{2e^{\frac{M}{\la_{max}}}}{M}\e_0\\
	&\leq \left[\frac{rM}{\la_{max}}+\frac{2e^{\frac{M}{\la_{max}}}}{M}\right]\e_0.
\end{align*}
This contradicts with the assumption \eqref{condition-eps-1} on $\e_0$. Hence, we prove Proposition \ref{theorem:blow-up-C1-1}.
\end{proof}
 
\subsection{Proof of Theorem \ref{theorem:blow-up-C1-2}}\label{sec:thm-2}
Before we prove the blow up result of Theorem \ref{theorem:blow-up-C1-2} we construct an example of data $\U_0$ in dimension $2$ such that it satisfies \eqref{smallness-Hm}.
\begin{example}\label{example-1}
		We consider $\U_0:\R^2\rr\R^2$ defined as $\U_0(x_1,x_2)=(u_1,u_2)$
		\begin{align}
			u_1(x_1,x_2)&=-x_1\left(1+\frac{x_1^2}{R^2}\right)^{-n}\varphi\left(\left(1+\frac{x_1^2}{R^2}\right)\frac{x_2}{R}\right),\\ u_2(x_1,x_2)&=-x_2\left(1+\frac{x_2^2}{R^2}\right)^{-n}\varphi\left(\left(1+\frac{x_2^2}{R^2}\right)\frac{x_1}{R}\right),
		\end{align}
	 for some $\varphi\in C^\f_c(\R),R>0,n\in\mathbb{N}$ such that 
		\begin{equation}
			\left.\begin{array}{rl}
				\varphi(y)=1&\mbox{ if }\abs{y}\leq 1,\\
				0\leq \varphi(y)\leq 1&\mbox{ if }1\leq \abs{y}\leq 2,\\
					\varphi(y)=0 &\mbox{ if }\abs{y}\geq2.
			\end{array}\right\}
		\end{equation}
	We calculate for $j=k+l\geq1$,
	\begin{align}
		\pa_{x_1}^k\pa_{x_2}^lu_1=\frac{1}{R^{j-1}}\sum\limits_{i=0}^{j}\mathcal{P}_{i}^{k,l}\left(\frac{x_1}{R},\frac{x_2}{R}\right)\left(1+\frac{x_1^2}{R^2}\right)^{-n-i}\varphi^{(j-i)}\left(\left(1+\frac{x_1^2}{R^2}\right)\frac{x_2}{R}\right)
	\end{align}
where $\mathcal{P}^{k,l}_i$ is a polynomial of two variables. Subsequently, we obtain
\begin{equation*}
	\norm{\nabla_x^ju_1}_{L^2(\R^2)}=\mathcal{O}(1)R^{-j+\frac{3}{2}}\mbox{ for }1\leq j\leq 5.
\end{equation*}
Hence, we have $\norm{\nabla^2_xu_1}_{H^3(\R^2)}=\mathcal{O}(1)R^{-\frac{1}{2}}$. By a similar argument we can also have $\norm{\nabla^2_xu_2}_{H^3(\R^2)}=\mathcal{O}(1)R^{-\frac{1}{2}}$. We also note that
\begin{equation}
	\nabla_x\U(0,0)=\begin{pmatrix}
		\pa_{x_1}u_1(0,0)& 	\pa_{x_2}u_1(0,0)\\
			\pa_{x_1}u_2(0,0)& 	\pa_{x_2}u_2(0,0)
	\end{pmatrix}=\begin{pmatrix}
	-1&0\\
	0&-1
\end{pmatrix}.
\end{equation}
Choosing large enough $R>1$, we can see that $\U_0$ satisfies \eqref{smallness-Hm} and \eqref{condition-neg-1} with $\la_{max}=1$.
\end{example}
Next, we construct another example of data such that it satisfies \eqref{smallness-Hm} but it fails to satisfy the integral condition \eqref{condition-Sideris}. 
\begin{example}\label{example-2}
	Let us consider $\U_0$ defined as in Example \ref{example-1}. We now define $\tilde{\U}_0$ as $\psi_\la\U_0$ where $\psi\in C_c^\f$ having the property
	\begin{equation}
		\left.
		\begin{array}{ll}
			\psi_\la(x)=1&\mbox{ for }x\in B(0,2R),\\
			0\leq \psi_\la(x)\leq 1&\mbox{ for }2R\leq \abs{x}\leq \la,\\
			\psi_\la(x)=0&\mbox{ for }\abs{x}\geq \la,
		\end{array}\right\}
	\end{equation}
for some $\la>2R$ where $R$ is as Example \ref{example-1}. We set $\varrho_0(x)=\overline{\varrho}$ for all $x\in\R^2$ and some constant $\overline{\varrho}$. We choose $\overline{\varrho}$ large enough so that we have
\begin{equation}
	\frac{1}{\omega_2\la^{3}}\int\limits_{\R^2}\varrho_0(x)\U_0(x)\cdot x\,dx\leq \overline{\varrho}\frac{C_1R^4}{\omega_2\la^3}\mbox{ for some }C_1>0.
\end{equation}
Choosing $\la$ large enough and $\overline{\varrho}\geq1$, we can have $\overline{\varrho}\frac{C_1R^4}{\omega_2\la^3}\leq 3\sqrt{\gamma}(\overline{\varrho})^\frac{\gamma-1}{2}(\overline{\varrho})$. This shows that $(\varrho_0,\tilde{\U}_0)$ does not satisfy the integral condition \eqref{condition-Sideris}. 
\end{example}
In the following example, we consider a particular radially symmetric initial velocity $\U_0$ such that it satisfies smallness condition \eqref{smallness-Hm} and having a decreasing direction as in \eqref{condition-neg-1}.
\begin{example}[Radially symmetric data]
	We consider initial velocity $\U_0:\R^2\rr\R^2$ defined as follows
	\begin{equation*}
		\U_0(x)=h(r)\frac{x}{\abs{x}}\mbox{ where }r=\abs{x}.
	\end{equation*}
By an elementary calculation we get
\begin{equation*}
	\nabla_x\U_0(x)=\frac{h^\p(r)}{r^2}x\otimes x+\frac{h(r)}{r^3}\begin{pmatrix}
		x_2^2&x_1x_2\\
		x_1x_2&x_1^2
	\end{pmatrix}.
\end{equation*}
We write $a_{ij}=\pa_iu_j(x)=\frac{h^\p(r)}{r^2}\mathcal{P}^{1}_2(x_1,x_2)+\frac{h(r)}{r^3}\mathcal{P}^{2}_2(x_1,x_2)$ where $\mathcal{P}^{j}_2(x_1,x_2),j=1,2$ are polynomials of degree 2. Then we have
\begin{align*}
	\pa_ka_{ij}&=\frac{h^{(2)}(r)}{r^3}\mathcal{P}^{1}_3(x_1,x_2)+\frac{h^{(1)}(r)}{r^4}\mathcal{P}^{2}_3(x_1,x_2)+\frac{h(r)}{r^5}\mathcal{P}^{3}_3(x_1,x_2),\\
	\pa_l(\pa_ka_{ij})&=\frac{h^{(3)}(r)}{r^4}\mathcal{P}^{1}_4(x_1,x_2)+\frac{h^{(2)}(r)}{r^5}\mathcal{P}^{2}_4(x_1,x_2)+\frac{h^{(1)}(r)}{r^6}\mathcal{P}^{3}_4(x_1,x_2)+\frac{h(r)}{r^7}\mathcal{P}^{4}_4(x_1,x_2),\\
		\pa^2_{lm}(\pa_ka_{ij})&=\frac{h^{(4)}(r)}{r^5}\mathcal{P}^{1}_5(x_1,x_2)+\frac{h^{(3)}(r)}{r^6}\mathcal{P}^{2}_5(x_1,x_2)+\frac{h^{(2)}(r)}{r^7}\mathcal{P}^{3}_5(x_1,x_2)+\frac{h^{(1)}(r)}{r^8}\mathcal{P}^{4}_5(x_1,x_2)\\
		&+\frac{h(r)}{r^9}\mathcal{P}^{5}_5(x_1,x_2),\\
	\pa^3_{lmn}(\pa_ka_{ij})&=\frac{h^{(5)}(r)}{r^6}\mathcal{P}^{1}_6(x_1,x_2)+\frac{h^{(4)}(r)}{r^7}\mathcal{P}^{2}_6(x_1,x_2)+\frac{h^{(3)}(r)}{r^8}\mathcal{P}^{3}_6(x_1,x_2)+\frac{h^{(2)}(r)}{r^9}\mathcal{P}^{4}_6(x_1,x_2)\\
		&+\frac{h^{(1)}(r)}{r^{10}}\mathcal{P}^{5}_6(x_1,x_2)+\frac{h(r)}{r^{11}}\mathcal{P}^{6}_6(x_1,x_2).
\end{align*}
Now, we choose $h(z)=Re^{-\frac{z}{R}}\varphi(z-R)$ for $R>1$ and $\varphi$ is defined as
\begin{equation*}
		\left.
	\begin{array}{ll}
		\varphi(z)=1&\mbox{ for }\abs{z}\leq1,\\
		0\leq	\varphi(z)\leq 1&\mbox{ for }1\leq\abs{z}\leq 2,\\
			\varphi(z)=0&\mbox{ for } \abs{z}\geq 2.
	\end{array}\right\}
\end{equation*}
Then we have
\begin{equation*}
	\norm{	\nabla_x^l(\pa_ka_{ij})}_{L^2(\R^2)}=\mathcal{O}(1) R^{-1-l}\mbox{ for }l=0,1,2,3.
\end{equation*}
Further, we note that $	\nabla_x\U_0(R,0)=\begin{pmatrix} -1&0\\ 0&1\end{pmatrix}$. Hence, the initial velocity satisfies \eqref{condition-neg-1} with $\la_{max}=1$ and we can choose an appropriate $\varrho_0$ and $R>1$ such that \eqref{smallness-Hm} is satisfied.
\end{example}
See \cite{Li-Wang,MRRS-II} for studies on blow up of spherically symmetric solutions.

Now, we prove the blow up result for $C^1$ solution when initial data satisfies \eqref{condition-neg-1} and \eqref{smallness-Hm}. The proof is based on Kato type energy estimate derived as in section \ref{sec:energy-estimate} and deduction of a Riccati equation.
\begin{proof}[Proof of Theorem \ref{theorem:blow-up-C1-2}:]
	 We argue by contradiction. Suppose that $(\varrho,\U)\in C^1([0,T]\times\R^d;[0,\f)\times\R^d))$ for some $T>2/\la_{max}$. Similar to Proposition \ref{theorem:blow-up-C1-1}, we consider the following symmetrization of \eqref{eqn-isen-1}--\eqref{eqn-isen-2} in the spirit of \cite{KMU},
	 \begin{align}
	 	\pa_t\pi+\U\cdot\nabla_x\pi+C_1\pi\dv_x\U&=0,\label{blow-2-sym-1}\\
	 	\pa_t\U+\U\cdot\nabla_x\U+C_1\pi\nabla_x\pi&=0,\label{blow-2-sym-2}
	 \end{align}	
	 where $C_1=\frac{\gamma-1}{2}$ and $\pi=\sqrt{\frac{\gamma-1}{4\gamma}}\varrho^\frac{\gamma-1}{2}$.  Let us consider the flow $X(t,x_0)$ defined as follows,
	\begin{equation*}
		\frac{dX(t,x_0)}{dt}=\U(t,X(t,x_0))\mbox{ and }X(0,x_0)=x_0.
	\end{equation*}
	By using energy estimate \eqref{energy-est-1} we obtain
	\begin{equation}
	 \sum\limits_{k=0}^{m}\int\limits_{\R^d}\left[\abs{\nabla_x^{k+2}\pi(t,\cdot)}^2+\abs{\nabla_x^{k+2}\U(t,\cdot)}^2\right]\,dx\leq e^{C_2t}\sum\limits_{k=0}^{m}\int\limits_{\R^d}\left[\abs{\nabla_x^{k+2}\pi_0}^2+\abs{\nabla_x^{k+2}\U_0}^2\right]\,dx.
	\end{equation}
   Then from assumption \eqref{smallness-Hm} and Lemma \ref{lemma:Morrey} we have the smallness of $\norm{\nabla^2_x\pi(t,\cdot)}_{L^\f(\R^d)}$ as 
   \begin{equation}\label{estimate-pi-thm2}
   	\norm{\nabla^2_x\pi(t,\cdot)}_{L^\f(\R^d)}\leq e^{C_2t}\kappa\mbox{ where }\kappa=\sum\limits_{k=0}^{m}\int\limits_{\R^d}\left[\abs{\nabla_x^{k+2}\pi_0}^2+\abs{\nabla_x^{k+2}\U_0}^2\right]\,dx.
   \end{equation}
Subsequently, from \eqref{blow-2-sym-1}, we obtain
\begin{equation}
	0\leq \pi(t,X(t,x_0))\leq e^{C_2 t}\pi(x_0).
\end{equation}
     From \eqref{condition-neg-1}, there exists $x_0\in\R^d$ and $\xi_0\in\mathbb{S}^{d-1}$ such that
   \begin{equation*}
   	\sup\limits_{\xi\in\mathbb{S}^{d-1}}\left[-\nabla_x\U_0(x_0):\xi\otimes\xi\right]=-\nabla_x\U_0(x_0):\xi_0\otimes\xi_0= \la_{max}.
   \end{equation*}
  Applying $\nabla_x$ on both side of \eqref{blow-2-sym-2}, we have
   \begin{equation}
  	\pa_t\nabla\U+\U\cdot\nabla^2_x\U+\nabla_x\U\nabla_x\U+C_1\pi\nabla^2_x\pi+C_1\nabla_x\pi\otimes\nabla_x\pi=0.
  \end{equation}
	First we observe that
  \begin{align}
  	\pa_t[\nabla_x\U-\nabla_x^T\U]+\U\cdot\nabla_x[\nabla_x\U-\nabla_x^T\U]&=\nabla_x^T\U\nabla_x^T\U-\nabla_x\U\nabla_x\U\\
  	&=\nabla_x^T\U[\nabla_x^T\U-\nabla_x\U]+[\nabla^T_x\U-\nabla_x\U]\nabla_x\U.
  \end{align}
Along the curve $X(t,x_0)$ we have $\frac{d}{dt}\abs{[\nabla_x\U-\nabla_x^T\U](t,X(t))}\leq C_1\abs{[\nabla_x\U-\nabla_x^T\U](t,X(t))}$. By Gr\"onwall's lemma we conclude that $\nabla_x\U(t,X(t))=-\nabla_x^T\U(t,X(t))$ since $\nabla_x\U_0(x_0)=-\nabla_x^T\U_0(x_0)$. Now, for any $\xi\in\mathbb{S}^{d-1}$ we have,
  \begin{align}
  		\pa_t[-\nabla\U:\xi\otimes\xi]+\U\cdot\nabla_x[-\nabla\U:\xi\otimes\xi]
  		&=[\nabla_x\U\nabla_x\U]:\xi\otimes\xi+C_1\pi\nabla^2_x\pi:\xi\otimes\xi+C_1\abs{\xi\cdot\nabla_x\pi}^2\nonumber\\
  		&\geq [\nabla_x\U\nabla_x\U]:\xi\otimes\xi+C_1\pi\nabla^2_x\pi:\xi\otimes\xi.\label{eqn-derivative-u}
  \end{align}
We note that $[\nabla_x\U_0(x_0)\nabla_x\U_0(x_0)]:\xi_0\otimes\xi_0=\la_{max}^2$ since $-\nabla\U_0(x_0)\xi_0=\la_{max}\xi_0$. Let $\de>0$. Since $\U\in C^1([0,T)\times \R^d)$ there exists $t_1>0$ such that the following holds
\begin{equation}\label{estimate-derivative-U}
	[\nabla_x\U(t,X(t,x_0))\nabla_x\U(t,X(t,x_0))]:\xi_0\otimes\xi_0\geq \abs{\nabla_x\U(t,X(t,x_0)):\xi_0\otimes\xi_0}^2-\de^2.
\end{equation}
Using \eqref{estimate-pi-thm2}, \eqref{eqn-derivative-u} and \eqref{estimate-derivative-U} we get
   \begin{equation}
	\frac{d}{dt}[-\nabla\U(t,X(t,x_0)):\xi_0\otimes\xi_0] \geq \abs{\nabla_x\U(t,X(t,x_0)):\xi_0\otimes\xi_0}^2-\de^2-C_1\kappa\pi(x_0)e^{2C_2t}.
   \end{equation}
Define $a(t):=[-\nabla\U(t,X(t,x_0)):\xi_0\otimes\xi_0] $. For sufficiently small $\de>0$ we have
\begin{equation}
	a^\p(t)\geq a^2(t)-2C_1\kappa\pi(x_0)e^{2C_2t}.
\end{equation}
From the assumption \eqref{smallness-Hm}, we have $\kappa \pi_0(x_0)\leq \e_0$. Hence,
\begin{equation}
	a^\p(t)\geq a^2(t)-2C_1\e_0e^{2C_2t}.
\end{equation}
Next, we consider $b(t)=a^m(t)$. Then we have
\begin{align*}
	b^\p(t)=ma^{m-1}(t)a^\p(t)&\geq ma^{m-1}(t)[a^2(t)-2C_1\e_0e^{2C_2t}]\\
	&=ma^{m+1}(t)-2C_1m\e_0e^{2C_2t} a^{m-1}\\
	&=mb^{1+\frac{1}{m}}(t)-2C_1m\e_0e^{2C_2t}b^{1-\frac{1}{m}}(t).
\end{align*}
We define $\La(t)=b(t)e^{-At}$. Then we have
\begin{align*}
	\La^\p(t)&=b^\p(t)e^{-At}-Ab(t)e^{-At}\\
	&\geq mb^{1+\frac{1}{m}}(t)e^{-At}-Ab(t)e^{-At}-2C_1m\e_0e^{2C_2t}b^{1-\frac{1}{m}}(t)e^{-At}\\
	&=mb^{1+\frac{1}{m}}(t)e^{-At-\frac{At}{m}}e^{\frac{At}{m}}-A\La(t)-2C_1m\e_0e^{2C_2t}b^{1-\frac{1}{m}}(t)e^{-At}\\ 
	&=m\La^{1+\frac{1}{m}}(t)e^{\frac{At}{m}}-A\La(t)-2C_1m\e_0e^{2C_2t}b^{1-\frac{1}{m}}(t)e^{-At}.
\end{align*}
We choose large $A$ such that $A\geq 2C_2$. Then we have
\begin{equation*}
	\La^\p(t)\geq m\La^{1+\frac{1}{m}}(t)e^{\frac{At}{m}}-A\La(t)-2C_1m\e_0b^{1-\frac{1}{m}}(t).
\end{equation*}
Using $b(t)=\La(t) e^{At}$, we get
\begin{align*}
	\La^\p(t)&\geq m\La^{1+\frac{1}{m}}(t)e^{\frac{At}{m}}-A\La(t)-2C_1m\e_0\La^{1-\frac{1}{m}}(t)e^{At-\frac{At}{m}}\\
	&=m\La^{1+\frac{1}{m}}(t)e^{\frac{At}{m}}\left[1-2C_1\e_0\La^{-\frac{2}{m}}(t)e^{-\frac{At}{m}}\right]-A\La(t).
\end{align*}
We note that $\La^\frac{2}{m}(t)=b^\frac{2}{m}(t)e^{-\frac{2At}{m}}=a^2(t)e^{-\frac{2At}{m}}$. We choose large enough $m>1$ such that $e^{\frac{At}{m}}\leq \frac{5}{4}$ for all $t\in[0,T]$. From the assumption on initial data, we have
\begin{equation*}
	2C_1\e_0\La^{-\frac{2}{m}}(t)e^{-\frac{At}{m}}=2C_1\e_0a^{-2}(t)e^{\frac{At}{m}}\leq \frac{5C_1}{2}\e_0a^{-2}(t).
\end{equation*}
Since $a^\p(t)\geq0$, we have $a(t)\geq a(0)=\la_{max}-\de$, from the choice of $\e_0$ (as in Theorem \ref{theorem:blow-up-C1-2}) we get
\begin{equation*}
		2C_1\e_0 \La^{-\frac{2}{m}}(t)e^{-\frac{At}{m}}\leq \frac{5C_1}{2}\e_0 (\la_{max}-\de)^{-2}\leq \frac{1}{4}\mbox{ for small enough }\de>0.
\end{equation*}
Therefore, we have
\begin{align*}
	\La^\p(t)&\geq \frac{3m}{4}\La^{1+\frac{1}{m}}(t)-A\La(t)\\
	             &\geq \frac{3m}{4}\La^{1+\frac{1}{m}}(t)\left[1-\frac{4A}{3m}\La^{-\frac{1}{m}}(t)\right].
\end{align*}
For large enough $m>1$, we have $\frac{A}{m}\La^{-\frac{1}{m}}(t)\leq \frac{1}{4}$. Hence,
\begin{equation}
	\La^\p(t)\geq\frac{m}{2}\La^{1+\frac{1}{m}}(t).  
\end{equation}
Therefore we have
\begin{equation}
	m\left[\frac{1}{\La^\frac{1}{m}(0)}-\frac{1}{\La^\frac{1}{m}(t)}\right]\geq \frac{mt}{2},
\end{equation}
or equivalently, 
\begin{equation}
	\frac{1}{\La^\frac{1}{m}(t)}\leq \frac{1}{\La^\frac{1}{m}(0)}-\frac{t}{2}.  
\end{equation}
Recall that $\La^\frac{1}{m}(t)=(b(t)e^{-At})^\frac{1}{m}=b^\frac{1}{m}(t)e^{-\frac{At}{m}}=a(t)e^{-\frac{At}{m}}$ and $\La^\frac{1}{m}(0)=b^\frac{1}{m}(0)=a(0)$. Therefore, 
\begin{equation}\label{eq-m-a(t)}
	\frac{1}{a(t)e^{-\frac{At}{m}}}\leq \frac{1}{a(0)}-\frac{t}{2}\mbox{ for }t\in[0,t_1].
\end{equation}
Since \eqref{eq-m-a(t)} remains true for all $m$ sufficiently large, we obtain
\begin{equation}
		\frac{1}{a(t)}\leq \frac{1}{a(0)}-\frac{t}{2}\mbox{ for all }t\in[0,t_1].
\end{equation}
Therefore,
\begin{equation}
	\frac{1}{a(t_1)}\leq \frac{1}{a(0)}-\frac{t_1}{2}.
\end{equation}
Let $\nu(t)$ be defined as $\nu(t):=\sup\limits_{\xi\in\mathbb{S}^{d-1}}[-\nabla\U(t,X(t,x_0)):\xi\otimes\xi]$. We observe that $\nu(0)=a(0)$ and $\nu(t_1)\geq a(t_1)$. Therefore, 
\begin{equation}
	\frac{1}{\nu(t_1)}\leq \frac{1}{a(t_1)}\leq \frac{1}{a(0)}-\frac{t_1}{2}=\frac{1}{\nu(0)}-\frac{t_1}{2}.
\end{equation} Now, we apply the same argument at time $t_1$ and $x_1=X(t_1,x_0)$. In this case, we reuse the previous method for direction $\xi_1\in\mathbb{S}^{d-1}$ satisfying $\nu(t_1)=[-\nabla\U(t,X(t,x_0)):\xi_1\otimes\xi_1]$. We note that $\nu(t_1)\geq \nu(0)$. Hence, there exists $t_2\geq 2t_1$ such that \eqref{estimate-derivative-U} holds. We repeat the same argument for time $t_1,t_2$ we can obtain
\begin{equation}
		\frac{1}{\nu(t_2)}\leq \frac{1}{\nu(t_1)}-\frac{t_2-t_1}{2}.
\end{equation}
Since $\U,\pi$ belong to $C^1([0,T]\times\R^d)$, for any $t^*\in(0,T)$, we have finitely many points $0=t_0<t_1<t_2<\cdots<t_k=t^*$ such that
\begin{equation}
	\frac{1}{\nu(t_{j})}\leq \frac{1}{\nu(t_{j-1})}-\frac{t_{j+1}-t_j}{2}\mbox{ for all }1\leq j\leq k.
\end{equation}
Hence, we get
\begin{equation}
	\frac{1}{\nu(t_*)}\leq \frac{1}{\nu(0)}-\sum\limits_{j=1}^{k}\frac{t_{j}-t_{j-1}}{2}=\frac{1}{\nu(0)}-\frac{t^*}{2},
\end{equation}
or equivalently,
\begin{equation}
	\nu(t^*)\geq \frac{2\nu(0)}{2-\nu(0)t^*}.
\end{equation}
Hence, $\nu(t)\rr\f$ as $t\rr 2/\nu(0)$.  This completes the proof of Theorem \ref{theorem:blow-up-C1-2}. 

\end{proof}
\section{Break down of continuity of a solution}\label{sec:break-down-cont}

Adapting \cite[Theorem 4.1.1]{Dafermos}, we have the following result.
\begin{lemma}\label{lemma:fsp}
	Let $(\varrho,\U)\in C([0,T^*)\times\R^d)$ be an admissible solution to \eqref{eqn-isen-1}--\eqref{eqn-isen-2} (as in Definition \ref{defn:admissible}) corresponding to initial data $(\varrho_0,\U_0)$. Suppose $(\varrho_0,\U_0)$ satisfies
	\begin{equation}
		(\varrho_0(x),\U_0(x))=(\overline{\varrho},\textbf{0})\mbox{ for }\abs{x}\geq R\mbox{ for some }R>0.
	\end{equation}
Then we have
\begin{equation}
	(\varrho(t,\cdot),\U(t,\cdot))=(\overline{\varrho},\textbf{0})\mbox{ for }\abs{x}\geq R+\si t\mbox{ for }t\in[0,T^*)\mbox{ and }\si=\sqrt{\gamma}(\overline{\varrho})^{\frac{\gamma-1}{2}}.
\end{equation}
\end{lemma}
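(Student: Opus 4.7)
The plan is to adapt Dafermos's relative-entropy method, together with a passage to the limit $\sigma^\p \searrow \si$ needed because $\si$ coincides exactly with the linearized sound speed at the background state $(\overline{\varrho},\textbf{0})$. First, set up the relative energy
\begin{equation*}
E(\varrho,\U) := \tfrac{1}{2}\varrho\abs{\U}^2 + P^*(\varrho),\qquad P^*(\varrho) := P(\varrho) - P(\overline{\varrho}) - P^\p(\overline{\varrho})(\varrho - \overline{\varrho}),
\end{equation*}
which is non-negative by convexity of $P$ and vanishes precisely at $(\overline{\varrho},\textbf{0})$, and the associated flux $Q := (E + p(\varrho) - p(\overline{\varrho}))\U$. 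Combining the admissibility inequality~\eqref{ineq:entropy} with $P^\p(\overline{\varrho})$ times the weak continuity equation yields the distributional inequality $\pa_t E + \dv_x Q \leq 0$.

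Fix $\sigma^\p > \si$ and $\tau \in (0,T^*)$ and test against $\varphi_\e(t,x) = \theta_\tau(t)\chi_\e(\abs{x} - R - \sigma^\p t)$, with $\theta_\tau$ approximating the indicator of $[0,\tau]$ and $\chi_\e$ a smooth approximation of the Heaviside on $[0,\infty)$. The initial contribution vanishes by~\eqref{assumption-support}, and passing to the limit gives
\begin{equation*}
\int_{\abs{x} \geq R + \sigma^\p \tau} E(\tau,x)\,dx + \int_0^\tau \int_{\abs{x} = R + \sigma^\p t}\bgh{\sigma^\p E - Q \cdot \widehat{x}}\,dS\,dt \leq 0.
\end{equation*}
A Taylor expansion of $\sigma^\p E - Q \cdot \widehat{x}$ around $(\overline{\varrho},\textbf{0})$, using $\si^2 = p^\p(\overline{\varrho})$, shows that the integrand is a positive-definite quadratic form in $(\varrho - \overline{\varrho}, \U)$ at leading order, with margin of order $\sigma^{\p 2} - \si^2$. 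Hence there is a neighborhood $\mathcal{N}_{\sigma^\p}$ of $(\overline{\varrho},\textbf{0})$ on which $\sigma^\p E - Q \cdot \widehat{x} \geq 0$ pointwise.

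This sets up a bootstrap. Let $T^{**}_{\sigma^\p}$ be the supremum of $t$ for which $(\varrho,\U) = (\overline{\varrho},\textbf{0})$ on $\{\abs{x} \geq R + \sigma^\p s\}$ for every $s \leq t$; continuity of $(\varrho,\U)$ makes this supremum attained, and the initial condition gives $T^{**}_{\sigma^\p} \geq 0$. If $T^{**}_{\sigma^\p} < T^*$, continuity on the compact sphere $\{\abs{x} = R + \sigma^\p T^{**}_{\sigma^\p}\}$ produces $\delta > 0$ such that $(\varrho,\U) \in \mathcal{N}_{\sigma^\p}$ on $\{\abs{x} = R + \sigma^\p t\}$ for $t \in [T^{**}_{\sigma^\p},T^{**}_{\sigma^\p}+\delta]$. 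Re-running the integrated inequality with initial time $T^{**}_{\sigma^\p}$ then forces $E \equiv 0$ outside the $\sigma^\p$-cone on this slab, contradicting the definition of $T^{**}_{\sigma^\p}$. Hence $T^{**}_{\sigma^\p} = T^*$. Finally, any $(t,x)$ with $\abs{x} > R + \si t$ satisfies $\abs{x} > R + \sigma^\p t$ for some $\sigma^\p \in (\si,(\abs{x}-R)/t)$, so $(\varrho,\U)(t,x) = (\overline{\varrho},\textbf{0})$; continuity extends this to $\{\abs{x} = R + \si t\}$, giving the claim on the closed set $\{\abs{x} \geq R + \si t\}$.

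The main difficulty is precisely the sub-characteristic estimate: because $\si$ equals the linearized sound speed, $\si E - Q \cdot \widehat{x}$ is saturated at the quadratic level and can fail to be non-negative at cubic order, for instance in directions corresponding to a compressive outflow. This is what forces the two-step route of first proving the claim at the strictly larger speed $\sigma^\p > \si$ and then letting $\sigma^\p \to \si^+$, and this is also where the assumed continuity of $(\varrho,\U)$ is crucial, via its exploitation on compact transverse cross-sections of the lateral cone surface.
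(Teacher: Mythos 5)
Your proposal follows the same essential route as the paper: both use the relative-entropy pair (your $E$ and $Q$, the paper's $\eta$ and $Q$ in the $(\varrho,\m)$ variables), both exploit the fact that the lateral-boundary quantity $s\eta-\xi\cdot Q$ is a positive-definite quadratic form near $(\overline{\varrho},\textbf{0})$ exactly when $s$ exceeds the linearized sound speed $\sqrt{p^\p(\overline{\varrho})}=\si$ (the paper computes this via the Hessian identity $\nabla^2_{\varrho,\m}\Phi=\nabla^2\eta[s\,\mathbb{I}_{d+1}-\nabla_{\varrho,\m}F_\xi]$ evaluated at the background state), both use continuity of $(\varrho,\U)$ through a continuation step to keep the solution inside the neighborhood where the quadratic form dominates, and both conclude by letting the inflated speed decrease to $\si$. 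The genuine differences are cosmetic: the paper first proves an auxiliary claim on half-space domains $\{x\cdot\xi\geq R+\si t\}$ and then takes the union over $\xi\in\mathbb{S}^{d-1}$, whereas you work directly with spherical cross-sections; and your bootstrap (the supremal time $T^{**}_{\sigma^\p}$) is stated more transparently than the paper's compressed ``without loss of generality $t_0<\de$'' shift.

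One technical step needs repair. Your test function $\varphi_\e(t,x)=\theta_\tau(t)\chi_\e(\abs{x}-R-\sigma^\p t)$ is not compactly supported in $x$ (it equals $\theta_\tau(t)$ once $\abs{x}$ is large), while the admissibility inequality in Definition \ref{defn:admissible} is only postulated for $\varphi\in C^1_c$. Since the lemma being proved is precisely a finite-speed-of-propagation statement, you have no a priori integrability or decay of $E(\varrho(t,\cdot),\U(t,\cdot))$ at spatial infinity for $t>0$, so the inequality you pass to in the limit is not yet licensed. The paper sidesteps this by testing only against cutoffs of bounded, truncated cones $\mathcal{J}_{t_0,x_0}$, whose boundary decomposes into a bottom face on $\{t=0\}$ (where the data is known to be $(\overline{\varrho},\textbf{0})$) and compact lateral faces. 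Your argument goes through verbatim once you replace the unbounded region $\{\abs{x}\geq R+\sigma^\p t\}$ by a bounded annular region, e.g. $R+\sigma^\p t\leq\abs{x}\leq\rho-\sigma^\p t$ for fixed large $\rho$, or equivalently by running the same relative-entropy estimate on inward domain-of-influence cones $\{\abs{x-y}\leq\rho-\sigma^\p t\}$ with $\abs{y}$ large and then covering $\{\abs{x}\geq R+\sigma^\p t\}$ by such balls.
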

Though in \cite[Theorem 4.1.1]{Dafermos}, the result has been proved for classical $C^1$ solution but we can repeat the proof with the assumption of entropy inequality \eqref{ineq:entropy}. 
\begin{remark}\label{remark-blow-up-cont}
	\begin{enumerate}
		\item We remark that if a weak solutions having H\"older regularity $C^{\frac{1}{3}}$ or if it is continuous and BV the equality holds in \eqref{ineq:entropy}. We refer to \cite{FGGW} for details in this regards.
		\item We also remark that the result of finite speed of propagation remains valid even for $L^\f$ solutions but in that case we must take $\si$ as follows
		\begin{equation}\label{speed:L-infty}
			\si=\sup\limits_{\xi\in\mathbb{S}^{d-1},(t,x)\in[0,T^*)\times\R^d}\abs{\U(t,x)\cdot\xi\pm\sqrt{\ga}( \varrho(t,x))^{\frac{\ga-1}{2}}}.
		\end{equation}
	   \item We also note that the proof of Theorem \ref{theorem-cont} can also be replicated for $L^\f$ solutions but $L^\f$--data $\varrho_0,\U_0$ fails to satisfy the condition \eqref{condition-Sideris}. Indeed, we have $\abs{\U_0}\leq \si$ where $\si$ is defined as in \eqref{speed:L-infty}. Subsequently,
	   \begin{equation*}
	   	\frac{1}{\omega_dR^{d+1}}\int\limits_{B(\textbf{0},R)}\varrho_0\U_0\cdot x\,dx\leq \frac{\norm{\varrho_0}_{L^\f(\R^d)}\si}{\omega_dR^{d+1}}\int\limits_{B(\textbf{0},R)}\abs{x}\,dx= \frac{\norm{\varrho_0}_{L^\f(\R^d)}\si}{d+1}.
	   \end{equation*}
	\end{enumerate}
\end{remark}
\begin{proof}[Proof of Lemma \ref{lemma:fsp}:]
	 Consider $\eta(\varrho,\textbf{m})$ and $Q(\varrho,\textbf{m})$ as follows
	\begin{align}
		\eta(\varrho,\textbf{m})&=\frac{\abs{\textbf{m}}^2}{2\varrho}+P(\varrho)-P^\p(\overline{\varrho})(\varrho-\overline{\varrho})-P(\overline{\varrho}),\\
		Q(\varrho,\textbf{m})&=\frac{\textbf{m}}{\varrho}\left[\frac{\abs{\textbf{m}}^2}{2\varrho}+P(\varrho)-P^\p(\overline{\varrho})(\varrho-\overline{\varrho})-P(\overline{\varrho})\right].
	\end{align}
For $s\in \R$ and $\xi\in \mathbb{S}^{d-1}$ we define $\Phi(s,\xi;\varrho,\textbf{m})=s\eta(\varrho,\textbf{m})-\xi\cdot Q(\varrho,\textbf{m})$. We also set 
\begin{equation*}
	F_\xi(\varrho,\textbf{m})=\left(\textbf{m}\cdot\xi,\frac{\textbf{m}\cdot\xi}{\varrho^2}\textbf{m}+p(\varrho)\xi\right).
\end{equation*} 
We note that
\begin{align}
	&\Phi(s,\xi;\overline{\varrho},\textbf{0})=0,\quad \nabla_{\varrho,\textbf{m}}\Phi(s,\xi;\overline{\varrho},\textbf{0})=\textbf{0},\\
	&\nabla_{\varrho,\textbf{m}}^2\Phi(s,\xi;\overline{\varrho},\textbf{0})=\nabla^2\eta(\overline{\varrho},\textbf{0})\left[s\mathbb{I}_{d+1}-\nabla_{\varrho,\textbf{m}}F_\xi(\overline{\varrho},\textbf{0})\right].
\end{align}
Let eigenvalues of $F_\xi(\overline{\varrho},\textbf{0})$ be $\la_1(\overline{\varrho},\textbf{0})=-\sqrt{p^\p(\overline{\varrho})},\la_{d+1}(\overline{\varrho},\textbf{0})=\sqrt{p^\p(\overline{\varrho})}$ and $\la_j(\overline{\varrho},\textbf{0})=0,\,2\leq j\leq d$. Let $\zeta_j,1\leq j\leq d+1$ be eigenvectors corresponding to $\la_j,1\leq j\leq d+1$ respectively. Subsequently, we have
\begin{equation}
	\nabla_{\varrho,\textbf{m}}^2\Phi(s,\xi;\overline{\varrho},\textbf{0}):\zeta_j\otimes \zeta_j=(s-\la_j(\overline{\varrho},\textbf{0}))\nabla^2\eta(\overline{\varrho},\textbf{0}):\zeta_j\otimes \zeta_j
\end{equation}
Hence, for $s=\sqrt{p^\p(\overline{\varrho})}+\e$ with $\e>0$ we get
\begin{equation*}
		\nabla_{\varrho,\textbf{m}}^2\Phi(\sqrt{p^\p(\overline{\varrho})}+\e,\xi;\overline{\varrho},\textbf{0}):\zeta_j\otimes \zeta_j\geq C_0\e\mbox{ for some }C_0>0.
\end{equation*}
From the smoothness of $(\varrho,\textbf{m})\mapsto \Phi(s,\xi;\varrho,\textbf{m})$ there exists $\de=\de(\e)>0$ such that
\begin{equation}\label{estimate-lower-Phi}
	\Phi(\sqrt{p^\p(\overline{\varrho})}+\e,\xi;\varrho,\textbf{m})\geq \frac{\e}{4}\left[\abs{\textbf{m}}^2+\abs{\varrho-\overline{\varrho}}^2\right]\mbox{ for }\abs{\textbf{m}}+\abs{\varrho-\overline{\varrho}}\leq \de\mbox{ for all }\xi\in\mathbb{S}^{d-1}.
\end{equation}
	\begin{claim}\label{claim-1}
			Let $(\varrho,\U)\in C([0,T^*)\times\R^d)$ be an admissible solution to \eqref{eqn-isen-1}--\eqref{eqn-isen-2} (as in Definition \ref{defn:admissible}) corresponding to initial data $(\varrho_0,\U_0)$. Suppose $(\varrho_0,\U_0)$ satisfies
		\begin{equation}
			(\varrho_0(x),\U_0(x))=(\overline{\varrho},\textbf{0})\mbox{ if }x\cdot\xi\geq R\mbox{ for some }\xi\in\mathbb{S}^{d-1}\mbox{ and }R>0.
		\end{equation}
		Then we have
		\begin{equation}
			(\varrho(t,\cdot),\U(t,\cdot))=(\overline{\varrho},\textbf{0})\mbox{ for }x\cdot\xi\geq R+\si t\mbox{ for }t\in[0,T^*)\mbox{ and }\si=\sqrt{\gamma}(\overline{\varrho})^{\frac{\gamma-1}{2}}.
		\end{equation}
	\end{claim}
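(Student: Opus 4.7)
My plan is to use the standard relative entropy / moving half-space argument of Dafermos, modified to exploit only continuity of the solution (rather than $C^1$ regularity) via the entropy inequality from Definition \ref{defn:admissible}. Fix $\xi \in \mathbb{S}^{d-1}$ and set $s = \sqrt{p'(\overline{\varrho})} + \varepsilon = \sigma + \varepsilon$ for a small $\varepsilon>0$. For $0 \leq \tau < T^*$, define
\[
	I(\tau) := \int\limits_{\{x\cdot\xi \geq R + s\tau\}} \eta(\varrho(\tau,x),\textbf{m}(\tau,x))\,dx,
\]
where $\textbf{m} = \varrho \U$. By the hypothesis on the initial data, $I(0) = 0$.

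The key step is to test the admissibility inequality with a smooth nonnegative approximation $\varphi_n(t,x)$ of the indicator $\mathbf{1}_{\{t \leq \tau\}} \mathbf{1}_{\{x\cdot\xi \geq R + st\}}$ (truncated in the tangential directions to be compactly supported), together with the relative version obtained by subtracting off the constant state $(\overline{\varrho},\textbf{0})$ using the continuity and momentum equations tested against the constants $-P'(\overline{\varrho})$ and $\mathbf{0}$. Passing to the limit $n \to \infty$ and using the dominated convergence (justified by the a priori bound on $\eta$ coming from finite energy) yields, for all $0<\tau<T^*$,
\begin{equation}\label{eq:plan-1}
	I(\tau) + \int\limits_{0}^{\tau}\int\limits_{\{x\cdot\xi = R+st\}} \Phi(s,\xi;\varrho,\textbf{m})\,d\mathcal{H}^{d-1}\,dt \;\leq\; I(0) = 0.
\end{equation}
Both terms on the left should be nonnegative: $I(\tau) \geq 0$ is immediate from convexity of $\eta$ around $(\overline{\varrho},\textbf{0})$, while nonnegativity of the surface integral follows from \eqref{estimate-lower-Phi} \emph{provided} the solution stays within the smallness threshold $\abs{\textbf{m}} + \abs{\varrho - \overline{\varrho}} \leq \delta$ on the lateral boundary $\{x\cdot\xi = R + st\}$.

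To enforce this smallness condition, I will argue by continuity and a connectedness (bootstrap) argument. Define
\[
	\tau_* := \sup\Big\{\tau \in [0,T^*) : \abs{\textbf{m}(t,x)} + \abs{\varrho(t,x) - \overline{\varrho}} \leq \delta \text{ for all } t \in [0,\tau],\ x\cdot\xi \geq R + st\Big\}.
\]
By continuity of $(\varrho,\U)$ and the initial hypothesis, $\tau_* > 0$. For $\tau < \tau_*$ the inequality \eqref{eq:plan-1} together with \eqref{estimate-lower-Phi} forces $I(\tau) = 0$, hence $(\varrho,\textbf{m})(\tau,x) = (\overline{\varrho},\textbf{0})$ a.e.\ on $\{x\cdot\xi \geq R + s\tau\}$; by continuity this equality is pointwise. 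In particular the defining inequality for $\tau_*$ is satisfied trivially at $\tau = \tau_*$, and then continuity forces $\tau_* = T^*$. Finally, let $\varepsilon \downarrow 0$ to replace $s = \sigma + \varepsilon$ by $\sigma$, which yields the claim.

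The main obstacle, and the step I would spend care on, is the justification of \eqref{eq:plan-1} from the weak form of the entropy inequality in Definition \ref{defn:admissible} when only continuity (not $C^1$) is available; the natural route is to build $\varphi_n$ as a product of a smooth cutoff in $t$ approximating $\mathbf{1}_{[0,\tau]}$ and a smooth cutoff in $x$ approximating $\mathbf{1}_{\{x\cdot\xi \geq R+st\}}\chi_n(x)$ (where $\chi_n$ is a compactly supported cutoff growing to $1$), then take the product limit, using the fact that $\eta$ and $Q$ are locally integrable and that $(\varrho,\U) \equiv (\overline\varrho,\mathbf 0)$ outside a large ball (which itself is available from the $t=0$ data combined with a standard localization). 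The remaining work consists of verifying the identities $\Phi(s,\xi;\overline\varrho,\mathbf 0) = 0$, $\nabla_{\varrho,\textbf{m}} \Phi(s,\xi;\overline\varrho,\mathbf 0) = 0$ already noted in the excerpt, which ensure that the constant-state contribution in the relative formulation vanishes identically.
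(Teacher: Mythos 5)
Your proof rests on the same relative-entropy apparatus as the paper — the quantities $\eta$, $Q$, the scalar $\Phi(s,\xi;\cdot)=s\eta-\xi\cdot Q$, and the local lower bound \eqref{estimate-lower-Phi} — but you integrate it over a moving \emph{half-space}, whereas the paper integrates over a compact backward cone, and that difference is where your argument breaks down.

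The gap is the localization in the tangential directions. The region $\{x\cdot\xi\geq R+st\}$ is unbounded, so to obtain \eqref{eq:plan-1} from the weak entropy inequality you must truncate by a compactly supported $\chi_n$, and the resulting extra term $\int_0^\tau\int Q\cdot\nabla_x\chi_n\,dx\,dt$ does not vanish as $n\to\infty$ unless you already know that $Q$ (equivalently $\mathbf m=\varrho\U$) vanishes far away in the tangential directions. Your proposed fix — that ``$(\varrho,\U)\equiv(\overline\varrho,\mathbf 0)$ outside a large ball \dots is available from the $t=0$ data combined with a standard localization'' — is circular: the hypothesis of the Claim only prescribes $(\varrho_0,\U_0)$ on the single half-space $\{x\cdot\xi\geq R\}$, and even in the intended application (Lemma~\ref{lemma:fsp}, where the data \emph{is} constant outside $B_R$) the propagation of that property to $t>0$ is precisely what the Claim is proving. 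The same unboundedness also makes $I(\tau)$ ill-defined a priori, and undermines the assertion ``$\tau_*>0$'': uniform smallness of $\abs{\mathbf m}+\abs{\varrho-\overline\varrho}$ over the noncompact moving boundary $\{x\cdot\xi=R+st\}$ is not a consequence of continuity at $t=0$ alone, since continuity only yields a time $t_K>0$ for each compact $K$ and $t_K$ may shrink to $0$ as $K$ exhausts the half-space.

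The paper circumvents all of this by compactness. It supposes a first bad point $(t_0,x_0)$ with $x_0\cdot\xi\geq R+\sigma t_0$, shifts $R$ so that $t_0<\delta(\e)$, and tests against cutoffs $\psi_n$ supported in the \emph{compact} truncated backward cone $\mathcal{J}_{t_0,x_0}$ of slope $\sigma_\e=\sigma+\e$. No tangential truncation is ever needed. The boundary terms are then the base $\mathcal{B}_1$ at $t=0$, where $\eta=0$ by the initial hypothesis, and the slanted lateral pieces $\mathcal{B}_2\cup\mathcal{B}_3$, on which $\abs{\varrho-\overline\varrho}+\abs{\mathbf m}\le\delta$ holds automatically because the whole cone sits in a small space-time neighbourhood of $(0,x_0)$; \eqref{estimate-lower-Phi} then makes the lateral contribution strictly positive near the apex (where the solution was assumed to deviate), contradicting $\int_{\partial\mathcal J}\Phi\,d\mathcal S\le 0$. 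Your first-bad-time/bootstrap logic is sound in spirit and mirrors the paper's, but to close the argument you must run it on compact cones rather than half-spaces.
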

\begin{proof}[Proof of Claim \ref{claim-1}:] We are going to prove this by contradiction. Fix $\e>0$. Suppose there exists a point $(t_0,x_0)\in(0,T^*)\times\R^d$ such that (i) $x_0\cdot\xi\geq R+\si t_0$ and (ii) $(\varrho(t_0,x_0),\U(t_0,x_0))\neq (\overline{\varrho},\textbf{0})$. Without loss of generality we can assume that $t_0<\de=\de(\e)$ if required we choose the minimal time $t^*$ as $t_0$ when (i) and (ii) holds and we replace $R$ by $R+\si \left(t^*-\frac{\de}{2}\right)$. Now consider the following set
	\begin{equation}
		\mathcal{J}_{t_0,x_0}:=\left\{(t,x)\in[0,\f)\times\R^d);x\cdot\xi\geq x_0\cdot\xi-\si_\e (t_0-t),\abs{x-x_0}\leq \si_\e(t-t_0),t\in[0,t_0]\right\},
	\end{equation}
	where $\si_\e=\si+\e$. We define $\mathcal{J}^n:=\left\{(t,x)\in\mathcal{J}_{t_0,x_0};\mbox{dist}((t,x),\pa\mathcal{J}_{t_0,x_0})\geq1/n\right\}$ where $\pa\mathcal{J}_{t_0,x_0}$ denotes the boundary of $\mathcal{J}_{t_0,x_0}$. Consider $\psi_n:[0,\f)\times\R^d\rr \R$ such that $\psi\in C^\f_c([0,\f)\times\R^d)$ satisfying 
	\begin{equation}
		\left.
		\begin{array}{ll}
			\psi_n(t,x)=1&\mbox{ if }(t,x)\in\mathcal{J}^n,\\
			0\leq 	\psi_n(t,x)\leq 1&\mbox{ if }(t,x)\in\mathcal{J}_{t_0,x_0}\setminus\mathcal{J}^n,\\
				\psi_n(t,x)=0&\mbox{ if }(t,x)\in\mathcal{J}^c_{t_0,x_0},
		\end{array}\right\}
	\end{equation} 
where $A^c$ denotes the complement of set $A$ in whole space. Taking $\varphi=\psi_n$ in \eqref{ineq:entropy} we obtain 
\begin{equation}
	\int\limits_{\pa\mathcal{J}_{t_0,x_0}}\Phi(s(t,y),\nu(t,y);\varrho(t,y),\textbf{m}(t,y))d\mathcal{S}_{t,y}\leq0,
\end{equation}
where $d\mathcal{S}_{t,y}$ is the surface measure on $\pa\mathcal{J}_{t_0,x_0}$ and $(s(t,y),\nu(t,y))$ is the outward normal at $(t,y)$. We observe that $\pa\mathcal{J}_{t_0,x_0}=\mathcal{B}_1\cup\mathcal{B}_2\cup\mathcal{B}_3$ where
\begin{align*}
	\mathcal{B}_1&=\{(0,x);x\cdot\xi\geq x_0\cdot\xi-\si_\e t_0,\abs{x-x_0}\leq \si_\e t_0\},\\
	\mathcal{B}_2&=\{(0,x);x\cdot\xi = x_0\cdot\xi-\si_\e (t_0-t)\},\\
	\mathcal{B}_3&=\{(0,x);x\cdot\xi \geq x_0\cdot\xi-\si_\e (t_0-t),\abs{x-x_0}= \si_\e (t_0-t)\}.
\end{align*}
Therefore, we have
\begin{equation*}
	(s(t,y),\nu(t,y))=\left\{
	\begin{array}{rl}
	(-1,0)&\mbox{ if }(t,y)\in\mathcal{B}_1,\\
	(\si_\e,-\xi)&\mbox{ if }(t,y)\in\mathcal{B}_2,\\
	\left(\si_\e,\frac{y-x_0}{\si_\e(t_0-t)}\right)	&\mbox{ if }(t,y)\in\mathcal{B}_3.
	\end{array}\right.
\end{equation*}
We also observe that $(t_0,x_0)$ is chosen such a way that $\abs{\varrho(t,x)-\overline{\varrho}}+\abs{\textbf{m}(t,x)}\leq \de$ for all $(t,x)\in \mathcal{J}_{t_0,x_0}$. Hence, by using \eqref{estimate-lower-Phi} we get a contradiction. This concludes the proof of Claim \ref{claim-1}.
\end{proof}
To complete the proof of Lemma \ref{lemma:fsp} let us consider a point $x_0\in\R^d$ such that $\abs{x_0}=R$. Set $\xi=x_0/\abs{x_0}$. By Claim \ref{claim-1} we have $(\varrho(t,x),\textbf{m}(t,x))=(\overline{\varrho},\textbf{0})$ for $x\cdot \xi\geq x_0\cdot\xi+\si t$. Note that $x_0\cdot\xi=\abs{x_0}=R$. We further observe that
\begin{equation}
\{(t,x)\in[0,\f)\times\R^d;\abs{x-x_0}\geq R+\si t\}=\bigcup\limits_{\xi\in\mathbb{S}^{d-1}}\{(t,x)\in[0,\f)\times\R^d;x\cdot\xi\geq R+\si t\}
\end{equation}
This completes the proof of Lemma \ref{lemma:fsp}.
\end{proof}
\subsection{Proof of Theorem \ref{theorem-cont}}
\begin{proof}[Proof of Theorem \ref{theorem-cont}:]
	Let us consider $F(t)$ and $M(t)$ defined as follows
	\begin{equation*}
		M(t):=\int\limits_{\R^d}(\varrho(t,x)-\bar{\varrho})\,dx\mbox{ and }F(t):=\int\limits_{\R^d}x\cdot \varrho\U\,dx.
	\end{equation*}
	Let $\{\eta_\e\}_{\e>0}$ be Friedrich mollifiers. Then, we consider $F_\e,M_\e(t)$ defined as follows
	\begin{align}
		F_\e(t)=\int\limits_{\R^d}x\cdot( \varrho\U)_\e(t,\cdot)\,dx\mbox{ and }M_\e(t)=\int\limits_{\R^d}(\varrho_\e(t,x)-\bar{\varrho})\,dx
	\end{align}
	Then by Fundamental Theorem of Calculus we have
	\begin{align}
		F_\e(t_2)-F_\e(t_1)=\int\limits_{t_1}^{t_2}\int\limits_{\R^d}x\cdot \pa_t( \varrho\U)_\e(t,\cdot)\,dxdt\mbox{ for }t_2>t_1>2\e.
	\end{align}
	We can mollify the system \eqref{eqn-isen-1}--\eqref{eqn-isen-2} with $\eta_\e$ to obtain
	\begin{align}
		\pa_t\varrho_\e+\dv_x(\varrho\U)_\e&=0\label{eq-1}\\
		\pa_t(\varrho\U)_\e+\dv_x\left(\varrho\U\otimes\U\right)_\e+\nabla_xp(\varrho)_\e&=0.\label{eq-2}
	\end{align}
	By using \eqref{eq-2} we get
	\begin{align}
		F_\e(t_2)-F_\e(t_1)&=\int\limits_{t_1}^{t_2}\int\limits_{\R^N}x\cdot \pa_t( \varrho\U)_\e(t,\cdot)\,dxdt\nonumber\\
		&=-\int\limits_{t_1}^{t_2}\int\limits_{\R^d}x\cdot\left[\dv_x\left(\varrho\U\otimes\U\right)_\e+\nabla_xp(\varrho)_\e\right]\,dx\nonumber\\
		&=\int\limits_{t_1}^{t_2}\int\limits_{\R^d}\left[\mathbb{I}_d:\left(\varrho\U\otimes\U\right)_\e+d(p(\varrho)_\e-p(\bar{\varrho}))\right]\,dx\nonumber\\
		&=\int\limits_{t_1}^{t_2}\int\limits_{\R^d}\left[\left(\varrho\abs{\U}^2\right)_\e+d(p(\varrho)_\e-p(\bar{\varrho}))\right]\,dx.	\label{eqn-F-mollified}
	\end{align}
	Similarly, with the help of \eqref{eq-1} we obtain
	\begin{equation}
		M_\e(t_2)-M_\e(t_1)=\int\limits_{t_1}^{t_2}\int\limits_{\R^d}\pa_t(\varrho_\e-\bar{\varrho})\,dxdt=-\int\limits_{t_1}^{t_2}\int\limits_{\R^d}\dv_x(\varrho\U)_\e\,dxdt=0.\label{eqn-M-mollified}
	\end{equation}
	Passing to the limit in \eqref{eqn-F-mollified}, \eqref{eqn-M-mollified} as $\e\rr0$, we obtain
	\begin{equation}
		F(t_2)-F(t_1)=\int\limits_{t_1}^{t_2}\int\limits_{\R^d}\left[\left(\varrho\abs{\U}^2\right)+d(p(\varrho)-p(\bar{\varrho}))\right]\,dxdt\mbox{ and }M(t_2)=M(t_1)\mbox{ for }t_2>t_1\geq0
	\end{equation}
	Subsequently, we have
	\begin{equation*}
		\int\limits_{B(t)}(\varrho(t,\cdot)-\bar{\varrho})\,dx=\int\limits_{\R^d}(\varrho(t,\cdot)-\bar{\varrho})\,dx
		=M(t)=M(0)
		=\int\limits_{\R^d}(\varrho_0(\cdot)-\bar{\varrho})\,dx.
	\end{equation*}
	Note that
	\begin{equation}
		\int\limits_{B(t)}\varrho(t,\cdot)\,dx=M(t)+\bar{\varrho}\abs{B(t)}=M(0)+\bar{\varrho}\abs{B(t)}.
	\end{equation}
     Then, by using Jensen's inequality we obtain 
	\begin{align}
		\int\limits_{B(t)}p(\varrho)\,dx=\kappa\int\limits_{B(t)}\varrho^\gamma(t,\cdot)\,dx
		&\geq \kappa\abs{B(t)}^{1-\gamma}\left(\int\limits_{B(t)}\varrho(t,\cdot)\,dx\right)^{\gamma}\nonumber\\
		&= \kappa\abs{B(t)}^{1-\gamma}\left(M(0)+\abs{B(t)}\bar{\varrho}\right)^{\gamma}\nonumber\\
		&\geq  \kappa\abs{B(t)}^{1-\gamma}\left(\abs{B(t)}\bar{\varrho}\right)^{\gamma}\nonumber\\
		&=  \int\limits_{B(t)}p(\bar{\varrho})\,dx.\label{inequality-pressure}
	\end{align}
	Since, $\varrho, \U\in C([0,T^*)\times\R^d)$, we can pass $t_1\rr0$ and put $t_2=t$ to obtain
	\begin{equation}
		F(t)=F(0)+\int\limits_{0}^{t}\int\limits_{\R^d}\left[\left(\varrho\abs{\U}^2\right)+d(p(\varrho)-p(\bar{\varrho}))\right]\,dxds.
	\end{equation}
	Note that supports of $\U$ and $p(\varrho)-p(\bar{\varrho})$ is contained in $B(t)$. Hence, the following function, $\Psi$  is well-defined, 
	\begin{equation}
		\Psi(s):= \int\limits_{\R^d}\left[\left(\varrho\abs{\U}^2\right)+d(p(\varrho)-p(\bar{\varrho}))\right](s,\cdot)\,dx.
	\end{equation}
     By our assumption $\Psi$ is continuous as well. Therefore, we have $F\in C^1(0,T^*)$ and 
	\begin{equation}
		F^\p(t)=\int\limits_{\R^d}\left[\left(\varrho\abs{\U}^2\right)+d(p(\varrho)-p(\bar{\varrho}))\right](t,\cdot)\,dx\mbox{ for }t\in(0,T^*).
	\end{equation}
	From \eqref{inequality-pressure} we further obtain
	\begin{equation}
		F^\p(t)\geq \int\limits_{\R^d} \varrho\abs{\U}^2(t,\cdot)\,dx\mbox{ for }t\in(0,T^*).\label{eq-3}
	\end{equation}
	By using finite speed of propagation, Lemma \ref{lemma:fsp}, we observe that
	\begin{equation}
		F(t)=\int\limits_{\R^d}x\cdot \varrho\U\,dx=\int\limits_{B(t)}x\cdot \varrho\U\,dx.
	\end{equation}
	By using H\"older's inequality, we get
	\begin{equation}
		F^2(t)\leq \left(\int\limits_{B(t)}\abs{x}^2\varrho\,dx\right)\left(\int\limits_{B(t)}\varrho\abs{\U}^2\,dx\right). \label{bound-1}
	\end{equation}
	We note that
	\begin{align}
		\int\limits_{B(t)}\abs{x}^2\varrho\,dx
		\leq (R+\si t)^2\int\limits_{B(t)}\varrho(t,\cdot)\,dx
		&=(R+\si t)^2\left[\int\limits_{B(t)}\bar{\varrho}\,dx+M(0)\right]\nonumber\\
		&=(R+\si t)^2\left[\int\limits_{B_0}\varrho_0\,dx-\int\limits_{B_0}\bar{\varrho}\,dx+\int\limits_{B(t)}\bar{\varrho}\,dx\right]\nonumber\\
		&\leq \omega_d(R+\si t)^{d+2}\norm{\varrho_0}_{L^\f(\R^d)}.\label{bound-2}
	\end{align}
	Combining \eqref{eq-3}, \eqref{bound-1} and \eqref{bound-2}, we get
	\begin{equation*}
		F^2(t)\leq \omega_d(R+\si t)^{d+2}F^\p(t).
	\end{equation*}
	Therefore, by Fundamental Theorem of Calculus, we have
	\begin{equation*}
		-\frac{1}{d+1}\left[\frac{1}{(R+\si t)^{d+1}}-\frac{1}{R^{d+1}}\right]\leq \omega_d\norm{\varrho_0}_{L^\f(\R^d)}\left[\frac{1}{F(0)}-\frac{1}{F(t)}\right],
	\end{equation*}
	or equivalently,
	\begin{equation*}
		\frac{\omega_d\norm{\varrho_0}_{L^\f(\R^d)}}{F(0)}\geq 	\frac{\omega_d\norm{\varrho_0}_{L^\f(\R^d)}}{F(t)}+\frac{1}{(d+1)R^{d+1}}-\frac{1}{d+1}\frac{1}{(R+\si t)^{d+1}}.
	\end{equation*}
	If $t\rr\f$, we have
	\begin{equation*}
		\frac{\omega_d\norm{\varrho_0}_{L^\f(\R^d)}}{F(0)}\geq 	\frac{1}{(d+1)R^{d+1}},
	\end{equation*}
	which contradicts with \eqref{condition-Sideris}.
\end{proof}

\section{Global existence of smooth solution}
	In this section, we are going to prove the global existence of smooth solution to \eqref{eqn-isen-1}--\eqref{eqn-isen-2} when initial data satisfying the condition \descref{hyp-1}{(G-1)}--\descref{hyp-3}{(G-3)}. Our proof is relying on the technique of \cite{Grassin}. We first consider the following vectorial Burgers equation,
\begin{equation}\label{eqn-Burgers-multi}
\left.\begin{array}{rl}
	\pa_t\V+\V\cdot\nabla_x\V&=0,\mbox{ for }x\in\R^d,t>0,\\
	\V(0,x)&=\U_0\mbox{ for }x\in\R^d.
\end{array}	\right\}
\end{equation}
By method of characteristics, we can solve the equation \eqref{eqn-Burgers-multi} for small enough time $t_0$ and till the time of existence we have 
\begin{equation}
	\V(t,X(t,x_0))=\U_0(x_0)\mbox{ where }X(t,x_0)=x_0+t\U_0(x_0).
\end{equation}
We can calculate 
\begin{equation*}
	\nabla_x\V(t,X(t,x_0))=(\mathbb{I}_d+t\nabla_x\U_0(x_0))^{-1}\nabla_x\U_0(x_0)\mbox{ for }t\in[0,t_0).
\end{equation*}
Due to the assumption \descref{hyp-2}{(G-2)}, we can see that $	\norm{\nabla_x\V(t,X(t,\cdot))}_{L^\f(\R)}\leq C_1\norm{\nabla_x\U_0}_{L^\f(\R^d)}$ for some $C_1>0$ independent of $t$. Hence, the solution $\V(t,\cdot)$ exists for all time $t>0$. We further have the following estimate from \cite{Grassin},
\begin{proposition}[Grassin, \cite{Grassin}]
	Let $\al>0$, $m>1+\frac{d}{2}$ and $\U_0:\R^d\rr\R^d$ such that the following holds
	\begin{description}
		\descitem{(A-1)}{A-1} $\nabla_x^2\U_0\in H^{m-1}(\R^d)$ and $\nabla_x\U_0\in L^\f(\R^d)$.
		
		\descitem{(A-2)}{A-2} $\inf\left\{\nabla_x\U_0(x):\xi\otimes\xi;\,\xi\in\mathbb{S}^{d-1},x\in\R^d\right\}\geq0$ where $\mathbb{S}^{d-1}=\{x\in\R^d;\,\abs{x}=1\}$. 
	\end{description}
	We define
	\begin{equation}
		\Omega_\al:=\{x\in \R^d;\,\inf\limits_{\xi\in\mathbb{S}^{d-1}}\nabla_x\U_0(x):\xi\otimes\xi\geq\al\}.
	\end{equation}
	Let $\overline{\U}$ be the unique smooth solution to \eqref{eqn-Burgers-multi} with initial data $\U_0$. Then the following holds,
	\begin{enumerate}
		\item $\nabla_x\overline{\U}=\frac{1}{1+t}\mathbb{I}_d+\frac{1}{(1+t)^2}\mathcal{R}(t,x)$ where $\mathcal{R}(t,x)$ is bounded on every compact subset of $\Omega_\al(t)$ defined as 
		\begin{equation}
			\Omega_\al(t):=\left\{x;\,x=x_0+t\U_0(x_0),x_0\in\Omega_\al\right\}.
		\end{equation}
		\item $\norm{\nabla_x^2\overline{\U}(t,\cdot)}_{L^2(\Omega_\al(t))}\leq c_j(1+t)^{\frac{d}{2}-j-1}$ for $j=2,\dots,m+1$.
		\item $\norm{\nabla_x^2\overline{\U}(t,\cdot)}_{L^\f(\Omega_\al(t))}\leq c_0(1+t)^{-3}$.
	\end{enumerate}
\end{proposition}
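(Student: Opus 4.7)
\medskip

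\noindent\textbf{Proof plan.} The plan is to solve \eqref{eqn-Burgers-multi} by characteristics and then extract explicit algebraic formulas for $\nabla_x^{j}\overline{\U}(t,X(t,x_0))$ that make the decay rates transparent. Writing $A(x_0) := \nabla_x \U_0(x_0)$ and $P(t,x_0) := \mathbb{I}_d + t A(x_0)$, hypothesis \descref{A-2}{(A-2)} gives $\xi^{T} P \xi \geq |\xi|^{2}$ for every $\xi$, so $P$ is globally invertible with $\|P^{-1}\| \leq 1$, and $X(t,x_0) = x_0 + t\U_0(x_0)$ is a $C^{1}$-diffeomorphism of $\R^{d}$. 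On $\Omega_\al$ the sharper bounds $\|P^{-1}\| \leq (1+\al t)^{-1}$ and $(1+\al t)^{d} \leq \det P \leq C_{0}(1+t)^{d}$ hold; these quantitative ingredients drive all three claims.

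For claim (1), differentiating $\overline{\U}(t,X) = \U_0(x_0)$ in $x_0$ gives $\nabla_x \overline{\U}(t,X) = A P^{-1}$, and the elementary identity
\begin{equation*}
A P^{-1} - \tfrac{1}{1+t}\mathbb{I}_d = \tfrac{1}{1+t}(A - \mathbb{I}_d) P^{-1}
\end{equation*}
forces the choice $\mathcal{R}(t, X(t,x_0)) := (1+t)(A(x_0)-\mathbb{I}_d)P(t,x_0)^{-1}$. On $\Omega_\al$ the uniform bound $(1+t)\|P^{-1}\| \leq \max(1,1/\al)$, combined with $\|A\|_{L^{\infty}}<\infty$ from \descref{A-1}{(A-1)}, gives boundedness of $\mathcal{R}$ uniformly in $t$ on all of $\Omega_\al(t)$, which is stronger than the claimed compact boundedness.

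For claim (3), a second chain-rule computation together with the cancellation $tAP^{-1} = \mathbb{I}_d - P^{-1}$ yields the clean identity
\begin{equation*}
(\nabla_x^{2}\overline{\U})(t, X(t,x_0)) = P^{-1} \cdot \nabla_{x_0}^{2}\U_0(x_0) \cdot (P^{-1}, P^{-1}),
\end{equation*}
with three factors of $P^{-1}$; hence $|\nabla_x^{2}\overline{\U}(t,X)| \leq (1+\al t)^{-3}|\nabla^{2}\U_0|$, and the stated bound follows from Morrey's embedding $H^{m-1} \hookrightarrow L^{\infty}$ under $m > 1 + d/2$. For claim (2), I will extend this by induction on $j$ to show that $\nabla_x^{j}\overline{\U}(t,X)$ is a polynomial in $\nabla_{x_0}^{k}\U_0$ ($1 \leq k \leq j$) with exactly $j+1$ factors of $P^{-1}$ per monomial. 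Changing variables then yields
\begin{equation*}
\|\nabla_x^{j}\overline{\U}\|_{L^{2}(\Omega_\al(t))}^{2} \leq C(1+t)^{-2(j+1)}\, C_{0}(1+t)^{d}\, \|\mathcal{Q}_j\|_{L^{2}(\Omega_\al)}^{2},
\end{equation*}
where $\mathcal{Q}_j$ is a polynomial in $\nabla_{x_0}^{k}\U_0$, $1\leq k\leq j$, with finite $L^{2}$ norm by \descref{A-1}{(A-1)} and the algebra property of $H^{m-1}$ for $m-1 > d/2$; this produces the exponent $d/2 - j - 1$.

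The main obstacle, though mechanical, is the bookkeeping of the induction: one must verify that each new $x$-differentiation contributes exactly one additional $P^{-1}$, which ultimately traces to the fact that the leading term $\tfrac{1}{1+t}\mathbb{I}_d$ in claim (1) is $x$-independent and is therefore killed by every subsequent differentiation, producing one extra $(1+t)^{-1}$ at each level. The delicate point is that this extra decay must exactly offset the volume growth $\det P \lesssim (1+t)^{d}$ coming from the spreading of characteristics; the arithmetic $-2(j+1) + d = 2(d/2 - j - 1)$ confirms that these two effects balance precisely as the proposition asserts.
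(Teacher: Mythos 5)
Note that the paper you are reading only cites this proposition from Grassin's article and does not reprove it, so there is no in-text argument to measure your sketch against; I assess it on its own terms. Your ingredients are correct and are the natural ones: on $\Omega_\al$ the coercivity $\xi^{T}P(t,x_0)\xi\geq(1+\al t)|\xi|^{2}$ does give $\norm{P^{-1}}\leq(1+\al t)^{-1}$ (pair $P\eta=\zeta$ with $\eta$ and use Cauchy--Schwarz), the Jacobian bound $\det P\leq C_{0}(1+t)^{d}$ follows from $\nabla_x\U_0\in L^{\f}$, and the identity $tAP^{-1}=\mathbb{I}_d-P^{-1}$ (i.e.\ $tA=P-\mathbb{I}_d$) is exactly what makes the choice of $\mathcal{R}$ and the three-$P^{-1}$ formula for $\nabla_x^{2}\overline{\U}$ come out cleanly, so claims (1) and (3) are complete as written.

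There is, however, a real gap in the inductive step you propose for claim (2). The assertion that $\nabla_x^{j}\overline{\U}(t,X(t,\cdot))$ is a polynomial in the $\nabla_{x_0}^{k}\U_0$ with \emph{exactly} $j+1$ factors of $P^{-1}$ per monomial is already false at $j=3$: differentiating a factor of $P^{-1}$ produces $\nabla_{x_0}P^{-1}=-t\,P^{-1}(\nabla_{x_0}^{2}\U_0)P^{-1}$, i.e.\ two new $P^{-1}$'s \emph{and} an explicit power of $t$, so $\nabla_x^{3}\overline{\U}$ contains monomials with five $P^{-1}$'s and a factor of $t$ alongside the clean monomial $P^{-1}(\nabla_{x_0}^{3}\U_0)(P^{-1},P^{-1},P^{-1})$. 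Your heuristic explanation (that the $x$-independence of the leading $\tfrac{1}{1+t}\mathbb{I}_d$ contributes one extra $(1+t)^{-1}$ per level) names the right phenomenon but not the operative mechanism. The invariant you actually want to carry through the induction is: every monomial of $\nabla_x^{j}\overline{\U}(t,X(t,\cdot))$ carries $j+1+a$ factors of $P^{-1}$ and exactly $a$ factors of $t$, for some $a\geq0$, with coefficients built from $\nabla_{x_0}^{k}\U_0$, $2\leq k\leq j$. The decay rate then follows from $t^{a}\norm{P^{-1}}^{j+1+a}=(t\norm{P^{-1}})^{a}\norm{P^{-1}}^{j+1}\leq \al^{-a}(1+\al t)^{-(j+1)}$ on $\Omega_\al$, i.e.\ one uses the uniform boundedness of $tP^{-1}$, not a fixed count of $P^{-1}$'s. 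Once the induction hypothesis is stated in this sharper form, the rest of your argument — the change of variables with $\det P\leq C_0(1+t)^{d}$ and the $L^2$ control of $\mathcal{Q}_j$ via $H^{m-1}\hookrightarrow L^{\f}$ together with $\nabla_{x_0}^{j}\U_0\in L^2$ for $j\leq m+1$ — closes claim (2) exactly as you intend.
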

As in \cite{Grassin}, following \cite{KMU} we consider the quantity $\pi$ defined as $\pi=\sqrt{\frac{\gamma-1}{4\gamma}}\varrho^\frac{\gamma}{2}$ and the symmetrization of \eqref{eqn-isen-1}--\eqref{eqn-isen-2} as below,
\begin{align}
	(\pa_t+\U\cdot\nabla_x)\pi+C_1\pi\dv_x\U&=0,\label{sym-E-1}\\
	(\pa_t+\U\cdot\nabla_x)\U+C_1\pi\nabla_x\pi&=0,\label{sym-E-2}
\end{align}
with $C_1=\frac{\gamma-1}{2}$. The system \eqref{sym-E-1}--\eqref{sym-E-2} can be written as quasi-linear form,
\begin{equation}\label{eqn-quasi-lin}
	\pa_t U+\sum\limits_{j=1}^{d}A_j(U)\pa_jU=0\mbox{ where }U=(\varrho,\U)^T,
\end{equation}
and $A_j(U)\in\mathbb{M}_{d+1}(\R)$ is defined as 
\begin{equation}
	A_j(U)=u_j\mathbb{I}_{d+1}+C_1\pi e_j\otimes e_j\mbox{ for }j=1,\dots,d
\end{equation}
and $\{e_0,e_1,\cdots,e_d\}$ is the standard basis of $\R^{d+1}$. Note that $\U_0$ is not integrable. Therefore, we approximate $\U_0$ by $\U_0^\la$ as $\U^\la_0=\psi_\la\U_0$ where $\psi\in C_0^\f(\R^d)$ such that
\begin{equation*}
	\psi_\la(x)=1\mbox{ for }\abs{x}\leq \la.
\end{equation*}
We choose $\la>0$ large enough such that $supp(\varrho_0)\subset \{x\in\R^d;\abs{x}\leq \la\}$.  For system \eqref{eqn-quasi-lin} with initial data $(\pi_0,\U_0^\la)$, smooth solution exists  in $[0,t_0)$ for small enough $t_0>0$ (see \cite{Chemin}). Let us consider $\mathcal{Q}$ defined as  
\begin{equation*}
	\mathcal{Q}_\al=\{(t,x)\in[0,\f)\times\R^d;x=y+t\nabla_x\U_0(y),y\in \Omega_\al,\abs{y}\leq\la\}.
\end{equation*}
Similar to \cite{Grassin}, we conclude that $(\pi(t,x),\U(t,x))=(0,\V(t,x))$ for $(t,x)\in [0,\f)\times\left(\R^d\setminus \mathcal{Q}_\al\right)$ by using the following  uniqueness result.
\begin{proposition}[Grassin ,\cite{Grassin}]
	Let $T_0>0$ and $W=(\pi,\U)\in C^1([0,T_0];H^{m-1}(\R^d))\cap C([0,T_0]H^{m}(\R^d))$ be a classical solution to \eqref{sym-E-1}--\eqref{sym-E-2} corresponding to initial data $W_0\in H^m(\R^d)$ for some $m>1+\frac{d}{2}$. Let $\overline{W}\in C^1([0,T_0]\times\R^d)$ also be a solution to \eqref{sym-E-1}--\eqref{sym-E-2} corresponding to initial data $\overline{W}_0$ such that $W_0(x)=\overline{W}_0(x)$ for $x\in B(x_0,r)$ for some $x_0\in\R^d,r>0$. Consider $M>0$ and $Q_T>0$ defined as 
	\begin{align*}
		M&=\sup\left\{(C_1\abs{\pi}+\abs{\U})(t,x);\,(t,x)\in D\right\}\mbox{ where }D=B(x_0,r)\times[0,T_0],\\
		Q_T&=\left\{(t,x);\,t\in[0,T],x\in B(x_0,r-Mt)\right\}\mbox{ for }T\in[0,T_1]\mbox{ with }T_1=\min\left\{T_0,r/M\right\}.
	\end{align*}
	Then we have
	\begin{equation}
		W(t,x)=\overline{W}(t,x)\mbox{ for }(t,x)\in Q_T.
	\end{equation}
\end{proposition}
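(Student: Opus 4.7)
The plan is to adapt the classical energy-estimate on a truncated cone of dependence for symmetric hyperbolic systems. First I would set $V:=W-\overline{W}$; by hypothesis $V(0,\cdot)\equiv 0$ on $B(x_0,r)$. Subtracting the symmetrized system \eqref{sym-E-1}--\eqref{sym-E-2} written for $W$ and for $\overline{W}$, and exploiting the smoothness of $A_j$ in $W$, I obtain a linear equation of the form
\[
\partial_t V + \sum_{j=1}^d A_j(W)\,\partial_j V = R(W,\overline{W},\nabla_x\overline{W})\,V,
\]
where the matrix-valued source $R$ is pointwise bounded on $D=B(x_0,r)\times[0,T_0]$ since $\overline{W}\in C^1$. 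The crucial structural fact, inherited from the symmetrization of \cite{KMU}, is that each $A_j(W)$ is symmetric.

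Next I would run the energy identity on the truncated cone $Q_\tau$ for $\tau\in[0,T_1]$: taking the inner product with $V$ and using symmetry of the $A_j$, the principal part becomes a spacetime divergence, and the divergence theorem on $Q_\tau$ converts everything into boundary terms on three pieces. The bottom face contributes zero because $V$ vanishes there, the top face $\{\tau\}\times B(x_0,r-M\tau)$ produces $\tfrac12\int|V(\tau,\cdot)|^2\,dx$, and the lateral frustum $\Sigma_\tau=\{(t,x):|x-x_0|=r-Mt,\,0\le t\le\tau\}$ contributes an integrand proportional, with positive weight, to $-M|V|^2+V\cdot\bigl(\sum_{j}A_j(W)\nu_j\bigr)V$, where $\nu=(x-x_0)/|x-x_0|$.

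The hard part will be showing this lateral integrand is non-positive, which is where the specific form of $M$ enters. A direct computation shows that the eigenvalues of $\sum_j A_j(W)\nu_j$ are $\U\cdot\nu$ (with multiplicity $d-1$) and $\U\cdot\nu\pm C_1|\pi|$, which are precisely the acoustic characteristic speeds of the isentropic Euler system; hence its spectral radius is at most $|\U|+C_1|\pi|\le M$ on $D$ by the very definition of $M$ in the statement. This yields $V\cdot\bigl(\sum_j A_j(W)\nu_j\bigr)V\le M|V|^2$, so the lateral contribution has the favorable sign and may be discarded. What remains is
\[
\int_{B(x_0,r-M\tau)}|V(\tau,x)|^2\,dx \le C\int_0^\tau\int_{B(x_0,r-Ms)}|V(s,x)|^2\,dx\,ds \quad\text{for all }\tau\in[0,T_1],
\]
and Gronwall's inequality forces $V\equiv 0$ on $Q_T$, concluding the argument. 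Beyond this spectral bound, the only care needed is that the regularity $W\in C^1$ on $D$ (which follows from $C^1([0,T_0];H^{m-1})\cap C([0,T_0];H^m)$ together with $m>1+d/2$ and Sobolev embedding) makes all integrations by parts legitimate.
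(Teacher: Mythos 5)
The paper does not actually give a proof of this proposition: it is stated as a citation of Grassin and used as a black box to conclude that $(\pi,\U)=(0,\V)$ outside $\mathcal{Q}_\al$. That said, your argument is the standard finite-speed-of-propagation energy estimate on a backward characteristic cone for symmetric hyperbolic systems, which is essentially what one finds in Grassin and in every textbook treatment (e.g.\ Dafermos); the substance is correct and complete. In particular, the two pivotal observations are exactly the right ones: (i) since each $A_j$ is an \emph{affine} function of $U$, the difference $V=W-\overline W$ satisfies a linear symmetric hyperbolic system with coefficient matrices $A_j(W)$ and a zero-order source proportional to $A_j(V)\partial_j\overline W$, bounded because $\overline W\in C^1$; and (ii) the eigenvalues of $\sum_j A_j(W)\nu_j$ are $\U\cdot\nu$ (multiplicity $d-1$) and $\U\cdot\nu\pm C_1\pi$, so its spectral radius is dominated by $|\U|+C_1|\pi|\le M$ on $D$, which is precisely what makes the lateral-boundary flux on the cone $\{\,|x-x_0|=r-Mt\,\}$ dispensable.

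One cosmetic remark: your lateral integrand is written as $-M|V|^2+V\cdot\bigl(\sum_j A_j(W)\nu_j\bigr)V$, whereas the spacetime divergence theorem with outward normal $(1+M^2)^{-1/2}(M,\nu)$ naturally produces $M|V|^2+V\cdot\bigl(\sum_j A_j(W)\nu_j\bigr)V$; after moving the lateral flux to the source side one gets the negative of that. Either way, the full spectral bound $\bigl|V\cdot\bigl(\sum_j A_j(W)\nu_j\bigr)V\bigr|\le M|V|^2$ gives the needed sign, so nothing breaks; just be aware that both one-sided inequalities are being used, not only the upper bound you wrote. Finally, note that the paper's displayed formula $A_j(U)=u_j\mathbb{I}_{d+1}+C_1\pi\, e_j\otimes e_j$ contains a typo (the coupling is $e_0\otimes e_j+e_j\otimes e_0$); your spectral computation implicitly uses the correct (symmetric, off-diagonal) form, which is the right thing to do.
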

Now, from system \eqref{eqn-Burgers-multi} and \eqref{sym-E-1}--\eqref{sym-E-2} we obtain
\begin{equation}\label{sym-diff-u-v}
	\left.\begin{array}{rl}
		(\pa_t+\W\cdot\nabla_x)\pi+C_1\pi\dv_x\W&=-\nabla_x\V\cdot\pi-C_1\pi\dv_x\V,\\
		(\pa_t+\W\cdot\nabla_x)\W+C_1\pi\nabla_x\pi&=-(\V\cdot\nabla_x)\W-(\W\nabla_x)\V,
	\end{array}	\right\}
\end{equation}
where $\W=\U-\V$. Let us denote $V=(0,\V)$ and $U=(\pi,\W)$. Then $U$ satisfies
\begin{equation}\label{eqn-U}
	\pa_tU+\sum\limits_{j=1}^dF_j(U)\pa_jU=-G(\nabla_xV,U)-\sum\limits_{j=1}^dv_j\pa_jU,
\end{equation}
where $G(\nabla_xV,U)=\begin{pmatrix}
	C_1\pi\dv_x\V\\
	\W\cdot\nabla_x\V
\end{pmatrix}$. We define
\begin{equation}
	\Gamma_k(t):=\left[\int\limits_{\R^d}\nabla_x^kU(t,x)\cdot\nabla_x^kU(t,x)\,dx\right]^\frac{1}{2}.
\end{equation}
In order to get the decay estimate for $\W$ we consider,
\begin{equation}
	\Gamma(t)=\sum\limits_{k=0}^{m}(1+t)^{\de_k}\Gamma_k(t)\mbox{ where }\de_k=k+b-a
\end{equation}
 and $b$ is defined as below 
\begin{equation}
	b=\left\{\begin{array}{ll}
		1-\frac{d}{2}&\mbox{ if }\de\geq\de_c=1+\frac{2}{d},\\
		\frac{\de-1}{2}d-\frac{d}{2} &\mbox{ if }1<\de<\de_c.
	\end{array}\right.
\end{equation}
We will choose $a$ later. By using \eqref{eqn-U} we can obtain
\begin{equation*}
	\frac{1}{2}\frac{d}{dt}\int\limits_{\R^d}\nabla_x^kU(t,x)\cdot\nabla_x^kU(t,x)\,dx=\int\limits_{\R^d}\mathcal{K}_k(U)(t,x)\,dx+\int\limits_{\R^d}\mathcal{A}_k(U,V)(t,x)\,dx,
\end{equation*}
where 
\begin{align*}
	\mathcal{K}_k(U)&=-\nabla_x^kU\cdot\left[\nabla_x^k\left(\sum\limits_{j=1}^{d}A_j(U)\pa_jU\right)-\sum\limits_{j=1}^{d}A_j(U)\pa_j\nabla_x^kU\right]+\frac{1}{2}\sum\limits_{j=1}^{d}\nabla_x^kU\cdot\pa_jA_j(U)\nabla_x^kU,\\
	\mathcal{A}_k(U,V)&=-\nabla_x^kU\cdot\nabla_x^k\left(B(\nabla_xV,U)\right)-\nabla_x^kU\cdot\left[\nabla_x^k\left(\V\cdot\nabla_xU\right)-\V\cdot\nabla_x\nabla_x^kU\right]\\
	&+\frac{1}{2}\dv_x(\V)\nabla_x^kU\cdot\nabla_x^kU.
\end{align*}
By using the similar argument as in \cite[Proposition 3]{Grassin}, we show that
\begin{proposition}\label{prop-2}
	Let $m>1+\frac{d}{2}$. Then there exists $C>0$ depending on $m,d$ and $\bar{C}>0$ depending on $m,d,\al,\norm{\nabla_x\U_0}_{L^\f(\R^d)},\norm{\nabla_x^2\U_0}_{H^{m-1}(\R^d)}$ such that the following holds,
	\begin{align*}
		\abs{\int\limits_{\R^d}\mathcal{K}_k(t,x)\,dx}&\leq C\norm{\nabla_xU}_{L^\f(\R^d)}\Gamma_k^2(t),\\
		\frac{k+s}{1+t}\Gamma_k^2(t)+\int\limits_{\R^d}\mathcal{A}_k(U,V)(t,x)\,dx&\leq \bar{C}\Gamma_k(t)\Gamma(t)(1+t)^{-\de_k-2}.
	\end{align*}
\end{proposition}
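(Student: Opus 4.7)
\noindent\textbf{Proof plan for Proposition \ref{prop-2}.}
The proof splits naturally into two independent estimates, one for $\mathcal{K}_k$ (purely nonlinear, no background) and one for $\mathcal{A}_k$ (interaction with the Burgers solution $\V$). Throughout, I will freely use Moser--type commutator and product estimates in $H^m(\R^d)$, together with Sobolev embedding $H^m \hookrightarrow L^\infty$ for $m>1+\frac{d}{2}$, which is what forces the regularity threshold.

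\emph{Step 1: The estimate for $\mathcal{K}_k$.}
The quantity $\mathcal{K}_k$ consists of two pieces: the commutator
\[
[\nabla_x^k, A_j(U)]\pa_j U,
\]
paired against $\nabla_x^k U$, and a symmetric lower-order correction $\tfrac{1}{2}\pa_j A_j(U)\,|\nabla_x^k U|^2$. Since $A_j(U)=u_j\mathbb{I}_{d+1}+C_1\pi\, e_j\otimes e_j$ is smooth (in fact linear) in $U$, the Kato--Ponce commutator estimate gives
\[
\norm{[\nabla_x^k, A_j(U)]\pa_j U}_{L^2(\R^d)}
\leq C\left(\norm{\nabla_x A_j(U)}_{L^\infty}\norm{\nabla_x^k U}_{L^2}+\norm{\nabla_x^k A_j(U)}_{L^2}\norm{\nabla_x U}_{L^\infty}\right),
\]
and both factors are controlled by $C\norm{\nabla_x U}_{L^\infty}\Gamma_k(t)$ (using the linearity of $A_j$). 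The symmetric correction $\pa_j A_j(U)$ is pointwise bounded by $C\abs{\nabla_x U}$, so it produces exactly $C\norm{\nabla_x U}_{L^\infty}\Gamma_k^2(t)$. Cauchy--Schwarz then yields the first inequality.

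\emph{Step 2: Splitting $\mathcal{A}_k$ and extracting the $\frac{k+s}{1+t}$ term.}
The source $G(\nabla_x V,U)$ is bilinear in $\nabla_x V$ and $U$, while the convective piece $\V\cdot\nabla_x U$ produces a standard commutator with $\nabla_x^k$. The central observation is the decomposition
\[
\nabla_x\V(t,x)=\frac{1}{1+t}\,\mathbb{I}_d+\frac{1}{(1+t)^2}\,\mathcal{R}(t,x),
\]
valid on the support of $U$ thanks to hypothesis \descref{hyp-3}{(G-3)} and the fact that $\mbox{supp}(U(t,\cdot))\subset\Omega_\al(t)$. The isotropic part $\frac{1}{1+t}\mathbb{I}_d$, when contracted against $\nabla_x^k U\cdot\nabla_x^k U$ in the terms $\nabla_x^k U\cdot\nabla_x^k G(\nabla_x V,U)$ and $\tfrac{1}{2}\dv_x(\V)\abs{\nabla_x^k U}^2$, contributes a \emph{coercive} term of the form $-\frac{k+s}{1+t}\Gamma_k^2(t)$ for an explicit $s$ depending on $\gamma, d$ and the structure of $G$; this is the term moved to the left-hand side.

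\emph{Step 3: Bounding the remainder by $\bar C\,\Gamma_k(t)\Gamma(t)(1+t)^{-\de_k-2}$.}
All remaining contributions fall in one of two classes:
\begin{enumerate}[(i)]
    \item products of $\nabla_x^j U$ ($j\leq k$) with $\nabla_x^{k-j+1}\V$ (from the commutator with $\V\cdot\nabla_x$), which for $k-j+1\geq 2$ inherit the $L^2$ decay $\norm{\nabla_x^{k-j+2}\V}_{L^2}\lesssim(1+t)^{d/2-(k-j+2)-1}$ and the $L^\infty$ decay $\norm{\nabla_x^2\V}_{L^\infty}\lesssim(1+t)^{-3}$ from Grassin's bounds;
    \item products involving $\mathcal{R}/(1+t)^2$ from the decomposition of $\nabla_x\V$, which carry an automatic $(1+t)^{-2}$ factor.
\end{enumerate}
In each case I would apply Gagliardo--Nirenberg interpolation to distribute derivatives between the $L^2$ and $L^\infty$ factors, pair the higher-derivative $U$-factor with $\Gamma_j(t)$ for appropriate $j$, and absorb the weight $(1+t)^{\de_j}$ built into the definition of $\Gamma(t)$. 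The arithmetic of exponents is engineered precisely through the choice $\de_k=k+b-a$ so that every product of the form $(1+t)^{-\text{(decay of }\V\text{)}}(1+t)^{-\de_j}(1+t)^{-\de_{k'}}$ collapses to $(1+t)^{-\de_k-2}$.

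\emph{Main obstacle.} The routine part is the commutator algebra; the delicate part is verifying, term by term, that the exponent accounting in Step 3 works in both regimes $\gamma\geq \gamma_c$ and $1<\gamma<\gamma_c$ (which toggle the definition of $b$), and that the integrals in (i) genuinely land in $L^2(\Omega_\al(t))$ rather than on all of $\R^d$ — this is where finite speed of propagation and the support localization $\mbox{supp}(U(t,\cdot))\subset\Omega_\al(t)$ are essential, since the Grassin decay estimates on $\V$ are only available on $\Omega_\al(t)$.
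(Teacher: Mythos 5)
The paper does not actually prove Proposition~\ref{prop-2}: it is stated after the sentence ``By using the similar argument as in [Grassin, Proposition 3], we show that,'' and no argument is given. So the comparison is really between your sketch and Grassin's, not the paper's. Against that benchmark, your overall plan — Kato--Ponce/Moser commutator estimates for $\mathcal{K}_k$; the Grassin decomposition $\nabla_x\V=\frac{1}{1+t}\mathbb{I}_d+\frac{1}{(1+t)^2}\mathcal{R}$, the $L^2$ and $L^\f$ decay of $\nabla_x^j\V$ on $\Omega_\al(t)$, and the weight arithmetic with $\de_k=k+b-a$ for $\mathcal{A}_k$; the crucial observation that the decay bounds on $\V$ live only on $\Omega_\al(t)$ so support localization is needed — is the right skeleton, and Step~1 is essentially fine since $A_j(U)$ is linear in $U$.

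There is, however, one concrete slip in Step~2 that would derail the estimate as you've written it. You attribute the coercive term $\frac{k+s}{1+t}\Gamma_k^2$ to the contributions of $\nabla_x^kU\cdot\nabla_x^k G(\nabla_x V,U)$ and $\frac{1}{2}\dv_x(\V)\abs{\nabla_x^kU}^2$ alone. But both of those pieces produce only $\mathcal{O}\!\left(\frac{1}{1+t}\right)\Gamma_k^2$ with $k$-independent constants: in $\nabla_x^kG(\nabla_xV,U)$ the single Leibniz term $G(\nabla_xV,\nabla_x^kU)$ pairs with $\nabla_x^kU$ to give a constant times $\frac{1}{1+t}\Gamma_k^2$, and $\dv_x\V\approx\frac{d}{1+t}$ gives $\frac{d}{2(1+t)}\Gamma_k^2$. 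The factor $k$ can only come from the commutator $\nabla_x^k(\V\cdot\nabla_xU)-\V\cdot\nabla_x\nabla_x^kU$: the $\abs{\beta}=1$ Leibniz terms are $\sum_i\al_i\,\pa_i\V\cdot\nabla_x\pa^{\al-e_i}U$, and inserting the isotropic part $\pa_i v_l=\frac{\delta_{il}}{1+t}$ collapses them to $\frac{\abs{\al}}{1+t}\pa^\al U=\frac{k}{1+t}\nabla_x^kU$, which paired with $\nabla_x^kU$ is exactly where the $\frac{k}{1+t}\Gamma_k^2$ lives. Your Step~3(i) in fact implicitly excludes this term (you impose $k-j+1\geq2$), so it is orphaned: it is named neither in Step~2 nor in Step~3. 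You need to state explicitly that Step~2 splits the \emph{commutator} via the Grassin decomposition as well, with the $\frac{1}{1+t}\mathbb{I}_d$ part providing the $\frac{k}{1+t}$ coercivity and only the $\mathcal{R}/(1+t)^2$ part joining the remainder of Step~3(ii). With that correction the exponent bookkeeping has a chance of closing; without it, the term $\frac{k}{1+t}\Gamma_k^2$ would sit on the wrong side and the estimate would fail for large $k$.
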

Now, with the help of above estimates, we are going to prove Proposition \ref{theorem:global-existence}. The argument is similar to the proof of \cite[Theorem 1]{Grassin}.
\begin{proof}[Proof of Proposition \ref{theorem:global-existence}:] By using Proposition \ref{prop-2} we have
	\begin{equation}
		\frac{1}{2}\frac{d}{dt}\Gamma_k^2(t)+\frac{k+s}{1+t}\Gamma_k^2\leq C\norm{\nabla_xU}_{L^\f(\R^d)}\Gamma_k^2(t)+\bar{C}\Gamma_k(t)\Gamma(t)(1+t)^{-\de_k-2}.
	\end{equation}
Setting $a=1+s+\frac{d}{2}>1$, we get
\begin{equation}
	\frac{d\Gamma(t)}{dt}+\frac{a}{1+t}\Gamma(t)\leq C(\Gamma(t))^2+\frac{\bar{C}}{(1+t)^2}\Gamma(t).
\end{equation}
By \cite[Proposition 4]{Grassin}, there exists $\e_0>0$ such that $\Gamma(t)\leq C_3(1+t)^{-a}$ for some $C_3>0$ provided $\Gamma(0)<\e_0$. This completes the proof of Proposition \ref{theorem:global-existence}.
\end{proof}

\bigskip
\noindent\textbf{Acknowledgement.} SSG would like to express thanks to the Department of Atomic Energy, Government of India, under project no. 12-R\&D-TFR-5.01-0520 for support. AJ acknowledges the support of Fondation Sciences Math\'ematiques de Paris-FSMP. 

\bigskip
\noindent\textbf{Disclosure statement:} On behalf of all authors, the corresponding author states that there is no conflict of interest.


\begin{thebibliography}{99}
	
	\bibitem{Adams}
	\newblock R. A. Adams and J. J. F. Fournier,
	\newblock Sobolev spaces. Second edition. 
	\newblock Pure and Applied Mathematics (Amsterdam), 140. Elsevier/Academic Press, Amsterdam, 2003. xiv+305 pp.
	
	\bibitem{BGSTW}
	\newblock C. Bardos, P. Gwiazda, A. \'Swierczewska-Gwiazda, E. S. Titi and E. Wiedemann,
	\newblock On the extension of Onsager's conjecture for general conservation laws.
	\newblock {\em J. Nonlinear Sci.} 29(2019), no.2, 501--510.
	
	
	\bibitem{Bressan-Liu-Yang}
	\newblock A. Bressan, T.-P. Liu and T. Yang,
	\newblock $L^1$ stability estimates for $n\times n$ conservation laws. 
	\newblock{\em Arch. Ration. Mech. Anal.} 149 (1999), no. 1, 1--22.
	
	\bibitem{Bressan-WFT}
	\newblock A. Bressan,
	\newblock Global solutions of systems of conservation laws by wave-front tracking.
	\newblock{\em J. Math. Anal. Appl.} 170 (1992), no. 2, 414--432.
	
	\bibitem{Bressan-book}
	\newblock A. Bressan, 
	\newblock Hyperbolic systems of conservation laws. The one-dimensional Cauchy problem. 
	\newblock Oxford Lecture Series in Mathematics and its Applications, 20. Oxford University Press, Oxford, 2000. xii+250 pp.
	
	\bibitem{Brezis-book}
	\newblock H. Brezis,
	\newblock Functional analysis, Sobolev spaces and partial differential equations. 
	\newblock Universitext. Springer, New York, 2011. xiv+599 pp.
	
	\bibitem{BuckShkVic}
	\newblock T. Buckmaster, S. Shkoller and V. Vicol, 
	\newblock Formation of shocks for 2D isentropic compressible Euler. 
	\newblock{\em Comm. Pure Appl. Math.} 75 (2022), no. 9, 2069--2120.
	


     \bibitem{Chae-Ha}
     \newblock D. Chae and S.-Y. Ha,
     \newblock On the formation of shocks to the compressible Euler equations.
     \newblock{\em Commun. Math. Sci.} 7 (2009), no. 3, 627--634.

     \bibitem{Chemin}
     \newblock J.-Y. Chemin,
     \newblock Dynamique des gaz \`a masse totale finie.
     \newblock{\em Asymptotic Anal.} 3 (1990), no. 3, 215--220.
	
	\bibitem{CDK}
	\newblock E. Chiodaroli, C. De Lellis and O. Kreml,
	\newblock Global ill-posedness of the isentropic system of gas dynamics.
	\newblock{\em Comm. Pure Appl. Math.} 68 (2015), no. 7, 1157--1190.
	
	
	
	\bibitem{Christodoulou-Miao}
	\newblock D. Christodoulou and S. Miao,
	\newblock Compressible flow and Euler's equations.
	\newblock{\em Surveys of Modern Mathematics, 9. International Press, Somerville, MA; Higher Education Press, Beijing}, 2014. x+iv+583 pp.
	
	\bibitem{Christodoulou}
	\newblock D. Christodoulou,
	\newblock The formation of shocks in 3-dimensional fluids.
	\newblock {\em EMS Monographs in Mathematics. European Mathematical Society (EMS), Z\"urich}, 2007. viii+992 pp.
	
	\bibitem{Christodoulou-ems}
	\newblock D. Christodoulou,
	\newblock The shock development problem.
	\newblock {\em EMS Monographs in Mathematics. European Mathematical Society (EMS), Z\"urich}, 2019. ix+920 pp.
	
	\bibitem{Dafermos}
	\newblock C. M. Dafermos,
	\newblock Hyperbolic conservation laws in continuum physics. 
	\newblock {\em Grundlehren der mathematischen Wissenschaften [Fundamental Principles of Mathematical Sciences]}, 325. Springer-Verlag, Berlin, 2000. xvi+443 pp. 
	
	\bibitem{DK}
	\newblock C. De Lellis, and H. Kwon,
	\newblock On nonuniqueness of Hölder continuous globally dissipative Euler flows.
	\newblock {\em Anal. PDE} 15 (2022), no. 8, 2003--2059.
	
	\bibitem{Dong}
	\newblock J. Dong,
	\newblock Blowup for the compressible isothermal Euler equations with non-vacuum initial data.
	\newblock{\em Appl. Anal.} 99 (2020), no. 4, 585--595.
	
	\bibitem{FGJ}
	\newblock E. Feireisl, S. S. Ghoshal and A. Jana,
	\newblock On uniqueness of dissipative solutions to the isentropic Euler system.
	\newblock{\em Comm. Partial Differential Equations} 44 (2019), no. 12, 1285--1298.
	
	\bibitem{FGGW}
	\newblock E. Feireisl, P. Gwiazda, A. \'Swierczewska-Gwiazda and E. Wiedemann,
	\newblock Regularity and energy conservation for the compressible Euler equations. 
	\newblock{\em Arch. Ration. Mech. Anal.} 223 (2017), no. 3, 1375--1395.
	
	\bibitem{GJK}
	\newblock S. S. Ghoshal, A. Jana and K. Koumatos, 
	\newblock On the uniqueness of solutions to hyperbolic systems of conservation laws. 
	\newblock{\em J. Differential Equations} 291 (2021), 110--153.
	
	\bibitem{GJW}
	\newblock S. S. Ghoshal, A. Jana and E. Wiedemann,
	\newblock Weak-strong uniqueness for the isentropic Euler equations with possible vacuum. 
	\newblock{\em Partial Differ. Equ. Appl.} 3 (2022), no. 4, Paper No. 54, 21 pp.
	
	\bibitem{GJ-full-Euler}
	\newblock S. S. Ghoshal and A. Jana,
	\newblock Uniqueness of dissipative solutions to the complete Euler system. 
	\newblock{\em J. Math. Fluid Mech.} 23 (2021), no. 2, Paper No. 34, 25 pp. 
	
	\bibitem{GJ-NS}
	\newblock S. S. Ghoshal and A. Jana,
	\newblock On breakdown of H\"older continuous solutions to compressible Navier-Stokes equation. 
	\newblock{\em In preparation.}
	
	\bibitem{GK}
	\newblock V. Giri and H. Kwon, 
	\newblock On non-uniqueness of continuous entropy solutions to the isentropic compressible Euler equations. 
	\newblock{\em Arch. Ration. Mech. Anal.} 245 (2022), no. 2, 1213--1283.
	
	\bibitem{Glimm}
	\newblock J. Glimm,
	\newblock Solutions in the large for nonlinear hyperbolic systems of equations. 
	\newblock{\em Comm. Pure Appl. Math.} 18 (1965), 697--715.
	
	\bibitem{Grassin}
	\newblock M. Grassin,
	\newblock Global smooth solutions to Euler equations for a perfect gas. 
	\newblock{\em Indiana Univ. Math. J.} 47 (1998), no. 4, 1397--1432.
	
	\bibitem{GMS}
	\newblock P. Gwiazda, M. Mich\'alek, A. \'Swierczewska-Gwiazda,
	\newblock A note on weak solutions of conservation laws and energy/entropy conservation. 
	\newblock{\em Arch. Ration. Mech. Anal.} 229 (2018), no. 3, 1223--1238.
	
	\bibitem{Isett}
	\newblock P. Isett, 
	\newblock Nonuniqueness and existence of continuous, globally dissipative Euler flows.
	\newblock {\em Arch. Ration. Mech. Anal.} 244 (2022), no.3, 1223--1309.
	
	\bibitem{JWX}
	\newblock Q. Jiu, Y. Wang and Z. Xin, 
	\newblock Remarks on blow-up of smooth solutions to the compressible fluid with constant and degenerate viscosities.
	\newblock{\em J. Differential Equations} 259 (2015), no. 7, 2981--3003.
	
	\bibitem{John}
	\newblock F. John,
	\newblock Formation of singularities in one-dimensional nonlinear wave propagation.
	\newblock {\em Comm. Pure Appl. Math.} 27 (1974), 377--405.
	
	\bibitem{Kato}
	\newblock T. Kato,
	\newblock The Cauchy problem for quasi-linear symmetric hyperbolic systems.
	\newblock{\em Arch. Rational Mech. Anal.} 58 (1975), no. 3, 181--205.
	
	\bibitem{Lai-Schi}
	\newblock N.-A. Lai and N. M. Schiavone,
	\newblock Lifespan estimates for the compressible Euler equations with damping via Orlicz spaces techniques. 
	\newblock {\em J. Evol. Equ.} 23 (2023), no.4, Paper No. 65, 29 pp.
	
	
	\bibitem{Lax-1957}
	\newblock P. D. Lax, 
	\newblock Hyperbolic systems of conservation laws. II. 
	\newblock Comm. Pure Appl. Math. 10 (1957), 537--566.
	
	\bibitem{Lax-jmp}
	\newblock P. D. Lax,
	\newblock Development of singularities of solutions of nonlinear hyperbolic partial differential equations.
	\newblock{\em J. Mathematical Phys.} 5 (1964), 611--613.
	
	\bibitem{Lax:linear-alg}
	\newblock P. D. Lax,
	\newblock Linear algebra.
	\newblock Pure and Applied Mathematics (New York). A Wiley-Interscience Publication. John Wiley \& Sons, Inc., New York, 1997. xvi+250 pp.
	 
	\bibitem{Liu}
	\newblock T. P. Liu,
	\newblock Development of singularities in the nonlinear waves for quasilinear hyperbolic partial differential equations.
	\newblock{\em J. Differential Equations} 33 (1979), no. 1, 92--111.
	
	\bibitem{Li-Wang}
	\newblock H.-L. Li and Y. Wang,
	\newblock Formation of singularities of spherically symmetric solutions to the 3D compressible Euler equations and Euler-Poisson equations.
	\newblock{\em NoDEA Nonlinear Differential Equations Appl.} 25 (2018), no. 5, Paper No. 39, 15 pp.
	
	\bibitem{LS-Invent}
	\newblock  J. Luk and J. Speck,
	\newblock Shock formation in solutions to the 2D compressible Euler equations in the presence of non-zero vorticity.
	\newblock{\em Invent. Math.} 214 (2018), no. 1, 1--169. 
	



    \bibitem{Majda-existence}
    \newblock A. Majda,
    \newblock The existence of multidimensional shock fronts.
    \newblock{\em Mem. Amer. Math. Soc.} 43 (1983), no. 281, v+93 pp.
    
    
    \bibitem{Majda-stability}
    \newblock A. Majda,
    \newblock The stability of multidimensional shock fronts.
    \newblock{\em Mem. Amer. Math. Soc.} 41 (1983), no. 275, iv+95 pp.
    
    
    
	
	\bibitem{KMU}
	\newblock T. Makino, S. Ukai and S. Kawashima
	\newblock On compactly supported solutions of the compressible Euler equation. Recent topics in nonlinear PDE, III (Tokyo, 1986), 173--183, 
	\newblock North-Holland Math. Stud., 148, Lecture Notes Numer. Appl. Anal., 9, North-Holland, Amsterdam, 1987.
	
	
	\bibitem{MRRS-II}
	\newblock F. Merle, P. Rapha\"{e}l, I. Rodnianski and J. Szeftel,
	\newblock On the implosion of a compressible fluid I: Smooth self-similar inviscid profiles.
	\newblock{\em Ann. of Math.} (2) 196 (2022), no. 2, 567--778. 
	
%
	
	\bibitem{Serre-exp-vac}
	\newblock D. Serre,
	\newblock Expansion of a compressible gas in vacuum. 
	\newblock{\em Bull. Inst. Math. Acad. Sin. (N.S.)} 10 (2015), no. 4, 695--716.
	
	\bibitem{Serre}
	\newblock D. Serre,
	\newblock Solutions classiques globales des équations d'Euler pour un fluide parfait compressible. (French) [Classical global solutions of the Euler equations for a compressible perfect fluid]
	\newblock{\em Ann. Inst. Fourier (Grenoble)} 47 (1997), no. 1, 139--153.
	
	\bibitem{Shkoller-Sideris}
	\newblock S. Shkoller and T. C. Sideris,
	\newblock Global existence of near-affine solutions to the compressible Euler equations.
	\newblock{\em Arch. Ration. Mech. Anal.} 234 (2019), no. 1, 115--180.
	
	
	\bibitem{Sideris-cmp}
	\newblock T. C. Sideris,
	\newblock Formation of singularities in three-dimensional compressible fluids. 
	\newblock{\em Comm. Math. Phys.} 101 (1985), no. 4, 475--485.
	
	\bibitem{Sideris-hyp}
	\newblock T. C. Sideris,
	\newblock Formation of singularities in solutions to nonlinear hyperbolic equations.
	\newblock {\em Arch. Rational Mech. Anal.} 86 (1984), no. 4, 369--381.
	
%
	
	\bibitem{Sideris-ajm}
	\newblock T. C. Sideris,
	\newblock Delayed singularity formation in 2D compressible flow.
	\newblock{\em Amer. J. Math.} 119 (1997), no. 2, 371--422. 
	
	
	
	\bibitem{Suzuki}
	\newblock T. Suzuki, 
	\newblock Irrotational blowup of the solution to compressible Euler equation.
	\newblock{\em J. Math. Fluid Mech.} 15 (2013), no. 3, 617--633. 
	
	\bibitem{Xin}
	\newblock Z. Xin,
	\newblock Blowup of smooth solutions to the compressible Navier-Stokes equation with compact density.
	\newblock{\em Comm. Pure Appl. Math.} 51 (1998), no. 3, 229--240.
	
	\bibitem{Yuen}
	\newblock M. Yuen,
	\newblock Blowup for irrotational $C^1$ solutions of the compressible Euler equations in $\R^N$.
	\newblock{\em Nonlinear Anal.} 158 (2017), 132--141.
	
	
\end{thebibliography}
\end{document}